\documentclass[11pt]{article}
\usepackage[a4paper,margin=1.0in]{geometry}
\usepackage{parskip}
\usepackage[T1]{fontenc}
\usepackage[utf8]{inputenc}
\usepackage[english]{babel}
\usepackage{amsmath,amssymb,amsthm,mathtools}
\usepackage{mathrsfs} 
\usepackage{lmodern}
\usepackage{microtype}
\mathtoolsset{showonlyrefs}
\usepackage{euscript}
\usepackage[dvipsnames, svgnames]{xcolor} 
\usepackage[shortlabels]{enumitem} 
\usepackage{bm}
\usepackage[misc]{ifsym}
\usepackage[normalem]{ulem}
\usepackage{setspace}
\usepackage{caption}
\usepackage{subcaption}
\usepackage{graphicx}
\usepackage[numbers, sort&compress]{natbib}
\definecolor{AirForceBlue}{RGB}{93,138,168}
\usepackage[unicode=true]{hyperref}
\usepackage[numbered]{bookmark}
\hypersetup{pdfencoding=auto, psdextra}
\hypersetup{
        colorlinks=false,
        pdfborder={0 0 1},
        linkcolor=MediumBlue,
        citecolor=ForestGreen,
        urlcolor=AirForceBlue,
        linktoc=page,
        filecolor=AirForceBlue,
      }%

\newtheorem{theorem}{Theorem}[section]
\newtheorem{corollary}[theorem]{Corollary}
\newtheorem{lemma}[theorem]{Lemma}

\newtheorem{proposition}[theorem]{Proposition}

\newtheorem{assumption}[theorem]{Assumption}
\newtheorem{assumptions}[theorem]{Assumptions}

\theoremstyle{definition}
\newtheorem{definition}[theorem]{Definition}
\newtheorem{algorithm}[theorem]{Algorithm}
\newtheorem{remark}[theorem]{Remark}

\numberwithin{equation}{section}


\makeatletter
\if@titlepage%
\else
    \def\abstract@style{heading} 
    \def\abstract@size{\small}
    
    \newcommand{\setabstractstyle}[1]{%
      \def\abstract@style{#1}%
    }
    
    \newcommand{\setabstractsize}[1]{%
      \def\abstract@size{#1}%
    }
    
    \let\orig@abstract\abstract
    \let\endorig@abstract\endabstract
    
    \renewenvironment{abstract}{%
      \if@twocolumn
        \orig@abstract%
      \else
        \abstract@size%
        \edef\temp@a{inline}%
        \ifx\abstract@style\temp@a%
          \quotation
          \noindent\textbf{\abstractname.}\ %
          \ignorespaces
        \else
          \begin{center}%
          {\bfseries\abstractname\vspace{-.5em}\vspace{\z@}}%
          \end{center}%
          \quotation
          \ignorespaces
        \fi
      \fi
    }{%
      \if@twocolumn
        \endorig@abstract
      \else
        \endquotation 
      \fi
    }
\fi
\makeatother
\setabstractstyle{inline} 
%

\renewcommand{\proofname}{Proof}%
\makeatletter
\renewenvironment{proof}[1][\proofname]{%
  \par\pushQED{\qed}\normalfont
  \topsep6\p@\@plus6\p@\relax 
  \trivlist
  \item[\hskip\labelsep\bfseries #1\@addpunct{.}]\ignorespaces
}{%
  \popQED\endtrivlist\@endpefalse
}
\makeatother
\usepackage{tocloft}
\makeatletter
\newif\ifinappendixbookmark
\inappendixbookmarkfalse

\let\oldappendix\appendix
\renewcommand{\appendix}{%
  \oldappendix
  \inappendixbookmarktrue 
}
\makeatother

\newif\ifdisableappendixbookmark
\disableappendixbookmarkfalse

\makeatletter
\newcommand*{\AddAppendixPrefixInBookmarks}{%
  \ifinappendixbookmark
    \ifnum\bookmarkget{level}=1
      \ifdisableappendixbookmark
      \else
        \preto\bookmark@text{Appendix\space}%
      \fi
    \fi
  \fi
}
\bookmarksetup{
  addtohook=\AddAppendixPrefixInBookmarks,
}
\makeatother
\makeatletter
\renewcommand\Hy@numberline[1]{#1.\space}%
\makeatother

\makeatletter
\newif\if@appendixsec
\@appendixsecfalse
\apptocmd{\appendix}{%
  \@appendixsectrue
}{}{}

\renewcommand{\@seccntformat}[1]{%
  \if@appendixsec
    \ifstrequal{#1}{section}{%
      Appendix~\thesection.\hskip.5em
    }{%
      \csname the#1\endcsname.\hskip.5em
    }%
  \else
    \csname the#1\endcsname.\hskip.5em
  \fi
}
\makeatother
\makeatletter
\newif\ifinappendixtoc
\inappendixtocfalse

\let\oldappendixtoc\appendix
\renewcommand{\appendix}{%
  \oldappendixtoc
  \addtocontents{toc}{\protect\inappendixtoctrue}%
}
\makeatother

\makeatletter
\let\oldnumberline\numberline
\renewcommand{\numberline}[1]{%
  \ifinappendixtoc
    \oldnumberline{\vphantom{[]}#1.\hskip.5em}
  \else
    \oldnumberline{#1.\vphantom{[]}}%
  \fi
}
\makeatother

\makeatletter
\let\oldcontentsline\contentsline


\renewcommand{\contentsline}[4]{%
  \ifinappendixtoc
    \ifstrequal{#1}{section}{%
      \oldcontentsline{#1}{\vphantom{[]}Appendix~#2}{\vphantom{[]}#3}{#4}%
    }{%
      \oldcontentsline{#1}{\vphantom{[]}#2}{\vphantom{[]}#3}{#4}%
    }%
  \else
    \oldcontentsline{#1}{\vphantom{[]}#2}{\vphantom{[]}#3}{#4}%
  \fi
}
\makeatother
\makeatletter
\AtBeginDocument{%
  \pretocmd{\NAT@hyper@}{\vphantom{[]}}{}{}%
}
\makeatother

\let\oldbibliographystyle\bibliographystyle
\renewcommand{\bibliographystyle}[1]{%
    \addtocontents{toc}{\protect\inappendixtocfalse}
    \disableappendixbookmarktrue
    \oldbibliographystyle{#1}%
}

\AtEndDocument{
\addcontentsline{toc}{section}{\refname}
}

\let\LTXlabel\label
\makeatletter
\renewcommand{\footnote}[2][\empty]{%
  \nolinebreak%
  \refstepcounter{footnote}
  \global\edef\sfootnote@arabic{\arabic{footnote}}%
  \global\protected@edef\sfootnote@the{\thefootnote}%
  \ltx@ifpackageloaded{hyperref}{%
    \ifHy@hyperfootnotes%
      \refstepcounter{Hfootnote}
      \global\let\Hy@saved@currentHref\@currentHref%
      \hyper@makecurrent{Hfootnote}%
      \global\let\Hy@footnote@currentHref\@currentHref%
      \global\let\@currentHref\Hy@saved@currentHref%
    \fi%
  }{}%
  \xdef\sfootnote@opt{#1}%
  \ifx\sfootnote@opt\empty%
    \footnotetext{\LTXlabel{fnr:\sfootnote@arabic}#2}%
  \else%
    \ltx@ifpackageloaded{hyperref}{%
      \footnotetext[#1]{\phantomsection\LTXlabel{fnr:\sfootnote@arabic}#2\vphantom{Xg}}%
    }{%
      \footnotetext[#1]{\LTXlabel{fnr:\sfootnote@arabic}#2}%
    }%
  \fi%
  \ltx@ifpackageloaded{hyperref}{%
    \ifHy@hyperfootnotes%
      \hbox{\@textsuperscript{\,\normalfont\ref{fnr:\sfootnote@arabic}\vphantom{Xg}}}
    \else%
      \hbox{\@textsuperscript{\normalfont\ref*{fnr:\sfootnote@arabic}}}%
    \fi%
  }{%
    \hbox{\@textsuperscript{\normalfont\ref{fnr:\sfootnote@arabic}}}%
  }%
  \;%
  \ignorespaces%
}
\makeatother

\makeatletter
\newif\if@maketitle@nohyper
\let\orig@maketitle\maketitle
\renewcommand{\maketitle}{%
  \@maketitle@nohypertrue
  \orig@maketitle
  \@maketitle@nohyperfalse
}
\makeatother

\makeatletter
\renewcommand{\footnotemark}[1][]{%
  \leavevmode
  \ifhmode\edef\@x@sf{\the\spacefactor}\nobreak\fi
  %
  \def\@tempa{#1}%
  \ifx\@tempa\@empty
    \stepcounter{footnote}%
    \protected@xdef\@thefnmark{\thefootnote}%
  \else
    \begingroup
      \c@footnote #1\relax
      \unrestored@protected@xdef\@thefnmark{\thefootnote}%
    \endgroup
  \fi
  \ltx@ifpackageloaded{hyperref}{%
  \ifHy@hyperfootnotes
    \if@maketitle@nohyper
    \else
      \refstepcounter{Hfootnote}%
      \global\let\Hy@saved@currentHref\@currentHref
      \hyper@makecurrent{Hfootnote}%
      \global\let\Hy@footnote@currentHref\@currentHref
      \global\let\@currentHref\Hy@saved@currentHref
    \fi
  \fi
  }{}%
  %
  \textsuperscript{%
    \normalfont
    \ltx@ifpackageloaded{hyperref}{%
      \ifHy@hyperfootnotes
        \if@maketitle@nohyper
          \@thefnmark
        \else
          \,\hyper@linkstart{link}{\Hy@footnote@currentHref}%
          \@thefnmark\vphantom{Xg}%
          \hyper@linkend
        \fi
      \else
        \@thefnmark
      \fi
    }{%
       \@thefnmark
    }%
  }%
  \ifhmode\spacefactor\@x@sf\fi
  \;\ignorespaces
}
\makeatother

\renewcommand{\bm}{}

\providecommand{\norm}[1]{\lVert#1\rVert}
\providecommand{\scalarp}[1]{\langle#1\rangle}

\DeclareMathOperator*{\argmin}{arg\,min}

\DeclareMathOperator*{\dom}{dom}
\DeclareMathOperator*{\interior}{int}

\newcommand{\minimize}[2]{%
  \operatorname*{minimize}_{#1}\; #2%
}


\newcommand{\Fk}{\ensuremath{\mathfrak F}}
\newcommand{\EE}{\ensuremath{\mathsf E}}

\newcommand{\PP}{\ensuremath{\mathsf P}}
\newcommand{\R}{\ensuremath \mathbb{R}}
\newcommand{\HH}{\ensuremath \mathsf{H}}
\newcommand{\bxs}{\ensuremath \bm{\mathsf{x}}}
\newcommand{\bus}{\ensuremath \bm{\mathsf{u}}}
\newcommand{\bzs}{\ensuremath \bm{\mathsf{z}}}

\newcommand{\br}{\ensuremath \bm{{r}}}

\newcommand{\xx}{\ensuremath \mathsf{x}}

\newcommand{\N}{\ensuremath \mathbb{N}}

\newcommand{\gs}{\ensuremath{{\mathsf{g}}}}

\newcommand{\bys}{\ensuremath \bm{\mathsf{y}}}

\newcommand{\bx}{\ensuremath \bm{x}}
\newcommand{\by}{\ensuremath \bm{y}}
\newcommand{\bv}{\ensuremath \bm{v}}
\newcommand{\bw}{\ensuremath \bm{w}}
\newcommand{\bvs}{\ensuremath \bm{\mathsf{v}}}

\newcommand{\be}{\ensuremath \bm{e}}
\newcommand{\bg}{\ensuremath \bm{g}}
\newcommand{\bu}{\ensuremath \bm{u}}
\newcommand{\bz}{\ensuremath \bm{z}}

\newcommand{\bgs}{\ensuremath{{\gs}}}

\newcommand{\bvarphi}{\ensuremath{{\varphi}}}

\newcommand{\bbar}{\bm \bar}

\newcommand{\bphi}{\ensuremath \bm{\phi}}
\newcommand{\alpakf}{\ensuremath \alpha_k^2 A_k}
\newcommand{\alpakff}{\ensuremath \alpha_k A_k}
\newcommand{\alpaks}{\ensuremath \alpha_k^2 B_k}
\newcommand{\ck}{\ensuremath C_k}

\renewcommand{\refname}{References}
\newcommand{\HHs}{\mathcal{C}}

\newcommand{\myref}[1]{%
  \mbox{\ref{#1}\vphantom{()}}%
}
\makeatletter
\DeclareRobustCommand{\myhyperref}[3]{%
  \mbox{%
    \hyperref[#1]{%
      \ref*{#2}\,\textup{\tagform@{\ref*{#3}}}\,%
    }\vphantom{()}%
  }%
}
\makeatother
\DeclareRobustCommand{\myhyperrefrange}[3]{%
  \mbox{%
    (\hyperref[#1]{%
      \ref*{#2}\,--\,\ref*{#3}%
    })\vphantom{()}%
  }%
}
\makeatletter
\DeclareRobustCommand{\myeqhyperrefrange}[3]{%
  \mbox{%
    \hyperref[#1]{%
      \,\textup{\tagform@{\ref*{#2}}}\,--\,\textup{\tagform@{\ref*{#3}}}\,%
    }\vphantom{()}%
  }%
}
\makeatother

\newcommand{\myEqref}[1]{\mbox{\hyperref[#1]{(\ref*{#1})}\vphantom{()}}}
\makeatletter
\NewDocumentCommand{\runinsectionstar}{o m}{%
  \IfNoValueTF{#1}{%
    \def\runin@size{\Large}%
  }{%
    \def\runin@size{#1}%
    \if\relax\detokenize{#1}\relax
      \def\runin@size{\Large}%
    \fi
  }%
  \addcontentsline{toc}{section}{#2}%
  \@startsection{section}%
    {1}%
    {\z@}%
    {-3.5ex \@plus -1ex \@minus -.2ex}%
    {-1em}%
    {\normalfont\runin@size\bfseries}*%
    {#2.}%
}
\makeatother
\makeatletter
\NewDocumentCommand{\runinsubsectionstar}{o m}{%
  \IfNoValueTF{#1}{%
    \def\runin@size{\large}%
  }{%
    \def\runin@size{#1}%
    \if\relax\detokenize{#1}\relax
      \def\runin@size{\large}%
    \fi
  }%
  \addcontentsline{toc}{section}{#2}%
  \@startsection{subsection}%
    {2}%
    {\z@}%
    {-3.25ex \@plus -1ex \@minus -.2ex}%
    {-1em}%
    {\normalfont\runin@size\bfseries}*%
    {#2.}%
}
\makeatother

\title{Bregman Stochastic Proximal Point Algorithm with Variance Reduction}
\author{
Cheik Traor\'e$\,^{\textrm{\Letter}}\,$\thanks{Corresponding author: Toulouse School of Economics, Toulouse Capitole University, Toulouse, France (\texttt{cheik.traore@tse-fr.eu}).}
\and
Peter Ochs\thanks{Department of Mathematics and Computer Science, Saarland University, Saarbr\"ucken, Germany (\texttt{ochs@cs.uni-saarland.de}).}
}

\date{}

\begin{document}
\maketitle
\begin{abstract}
    Stochastic algorithms, in particular stochastic gradient descent (SGD), have become the methods of choice in data science and machine learning. More recently, the stochastic proximal point algorithm (SPPA) has been introduced and shown to be more robust than SGD with respect to stepsize selection. However, SPPA still suffers from reduced convergence rates due to the need for vanishing stepsizes, a limitation that can be alleviated by variance reduction techniques. In the deterministic setting, many optimization problems can be solved more efficiently by exploiting non-Euclidean geometries through the use of Bregman distances. In this work, we bridge these two lines of research and propose variance reduction techniques for the Bregman stochastic proximal point algorithm (BSPPA). As special cases, our framework yields SAGA- and SVRG-type variance reduction methods for BSPPA. Our theoretical analysis and numerical experiments demonstrate improved stability and faster convergence rates compared to the vanilla BSPPA, both with constant and vanishing stepsizes, respectively. Moreover, our analysis also encompasses the SGD setting, allowing us to recover the same variance reduction techniques for Bregman SGD within a unified framework.
\end{abstract}

\vspace{1ex}
 \noindent
 {\bf\small Key words and phrases.} {\small Stochastic proximal point algorithm, stochastic gradient descent, variance reduction, Bregman distance, convex optimization, smooth optimization.}\\[1ex]
 \noindent
 {\bf\small 2020 Mathematics Subject Classification.} {\small 90C15, 90C25, 90C06, 65K05.}

\section{{Introduction}}

The objective of the paper is to solve the following finite-sum optimization problem
\begin{align}
  \label{prob:main-intro}
    \minimize{ \bxs \in \HH}{ \frac{1}{n}\sum_{i = 1}^{n}f_i(\bxs)}, 
\end{align}
where
$\HH \subset \mathbb{R}^d$ is closed and convex with nonempty interior, and
$f_i\colon \mathbb{R}^d \to \R \cup \{+\infty\} $ is convex, for $i \in \{1, \ldots, n \}$.

The prime example of Problem~\eqref{prob:main-intro} is the Empirical Risk Minimization (ERM) problem in machine learning \citep[Section 2.2]{shalev2014understanding}. In that setting, $n$ is the number of data points, $\bxs\in\mathbb{R}^d$ includes the parameters of a machine learning model (linear functions, neural networks, etc.), and the function $f_i$ is the loss of the model $\bxs$ at the $i$-th data point. To efficiently solve Problem~\eqref{prob:main-intro}, stochastic approximation \citep{robbins1951stochastic} is leveraged, in particular stochastic gradient descent (SGD) and its variants \citep{duchi2011adaptive, kingma2014adam}.

\textbf{Stochastic Proximal Point Algorithm.} In recent years, the stochastic proximal point algorithm (SPPA) \citep{ryu2014stochastic,asi2019stochastic, kim2022convergence, bertsekas2011incremental,patrascu2017nonasymptotic,toulis2016towards, toulis2017asymptotic, toulis2015proximal} has emerged as a good alternative to SGD, demonstrating greater robustness to stepsize selection. Instead of the gradient $\nabla f_i$, the proximity operator of $f_i$, chosen randomly, is used at each iteration. 
\citet{davis2018stochastic} studied a Bregman distance version of SPPA (BSPPA) to account for a non-Euclidean geometry that can be adapted to the problem. While the convergence rates of both SPPA and BSPPA are of the same orders as those of SGD, in the deterministic setting, well adapted Bregman based algorithms can improve the constants in the rates significantly (e.g. from linear to logarithmic dependence on the problem dimension \citep{NY83}). We trace this dis-balance between the stochastic and deterministic setting back to the variance of the stochastic estimator of the true proximity operator that requires vanishing stepsize for the algorithm to converge. 

{\textbf{Variance-reduced Stochastic Proximal Point Algorithms} have only recently emerged  \citep{traore2024variance, khaled2022faster, milzarek2022semismooth, richtarik2025a}. As in this paper, all existing convergence results are provided in the smooth (differentiable) case, except for Point-SAGA proposed by \citet{defazio2016simple} and its generalization SMPM proposed by \citet{condat2025stochastic}, where sublinear convergence rates ($\mathcal{O}(1/k)$ and $\mathcal{O}(1/k^2)$ respectively) with a constant stepsize are provided for strongly convex functions.} However, to the best of our knowledge, no existing result has been established in the Bregman setting. The latter can be better adapted to constrained optimization problems or to cases where the Euclidean distance fails to adequately capture the underlying properties.

\textbf{Contributions.} In this paper we propose a Bregman Stochastic Proximal Point Algorithm (BSPPA) with generic variance reduction. We provide improved convergence rates as compared with vanilla BSPPA without variance reduction, in particular, sublinear and linear rates for convex or relatively strongly convex functions, respectively. Then, from the generic results, convergence rates for BSAPA, BSVRP, and L-BSVRP, which are proximal and Bregman distance versions of SAGA \citep{defazio2014saga}, SVRG \citep{johnson2013accelerating}, and L-SVRG \citep{kovalev2020don} respectively. These variance-reduced algorithms are all new in the literature. We can also recover rates for vanilla BSPPA for potentially nonsmooth functions. Of course, the rates of the variance-reduced versions (BSAPA, BSVRP, L-BSVRP) are given without the need for vanishing stepsizes, and are faster than that of BSPPA, which are the intended objectives of variance reduction. As a by-product of our study, we provide a unified variance reduction analysis for Bregman SGD. Our analysis can thus recover not only the Bregman version of SAGA and L-SVRG (present in \citet{pmlr-v139-dragomir21a}), but also Bregman SVRG and the Bregman version of the unified study in \citet{gorbunov2020unified}, dealing with several variants of (variance-reduced) Euclidean SGD.

\section{{Preliminaries}}

\subsection{Notation}
Let $\mathcal{C} \subset \mathbb{R}^d$ be convex with nonempty interior. We denote interior of $\mathcal{C}$ by $\interior\mathcal{C}$, its closure by $\bbar{\mathcal{C}}$ and its boundary by \(\text{bd}\, \mathcal{C}\).
For every integer $\ell \geq 1$, we define $[\ell] := \{1, \dots, \ell\}$.  
Bold default font is used for random variables taking values in $\R^d$, while bold sans serif font is used for their realizations or deterministic variables in $\R^d$. The probability space underlying random variables is denoted by $(\Omega, \mathfrak{A}, \PP)$. For every random variable $\bx$, $\EE[\bx]$ denotes its expectation, while if $\Fk \subset \mathfrak{A}$ is a sub $\sigma$-algebra we denote by $\EE[\bx\,\vert\, \Fk]$ the conditional expectation of $\bx$ given $\Fk$. Also, $\sigma(\by)$ represents the $\sigma$-algebra generated by the random variable $\by$. For a function $\bvarphi\colon \R^d \to \R \cup \{+\infty\}$, define $\dom \bvarphi \coloneqq \left\{\bxs \in \R^d: \bvarphi(\bxs) < +\infty\right\}$. $\bvarphi$ is proper if $\dom \varphi \neq \varnothing$. The set of minimizers of $\bvarphi$ is $\argmin \bvarphi 
= \{\bxs \in \R^d : \bvarphi(\bxs) = \inf \bvarphi\}$. If $\inf \bvarphi$ is finite, it is represented by $\bvarphi_*$. 
We denote the subdifferential of $\varphi$ at $\bxs$ as $\partial\varphi(\bxs)\coloneqq \left\{\bgs \in \R^d: \bvarphi(\bys) - \bvarphi(\bxs) \geq \langle \bgs, \bys - \bxs \rangle\;(\forall\,\bys \in \R^d)\right\}$. When $\bvarphi$ is differentiable $\nabla \bvarphi$ denotes the gradient of $\bvarphi$. \(\bvarphi\) is said \(L\)-smooth if its gradient is \(L\)-Lipschitz continuous.
The (convex) conjugate of $\bvarphi$ is the function $\bvarphi^* \colon \R^d \to [-\infty, + \infty]$ defined by $ \bys \mapsto \sup_{\bxs \in \R^d} \langle \bys, \bxs \rangle - \bvarphi(\bxs).$ 
In this work, $\ell^1$ represents the space of sequences with summable norms and $\ell^2$ 
with summable squared norms. We set \(\R^d_{+} = \{\bxs \in \R^d: \xx_i \geq 0, i
\in [d]\}\) and \(\R^d_{++} = \{\bxs \in \R^d: \xx_i > 0, i
\in [d]\}\).
\subsection{Bregman distance}
The Bregman distance $D_h(\bxs, \bys)$, between $\bxs \in \mathcal{C}, \bys \in \operatorname{int} \mathcal{C}$, is defined as $ D_h(\bxs, \bys) \coloneqq h(\bxs) - h(\bys) - \langle \nabla h(\bys), \bxs - \bys \rangle$,
where $h \colon \R^d \to \R \cup \{+\infty\}$, often called kernel, is a strictly convex and twice continuously differentiable function on $\operatorname{int} \mathcal{C}$, with $\dom h = \mathcal{C}$ and $\displaystyle \lim_{\bxs \in \interior \mathcal{C}, \bxs \to \text{bd}\,C} \|\nabla h(x)\| = +\infty$. In what follows, $h$ will always refer to a function with those properties, unless stated otherwise. A classical example is the usual squared Euclidean distance $D_h(\bxs, \bys) = \frac{1}{2}\|\bxs - \bys \|_2^2$ when $h=\frac{1}{2}\|\cdot\|_2^2$. The Bregman distance often captures the underlying geometry of an optimization problem more effectively than the Euclidean distance. It can be better suited to the constraints set or to the objective function properties. This makes it a powerful tool in optimization. For instance, the Kullback--Leibler divergence on $\mathbb{R}^d$ does not have a global Lipschitz continuous gradient, however it is relatively smooth w.r.t. the kernel \(h(\bxs) = - \sum_i \log \xx_i\); see \citet{bauschke2017descent}. 
Unfortunately, the Bregman distance is not necessarily symmetric, homogeneous nor translation invariant. The latter two shortcomings are the reasons why the following assumption is required throughout the paper.
\begin{assumption}\label{ass:a14}
    Let $\bxs, \bys \in \interior \dom h^*$, 
    $\lambda \in \R$, 
    and $ \bzs \in \R^d$ such that $\bxs + \lambda \bzs, \bys + \bzs\in \dom h^*$. 
    There exists a positive gain function $G$ such that
    \begin{equation}\label{eq:gain}
        D_{h^*}(\bxs + \lambda \bzs, \bxs) \leq G(\bxs, \bys, \bzs ) \lambda^2 D_{h^*}(\bys + \bzs, \bys).
    \end{equation}
\end{assumption}
According to \citet{pmlr-v139-dragomir21a}, this assumption seems unavoidable when using past iterates in an algorithm with Bregman distance. This is also the case of accelerated methods where similar assumption is made; see \citet{hanzely2021accelerated}. The two papers provide extensive discussion and numerous examples regarding this assumption. Below, we present just one possible instance.
\begin{proposition} \emph{\citep[Proposition 1]{pmlr-v139-dragomir21a}.} If $h$ is $L$-smooth and the Hessian $\nabla^2 h^*$ is $M$-smooth, 
then the gain function can be chosen as:
\begin{equation*}
    G(\bxs,\bys,\bvs) = 1 + 2 M L \bigl( \|\bys - \bxs\| + \|\bvs\| \bigr).
\end{equation*}
\end{proposition}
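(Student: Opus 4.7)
The plan is to reduce the desired inequality to a pointwise comparison of Hessian quadratic forms via the integral form of Taylor's theorem. Since $h$ is strictly convex and $L$-smooth on $\mathcal{C}$, its conjugate $h^*$ is twice continuously differentiable on $\interior\dom h^*$, and expanding $h^*$ to second order around $\bxs$ and $\bys$ respectively yields
\begin{align}
D_{h^*}(\bxs + \lambda\bzs, \bxs) &= \lambda^2 \int_0^1 (1-t)\,\langle \nabla^2 h^*(\bxs + t\lambda\bzs)\bzs,\,\bzs\rangle\,dt,\\
D_{h^*}(\bys + \bzs, \bys) &= \int_0^1 (1-t)\,\langle \nabla^2 h^*(\bys + t\bzs)\bzs,\,\bzs\rangle\,dt.
\end{align}
After cancelling the explicit factor $\lambda^2$, it suffices to bound the first integrand by $G(\bxs,\bys,\cdot)$ times the second for each $t \in [0,1]$, and then integrate.

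Next, I would introduce the second Hessian as a reference via the identity
\[
\langle\nabla^2 h^*(\bxs+t\lambda\bzs)\bzs,\bzs\rangle = \langle\nabla^2 h^*(\bys+t\bzs)\bzs,\bzs\rangle + \langle(\nabla^2 h^*(\bxs+t\lambda\bzs)-\nabla^2 h^*(\bys+t\bzs))\bzs,\bzs\rangle,
\]
and control the Hessian-difference term by $M\,\|\bxs + t\lambda\bzs - (\bys + t\bzs)\|\,\|\bzs\|^2$ using the $M$-Lipschitz continuity of $\nabla^2 h^*$. A triangle inequality applied to the displacement produces the additive structure $\|\bys-\bxs\| + \|\bvs\|$ that appears in the target gain, where $\bvs$ is the effective step appearing in the Bregman distance on the right-hand side.

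To absorb the leftover factor $\|\bzs\|^2$ into a multiple of the reference quadratic form $\langle\nabla^2 h^*(\bys+t\bzs)\bzs,\bzs\rangle$, I would exploit the duality that $L$-smoothness of $h$ is equivalent to $(1/L)$-strong convexity of $h^*$: this yields $\nabla^2 h^*(\bys+t\bzs) \succeq (1/L)\Id$, and hence $\|\bzs\|^2 \leq L\,\langle\nabla^2 h^*(\bys+t\bzs)\bzs,\bzs\rangle$. Substituting this bound into the previous display and integrating against the weight $(1-t)\,dt$ then produces the claimed gain $G(\bxs,\bys,\bvs) = 1 + 2ML(\|\bys-\bxs\|+\|\bvs\|)$.

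The main technical hurdle is tracking the displacement $\bxs - \bys + t(\lambda-1)\bzs$ cleanly so that the gain can be expressed purely in terms of $\bxs$, $\bys$ and the effective step $\bvs$ while still hitting the precise constant $2ML$. This requires a careful split of the displacement, using the $L$-smoothness$\,\Leftrightarrow\,(1/L)$-strong convexity equivalence to absorb every leftover $\|\bzs\|^2$ into the Hessian quadratic form on the right-hand side, rather than letting it accumulate as a higher-order remainder.
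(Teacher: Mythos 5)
Your route is the standard one behind the cited result, and the paper itself does not reprove this proposition (it simply imports it from the reference), so there is no in-paper argument to compare against: writing both Bregman distances of $h^*$ in Taylor integral form, comparing the Hessians along the two segments via the $M$-Lipschitz continuity of $\nabla^2 h^*$, and absorbing the leftover $\|\bzs\|^2$ through $\nabla^2 h^*\succeq (1/L)\Id$ (dual to $L$-smoothness of $h$) is exactly how this gain function is obtained.

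The one genuine issue is the step you yourself flag as the ``main technical hurdle'' and then leave unresolved. The displacement between the two Hessian arguments at parameter $t$ is $\bxs-\bys+t(\lambda-1)\bzs$, and bounding it by $\|\bys-\bxs\|+\|\bvs\|$ (or by $\|\bys-\bxs\|+2\|\bvs\|$, spending the factor $2$ in $2ML$) requires $t\lvert\lambda-1\rvert\leq 1$ (respectively $\leq 2$), i.e.\ a restriction such as $\lambda\in[0,1]$; with that restriction your computation closes and in fact yields the sharper gain $1+ML(\|\bys-\bxs\|+\|\bvs\|)$. Without it the claim is simply false as a statement about all $\lambda\in\R$, which is how Assumption~\ref{ass:a14} is phrased in this paper: in dimension one take $h^*(u)=u^2/2+\lvert u\rvert^3/6$, so $L=M=1$, and $\bxs=\bys=0$, $\bzs=1$; then $D_{h^*}(\lambda,0)=\lambda^2/2+\lvert\lambda\rvert^3/6$ grows cubically in $\lambda$, while $\lambda^2\,G(\bxs,\bys,\bzs)\,D_{h^*}(1,0)=2\lambda^2$ grows only quadratically, so no $\lambda$-independent gain of the claimed form can work. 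So your proof is correct precisely in the regime in which the assumption is actually invoked in this paper (there $\lambda=2\alpha_k L<1$ by the stepsize bounds, matching the bounded-$\lambda$ formulation of the original reference), and the restriction on $\lambda$ should be stated explicitly rather than left implicit in the triangle-inequality step.
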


To address the symmetry issue, as \citet{bauschke2017descent}, we will make use of the following symmetry coefficient. It will only be needed in the relatively strongly convex cases.
\begin{definition}
    Given a kernel $h\colon \R^d \to \R \cup \{+\infty\}$, its symmetry coefficient is defined by
 \begin{equation*}
     \gamma_h := \inf \Bigg\{ \frac{D_h(\bxs, \bys)}{D_h(\bys, \bxs)} \colon (\bxs, \bys) \in (\operatorname{int}\mathcal{C})^2, \bxs \neq \bys \Bigg\} \in [0, 1].
 \end{equation*}
\end{definition}
\begin{remark}
\begin{enumerate}
    \item From \citet[Theorem 3.7]{bauschke1997legendre}, we know that, for all $(\bxs, \bys)\in(\operatorname{int}\,\mathcal{C})^{2}$, $D_h(\bxs, \bys) = D_{h^*}(\nabla h(\bys), \nabla h(\bxs))$ . Hence, $\gamma_h = \gamma_{h^*}$.
    \item By definition, $(\forall \bxs \in \interior\mathcal{C})\;(\forall \bys \in \interior\mathcal{C}),$
    \begin{align}
    \gamma_h D_{h}(\bys, \bxs)\leq D_{h}(\bxs, \bys)\leq\gamma_h^{-1}D_{h}(\bys, \bxs),
    \end{align}
    where it is agreed that $0^{-1} =+\infty$ and $+\infty\times r = +\infty$ for all $r\geq 0$.
\end{enumerate}
\end{remark}
Next, we present a handy identity for Bregman distances.
\begin{lemma}[Three points identity \citep{chen1993convergence}]\label{lem:20250424a} Let $h\colon \R^d \rightarrow \R \cup \{+\infty\}$ be a proper lower semi-continuous convex function. For any $\bxs \in \dom h$, and $\bys, \bzs \in \interior\dom h$ the following identity holds:
\begin{equation*}
    D_h(\bxs, \bzs) - D_h(\bxs, \bys) - D_h(\bys, \bzs) = \langle \nabla h(\bys) - \nabla h(\bzs), \bxs - \bys \rangle.
\end{equation*}
\end{lemma}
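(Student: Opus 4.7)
The statement is a purely algebraic identity, so my plan is to just substitute the definition of the Bregman distance into each of the three terms on the left-hand side and observe that things collapse. Specifically, using $D_h(\bus, \bvs) = h(\bus) - h(\bvs) - \langle \nabla h(\bvs), \bus - \bvs \rangle$, I would write
\begin{align}
D_h(\bxs, \bzs) &= h(\bxs) - h(\bzs) - \langle \nabla h(\bzs), \bxs - \bzs \rangle, \\
D_h(\bxs, \bys) &= h(\bxs) - h(\bys) - \langle \nabla h(\bys), \bxs - \bys \rangle, \\
D_h(\bys, \bzs) &= h(\bys) - h(\bzs) - \langle \nabla h(\bzs), \bys - \bzs \rangle.
\end{align}
Subtracting the last two from the first, the six $h(\bxs), h(\bys), h(\bzs)$ values cancel in pairs, leaving only inner product terms.

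In the second step I would collect those inner products. The two terms involving $\nabla h(\bzs)$ combine into $\langle \nabla h(\bzs), -(\bxs - \bzs) + (\bys - \bzs) \rangle = \langle \nabla h(\bzs), \bys - \bxs \rangle$, and the $\nabla h(\bys)$ term contributes $\langle \nabla h(\bys), \bxs - \bys \rangle$. Adding these gives $\langle \nabla h(\bys) - \nabla h(\bzs), \bxs - \bys \rangle$, which is exactly the right-hand side. The only thing to justify is that the expressions $\nabla h(\bys)$ and $\nabla h(\bzs)$ are well defined, which follows from the assumption $\bys, \bzs \in \interior \dom h$ together with the fact that a proper lower semicontinuous convex function is differentiable on the interior of its domain whenever the gradient exists there (in this context $h$ is also assumed twice continuously differentiable on $\interior \mathcal{C}$, so no subtlety arises).

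There is essentially no obstacle here: the lemma is a one-line expansion and no convexity property of $h$ beyond the existence of $\nabla h(\bys)$ and $\nabla h(\bzs)$ is actually used. The only care needed is to track signs correctly when expanding the three Bregman distances; once that is done the identity is immediate.
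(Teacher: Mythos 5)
Your proposal is correct and is the standard one-line expansion of the three Bregman distances; the paper itself gives no proof (it cites \citet{chen1993convergence}), and the cited argument is exactly this cancellation of the $h$-values followed by regrouping the inner products. The only small caveat is that differentiability at $\bys,\bzs$ does not follow from properness, lower semicontinuity and convexity alone, but it is guaranteed here by the paper's standing assumption that the kernel $h$ is (twice) continuously differentiable on $\interior\dom h$, as you note.
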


\section{{Problem setting}}

First, we (re)introduce the finite-sum problem and collect all assumptions that we need for this paper, while not all of them will be required to hold at the same time. Section~\myref{sect:algo} presents the proposed algorithm for solving such problems:
\begin{align}
  \label{prob:main}
    \minimize{ \bxs \in \HH}{F(\bxs)},\quad  F(\bxs) := \frac{1}{n}\sum_{i = 1}^{n}f_i(\bxs)\,,
\end{align}
where  $\HH \coloneqq \bbar{\HHs}$ is the closure of $\HHs$ and
$f_i\colon \mathbb{R}^d \to \R \cup \{+\infty\} $ is proper, convex and lower semicontinuous (lsc), $i \in \{1,2, \ldots, n \}$.
\begin{assumptions}\label{ass:funtion}
\begin{enumerate}[label=\rm(A.\roman*),ref={\rm A.\roman*},leftmargin=*]
    \item \label{ass:a1} $\mathcal{C} \subset \dom F$ and $\bxs_* \in \mathcal{C}$ solves \eqref{prob:main}.
    \item \label{ass:a11} For all $i \in [n]$, $f_i$ is $\beta$-relatively strongly convex w.r.t. $h$, with $\beta \geq 0$, i.e.,
    \begin{equation}\label{PL condition}
        (\forall \bxs, \bys \in \interior\mathcal{C}) \quad \beta D_{_h} (\bxs, \bys) \leq  D_{f_i} (\bxs, \bys).
    \end{equation}
    It includes the standing convexity assumption of each $f_i$, i.e. $\beta = 0$.
     \item \label{ass:a13} For all $i \in [n]$, $f_i$ is $L$-relatively smooth w.r.t. to $h$, i.e., differentiable on $\interior \mathcal{C}$ and
    \begin{equation*}
        (\forall \bxs, \bys \in \interior\mathcal{C}) \quad D_{f_i} (\bxs, \bys) \leq L D_{h} (\bxs, \bys),
    \end{equation*}
    for some $L>0$.
    As a consequence, $F$ is $L$-relatively smooth.
    \item \label{ass:a12} $F$ is $\mu$-relatively strongly convex w.r.t. $h$, with $\mu > 0$.
    \item \label{ass:intmin}
    $\bxs_* \in \interior \mathcal{C}$ such that $\nabla F(\bxs_*) = 0$ and $h^*$ is full domain.
\end{enumerate}
\end{assumptions}
Assumption~\eqref{ass:a1} is always supposed true in this paper. 
Assumptions~\eqref{ass:a11} and~\eqref{ass:a12} extend the Euclidean notions of strong convexity and $L$-smoothness (i.e., $L$-Lipschitz continuity of the gradient), respectively, to the Bregman setting. Assumption~\eqref{ass:a11}, with $\beta > 0$, is only required when establishing a linear convergence rate (to a neighborhood of the optimum) for the vanilla BSPPA algorithm.
Similarly, \eqref{ass:a12}, which corresponds to the relative strong convexity of $F$, will be used to derive linear convergence rates to the exact minimum for the variance-reduced variants of BSPPA; in that case, only convexity of each $f_i$ will be assumed. Finally, \eqref{ass:a12} and~\eqref{ass:a11} with $\beta > 0$ are not required to obtain the sublinear convergence rates. Regarding Assumption \eqref{ass:intmin}, exactly like \citep{pmlr-v139-dragomir21a}, our analysis can not deal with the case where \(\bxs_* \in \text{bd}\,\mathcal{C}\). It would be a valuable extension to our work. 
\begin{section}{{BSPPA with generic variance reduction}}\label{sect:algo}
We propose to solve Problem~\eqref{prob:main} using the following Bregman stochastic proximal point algorithm (BSPPA) with a generic variance reduction satisfying the abstract conditions in Assumptions~\myref{ass:variance} further below. 
\begin{algorithm}
\label{algo:unified}
Let $(\be_k)_{k \in \N}$ be a sequence of random vectors in $\R^d$ and let $(i_k)_{k \in \N}$ be a sequence of i.i.d. random variables uniformly distributed on $\{1,\dots, n\}$, so that $i_k$ is independent of $\be_0$, \dots, $\be_{k-1}$. Let $\alpha_k >0$ and set $\bx_0 \equiv \bxs_0 \in \interior\mathcal{C}$. Then define, 
\begin{equation*}
\begin{array}{l}
\text{for}\;k=0,1,\ldots\\
\left\lfloor
\begin{array}{rl}
\bx_{k+1} &= \argmin_{\bxs \in \HH}\Big\{f_{i_{k}}(\bxs)-\scalarp{\be_k,\bxs-\bx_{k}} 
+\frac{1}{\alpha_k}D_{h}(\bxs,\bx_{k})\Big\}.
\end{array}
\right.
\end{array}
\end{equation*}
\end{algorithm}
We assume in all this work that this minimization has a unique solution in $\interior \mathcal{C}$. 
Compared to BSPPA, we have an additional linear perturbation that contains $\be_k$. It is the generic variance reduction term. As we shall see in Section~\myref{sect:applications}, depending on the specific algorithms, $\be_k$ may be defined in various ways. When $\be_k$ is set to $0$, BSPPA is recovered.
Using the optimality condition, the update at iteration $k$ can be rewritten as:
\begin{align}\label{eq:20220915c}
   \bx_{k+1} = \nabla h^* \Big(\nabla h (\bx_{k}) - \alpha_k\underbrace{(\bg_{k+1} - \be_k )}_{=: \bw_k} \Big),
\end{align}
where $\bg_{k+1} \in \partial f_{i_{k}}(\bx_{k+1})$ is such that~\eqref{eq:20220915c} holds. Since $\bw_k$ depends on $\bg_{k+1}$,  Equation~\eqref{eq:20220915c} shows that Algorithm~\myref{algo:unified} is an implicit algorithm. This is in contrast to an explicit update 
\begin{equation} \label {eq:10022025a}
  \bz_{k+1} = \nabla h^*\left( \nabla h (\bx_k) - \alpha_k \bv_k \right)
\end{equation}
in which $\bw_k$ is replaced by
\begin{equation}\label{eq:10022025c}
    \bv_k \coloneqq \bg_{k} - \be_k
\end{equation}
for a suitable $\bg_{k} \in \partial f_{i_{k}}(\bx_k)$. In minimization form, this explicit update reads
\begin{equation}\label{eq:sgdvr}
    \bz_{k+1} = \argmin_{\bxs \in \HH}\; \langle \bv_k, \bxs - \bx_k \rangle + \frac{1}{\alpha_k}D_{h}(\bxs,\bx_{k}),
\end{equation}
and is assumed well-posed.
\begin{remark}
The virtual explicit sequences $(\bv_{k})_{k\in\N}$ and $(\bz_{k})_{k\in\N}$ introduced in~\eqref{eq:10022025a} and~\eqref{eq:10022025c} are crucial to our analysis. They will be used to prove the main proposition even though they do not appear in Algorithm~\myref{algo:unified}.
\end{remark}
\begin{remark}[\textbf{Bregman variance-reduced SGD also covered}]
    The integration of these explicit iterates is the reason why our analysis extends seamlessly to the Bregman SGD case, providing a unified variance reduction study for Bregman SGD. All the variance reduction results will stand true for SGD with Bregman distance; see Remark~\myref{rmk:sgd} and Proposition~\myref{prop:unifiedexplicit}.
\end{remark}
The next assumptions that we consider concern the noise and variance of Algorithm~\myref{algo:unified}. The virtual explicit sequence $(\bz_{k})_{k\in\N}$ appears in those assumptions for the sake of analysis. This allows, at the same time, the application of the analysis and the results to the Bregman SGD case.
\begin{remark}\label{rk:assumproof}
    These assumptions on the noise and variance of the generic Algorithm~\myref{algo:unified} are ``assumed'' for the general analysis. But when algorithms are specified in Section~\myref{sect:applications} by defining $\be_k$, they will be proved by corresponding lemmas. They read as follows.
\end{remark}
\begin{assumptions}\label{ass:variance}
There exist non-negative sequences of real numbers $(A_k)_{k \in \N}, (B_k)_{k \in \N}, (C_k)_{k \in \N}$, $\rho \in [0,1]$, and a sequence of real-valued random variables $(N_k)_{k \in \N}$ such that, for every $k \in \N$,
\begin{enumerate}[label=\rm(B.\roman*),ref={\rm B.\roman*},leftmargin=*]
\item \label{ass:b1} $\EE[\be_k\,\vert\, \Fk_k] = 0$ and $\EE[\bg_k\,\vert\, \Fk_k] \in \partial F(\bx_k)$,
     \item \label{ass:b2} $ \EE[D_{h}(\bx_{k},\bz_{k+1}) \,\vert\, \Fk_k] \leq \alpakf \left (F(\bx_k) - F(\bxs_*)  \right) +\alpaks \sigma_k^2 + \alpha_k^2 N_{k}$,
    \item \label{ass:b3} 
    $\EE[\sigma_{k+1}^2] \leq (1-\rho) \EE\left[\sigma_k^2\right] + \ck\EE[F(\bx_k) - F(\bxs_*)]$,
\end{enumerate}
where $\sigma_k$ is a real-valued random variable (r.v.),
$(\Fk_k)_{k \in \N}$ is a sequence of $\sigma$-algebras such that, $\forall k \in \N$, 
$\Fk_k \subset \Fk_{k+1} \subset \mathfrak{A}$;
$i_{k-1}$, $\bx_k$, $\sigma_k^2$ and $N_k$  are $\Fk_k$-measurable, and 
$i_k$ is independent of $\Fk_k$.
\end{assumptions}
In the smooth case, Assumption~\eqref{ass:b1} ensures that $\EE[\bv_k \,\vert\, \Fk_k] = \EE[\nabla f_{i_k} (\bx_k)\,\vert\, \Fk_k] = \nabla F(\bx_k)$, so that the  direction $\bv_k$ is an unbiased estimator of the full gradient of $F$ at $\bx_k$, which is a standard assumption in the related literature. Assumption~\eqref{ass:b2} on $\EE\left[D_{h}(\bx_{k},\bz_{k+1}) \,\vert\, \Fk_k\right]$ is the equivalent of what is called, in the literature \citep{khaled2020better, demidovich2023guide} and in the Euclidean setting, the expected smoothness or $ABC$-assumption on $\EE\left[\norm{\nabla f_{i_k}(\bx_k)}^2 \,\vert\, \Fk_k\right]$ with  $\sigma_k = \|\nabla F(\bx_k)\|$ and $N_k$ constant.
Assumption~\eqref{ass:b3} provides some control on the variance from iteration to iteration. Indeed, as we will see later that the sequence $(\sigma_k)_{k \in \N}$ encodes the variance of the Algorithm~\myref{algo:unified}. Depending on $\rho$, \eqref{ass:b3} makes sure that the variance does not blow up and possibly reduces along the iterations whenever the algorithm converges.
\subsection{Convergence Analysis}\label{sect:analysis}
Now, we can present the main two theorems. 
\begin{theorem}[$F$ is only convex]\label{theo:unifiedmainonlyconvex}
Suppose that Assumptions~\myref{ass:variance} hold with $\rho > 0$. Let $(M_k)_{k\in \N}$ be a non-increasing positive real-valued sequence such that $M_k \geq \frac{B_k}{\rho}$, $\forall k \in \N$. Suppose also that the sequence $(\bx_k)_{k \in \N}$ is generated by Algorithm~\myref{algo:unified} with $(\alpha_k)_{k \in \N}$ a non-decreasing positive real-valued sequence such that $\alpha_k < \frac{1}{A_k + M_k C_k}$, $\forall k \in \N$. Then, for all $k \in \N$,
\begin{align}
    \EE[F(\bbar{\bx}_{k})- F(\bxs_*)] 
    &\leq \frac{(1/\alpha_{0}^2) \EE[ D_{h}(\bxs_*, \bx_{0})] + M_{0}\EE[\sigma_{0}^2]}{\sum_{t=0}^{k-1} (1/\alpha_{t})\left(1-\alpha_t(A_t + M_t C_t)\right)} 
    \nonumber \\
    &\qquad 
    + \sum_{t=0}^{k-1} \frac{\EE[N_t]}{\sum_{t=0}^{k-1} (1/\alpha_{t})\left(1-\alpha_t(A_t + M_t C_t)\right)}
     \nonumber,
\end{align}
with $\displaystyle \bbar{\bx}_k = \sum_{t=0}^{k-1} \frac{(1/\alpha_{t})\left(1-\alpha_t(A_t + M_t C_t)\right)}{\sum_{t=0}^{k-1} (1/\alpha_{t})\left(1-\alpha_t(A_t + M_t C_t)\right)} \bx_t$.
\end{theorem}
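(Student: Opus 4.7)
The strategy is to derive a one-step Bregman recursion for $\EE[D_{h}(\bxs_*, \bx_{k+1})]$, combine it with the variance recursion of Assumption~\ref{ass:b3} into a Lyapunov function that telescopes, and conclude by Jensen's inequality on $F$. The nontrivial work sits in the one-step recursion, where the implicit update, convexity of $f_{i_k}$ at both $\bx_{k+1}$ and $\bx_k$, and the virtual explicit iterate $\bz_{k+1}$ all enter.

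First I would apply Lemma~\ref{lem:20250424a} with $(\bxs, \bys, \bzs) = (\bxs_*, \bx_{k+1}, \bx_k)$ to rewrite $D_{h}(\bxs_*, \bx_{k+1})$ as $D_{h}(\bxs_*, \bx_k) - D_{h}(\bx_{k+1}, \bx_k) + \langle \nabla h(\bx_k) - \nabla h(\bx_{k+1}), \bxs_* - \bx_{k+1}\rangle$, and plug in the implicit optimality relation $\nabla h(\bx_k) - \nabla h(\bx_{k+1}) = \alpha_k(\bg_{k+1} - \be_k)$ with $\bg_{k+1} \in \partial f_{i_k}(\bx_{k+1})$. I would then use convexity of $f_{i_k}$ at $\bx_{k+1}$ to bound $\langle \bg_{k+1}, \bxs_* - \bx_{k+1}\rangle \leq f_{i_k}(\bxs_*) - f_{i_k}(\bx_{k+1})$, insert $\pm f_{i_k}(\bx_k)$, and apply convexity at $\bx_k$ with a subgradient $\bg_k \in \partial f_{i_k}(\bx_k)$ to obtain $f_{i_k}(\bx_k) - f_{i_k}(\bx_{k+1}) \leq \langle \bg_k, \bx_k - \bx_{k+1}\rangle$. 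Writing $\bg_k = \bv_k + \be_k$ per~\eqref{eq:10022025c} and collecting the $\be_k$-contributions makes them coalesce into the single inner product $\alpha_k \langle \be_k, \bx_k - \bxs_*\rangle$, whose conditional expectation vanishes by Assumption~\ref{ass:b1} since $\bx_k - \bxs_*$ is $\Fk_k$-measurable.

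The main obstacle is to handle the leftover cross term $\alpha_k\langle \bv_k, \bx_k - \bx_{k+1}\rangle - D_{h}(\bx_{k+1}, \bx_k)$, which mixes the implicit iterate with the explicit direction. Here I would exploit the defining optimality of the virtual explicit iterate, $\alpha_k \bv_k = \nabla h(\bx_k) - \nabla h(\bz_{k+1})$, together with a second application of the three-point identity at $(\bx_{k+1}, \bx_k, \bz_{k+1})$, to obtain the clean identity $\alpha_k \langle \bv_k, \bx_k - \bx_{k+1}\rangle - D_{h}(\bx_{k+1}, \bx_k) = D_{h}(\bx_k, \bz_{k+1}) - D_{h}(\bx_{k+1}, \bz_{k+1}) \leq D_{h}(\bx_k, \bz_{k+1})$. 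That last quantity is precisely what Assumption~\ref{ass:b2} controls. Taking total expectation then yields the one-step recursion $\EE[D_{h}(\bxs_*, \bx_{k+1})] \leq \EE[D_{h}(\bxs_*, \bx_k)] - \alpha_k(1 - \alpha_k A_k)\EE[F(\bx_k) - F(\bxs_*)] + \alpha_k^2 B_k \EE[\sigma_k^2] + \alpha_k^2 \EE[N_k]$.

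To close, I would divide this recursion by $\alpha_k^2$, use monotonicity of $\alpha_k$ to pass to $\alpha_{k+1}^{-2}\EE[D_{h}(\bxs_*, \bx_{k+1})]$ on the left, and add $M_{k+1}$ times the variance recursion from Assumption~\ref{ass:b3}. The hypothesis $M_k \geq B_k/\rho$ combined with $M_k$ non-increasing ensures $M_k(1-\rho) + B_k \leq M_k$, so the $\EE[\sigma_k^2]$ terms are absorbed into the Lyapunov $V_k := \alpha_k^{-2}\EE[D_{h}(\bxs_*, \bx_k)] + M_k\EE[\sigma_k^2]$, and the recursion becomes $V_{k+1} \leq V_k - \alpha_k^{-1}(1 - \alpha_k(A_k + M_k C_k))\EE[F(\bx_k) - F(\bxs_*)] + \EE[N_k]$, whose function-value coefficient is positive by the stepsize condition. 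Telescoping from $0$ to $k-1$ using $V_k \geq 0$ produces a weighted sum of $\EE[F(\bx_t) - F(\bxs_*)]$ controlled by $V_0 + \sum_{t=0}^{k-1} \EE[N_t]$, and Jensen's inequality applied to $F$ at the convex combination $\bbar{\bx}_k$ with the stated weights delivers the announced bound.
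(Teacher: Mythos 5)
Your proposal is correct and follows essentially the same route as the paper: the one-step inequality you derive is exactly Proposition~\ref{prop:unified} with $\beta=0$ (the paper obtains it by invoking convexity of the perturbed function $R_{i_k}$ and the three-point identity twice, together with the virtual iterate via $\alpha_k\bv_k=\nabla h(\bx_k)-\nabla h(\bz_{k+1})$, which matches your bookkeeping), and your Lyapunov step with $M_k$, the absorption $M_k(1-\rho)+B_k\leq M_k$, telescoping, and Jensen reproduce Lemma~\ref{prop:unified3} and the paper's concluding argument. No gaps.
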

\begin{theorem}[F is $\mu$-relatively strongly convex]\label{theo:strongconvex}
Suppose that Assumptions~\eqref{ass:intmin}, \myref{ass:variance} and~\eqref{ass:a12} are verified with $\rho > 0$. Let $(M_k)_{k\in \N}$ be a non-increasing positive real-valued sequence such that $M_k > \frac{B_k}{\rho}$, $\forall k \in \N$.  Suppose also that the sequence
$(\bx_k)_{k \in \N}$ is generated by Algorithm~\myref{algo:unified} with $(\alpha_k)_{k \in \N}$ a non-decreasing positive real-valued sequence such that $\alpha_k < \frac{1}{A_k + M_k C_k}$, $\forall k \in \N$. Set $ q_k \coloneqq \max\left\{1 - \alpha_k\gamma_h\mu\left(1-\alpha_k (A_k+ M_k C_k)\right), 1+\frac{B_k}{M_k}-\rho\right\}$. Then for all $k \in \N$, $q_k \in ]0,1[$ and
\begin{equation}
    V_{k+1} \leq q_kV_k + \EE[N_{k}], \nonumber
\end{equation}
where $\displaystyle V_k = \EE\left[\frac{1}{\alpha_{k}^2}  D_{h}(\bxs_*, \bx_{k}) + M_{k}\sigma_{k}^2\right]$, $\forall k \in \N$.
\end{theorem}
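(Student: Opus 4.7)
The plan is to establish a Lyapunov contraction for $V_k$ by first producing a conditional one-step bound on $\EE[D_h(\bxs_*,\bx_{k+1})\mid\Fk_k]$ in the spirit of standard (Bregman) proximal analyses, and then coupling it with the variance recursion \ref{ass:b3} through the weights $\alpha_k^{-2}$ and $M_k$. The starting point is the first-order optimality condition for the implicit update: since the subproblem admits a unique minimizer in $\interior\mathcal{C}$, the normal cone to $\HH$ at $\bx_{k+1}$ is trivial and there exists $\bg_{k+1}\in\partial f_{i_k}(\bx_{k+1})$ with $\nabla h(\bx_{k+1})-\nabla h(\bx_k)=-\alpha_k(\bg_{k+1}-\be_k)$. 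Plugging this into the three-point identity (Lemma~\ref{lem:20250424a}) at $(\bxs_*,\bx_{k+1},\bx_k)$ and using the convexity inequality $\langle\bg_{k+1},\bx_{k+1}-\bxs_*\rangle\geq f_{i_k}(\bx_{k+1})-f_{i_k}(\bxs_*)$ yields a first form of the bound featuring $-D_h(\bx_{k+1},\bx_k)$, $-\alpha_k(f_{i_k}(\bx_{k+1})-f_{i_k}(\bxs_*))$, and a cross term $\alpha_k\langle\be_k,\bx_{k+1}-\bxs_*\rangle$ in which $\be_k$, $\bg_{k+1}$ and $\bx_{k+1}$ are all correlated through $i_k$.

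The main obstacle is decoupling that correlated cross term, and this is precisely where the virtual explicit iterate $\bz_{k+1}$ enters. I would replace $f_{i_k}(\bx_{k+1})$ via the subgradient inequality at $\bx_k$, $f_{i_k}(\bx_{k+1})\geq f_{i_k}(\bx_k)+\langle\bg_k,\bx_{k+1}-\bx_k\rangle$, choosing $\bg_k\in\partial f_{i_k}(\bx_k)$ as the subgradient defining $\bv_k=\bg_k-\be_k$ and $\bz_{k+1}=\nabla h^*(\nabla h(\bx_k)-\alpha_k\bv_k)$. Regrouping then collapses the cross terms into $-\alpha_k\langle\bv_k,\bx_{k+1}-\bx_k\rangle+\alpha_k\langle\be_k,\bx_k-\bxs_*\rangle$. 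The Fenchel--Young inequality for Bregman distances bounds the first summand by $D_h(\bx_{k+1},\bx_k)+D_{h^*}(\nabla h(\bx_k)-\alpha_k\bv_k,\nabla h(\bx_k))$, and the duality identity of Remark~1 identifies the second term with $D_h(\bx_k,\bz_{k+1})$. The crucial point is that the newly created $D_h(\bx_{k+1},\bx_k)$ exactly cancels the one from the optimality step, leaving
\begin{equation*}
D_h(\bxs_*,\bx_{k+1})\leq D_h(\bxs_*,\bx_k)-\alpha_k(f_{i_k}(\bx_k)-f_{i_k}(\bxs_*))+D_h(\bx_k,\bz_{k+1})+\alpha_k\langle\be_k,\bx_k-\bxs_*\rangle.
\end{equation*}
Taking $\EE[\cdot\mid\Fk_k]$, Assumption~\ref{ass:b1} kills the last term because $\bx_k,\bxs_*$ are $\Fk_k$-measurable, independence of $i_k$ turns $f_{i_k}(\bx_k)$ into $F(\bx_k)$, and \ref{ass:b2} controls $\EE[D_h(\bx_k,\bz_{k+1})\mid\Fk_k]$, delivering
\begin{equation*}
\EE[D_h(\bxs_*,\bx_{k+1})\mid\Fk_k]\leq D_h(\bxs_*,\bx_k)-\alpha_k(1-\alpha_k A_k)(F(\bx_k)-F(\bxs_*))+\alpha_k^2 B_k\sigma_k^2+\alpha_k^2 N_k.
\end{equation*}

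To close the argument I would take full expectation, divide by $\alpha_k^2$, use $\alpha_{k+1}^{-2}\leq\alpha_k^{-2}$ on the left, add $M_{k+1}\EE[\sigma_{k+1}^2]$ bounded via \ref{ass:b3}, and use $M_{k+1}\leq M_k$ to rewrite everything in terms of $M_k$. The coefficient in front of $\EE[F(\bx_k)-F(\bxs_*)]$ is then $-(1-\alpha_k(A_k+M_k C_k))/\alpha_k$, which is strictly negative by the stepsize assumption; I would absorb it into the Bregman part using the relative strong convexity of $F$, together with $\nabla F(\bxs_*)=0$ (Assumption~\ref{ass:intmin}) and the symmetry coefficient, which gives $F(\bx_k)-F(\bxs_*)\geq\mu\gamma_h D_h(\bxs_*,\bx_k)$. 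Matching the resulting coefficients with $V_k=\EE[\alpha_k^{-2}D_h(\bxs_*,\bx_k)+M_k\sigma_k^2]$ produces contraction factors $1-\alpha_k\mu\gamma_h(1-\alpha_k(A_k+M_k C_k))$ on the Bregman part and $1+B_k/M_k-\rho$ on the variance part, both dominated by $q_k$, hence $V_{k+1}\leq q_k V_k+\EE[N_k]$. Finally $q_k\in (0,1)$ is immediate: the strict hypothesis $M_k>B_k/\rho$ places $1+B_k/M_k-\rho$ in $(0,1)$, and the strict stepsize bound $\alpha_k<1/(A_k+M_k C_k)$, together with $\mu\gamma_h$ being dominated by $A_k+M_k C_k$ (as is the case in the regimes of interest), places the other candidate in $(0,1)$ as well.
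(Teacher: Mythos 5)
Your proposal follows essentially the same route as the paper: the one-step bound you derive (decoupling the correlated term via the virtual iterate $\bz_{k+1}$, cancelling $D_h(\bx_{k+1},\bx_k)$, then invoking \ref{ass:b1}--\ref{ass:b2}) is exactly the paper's Proposition~\ref{prop:unified}, your Fenchel--Young step being equivalent to the paper's three-point identity plus dropping $D_h(\bx_{k+1},\bz_{k+1})\geq 0$, and the subsequent weighting by $\alpha_k^{-2}$ and $M_k$, use of \ref{ass:b3}, and absorption via $F(\bx_k)-F(\bxs_*)\geq \mu\gamma_h D_h(\bxs_*,\bx_k)$ reproduces Lemma~\ref{prop:unified3} and the paper's proof of Theorem~\ref{theo:strongconvex}. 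The only blemish is your hedge on $q_k\in\,]0,1[$ (invoking ``regimes of interest''), but this is at the same level of detail as the paper itself and does not affect the main recursion.
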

\begin{remark}
    In both Theorem~\myref{theo:unifiedmainonlyconvex} and~\myref{theo:strongconvex}, on the right hand sides, the second terms may seem problematic. However, later, for specified variance-reduced algorithms, we will have $N_k \leq 0$ for every $k \in \N$. So they can be dropped. Therefore, in terms of order of convergence, these theorems establish the standard sublinear $\mathcal{O}(1/k)$ and linear $\mathcal{O}(q^k)$ rates for the generic Algorithm~\myref{algo:unified}, corresponding, respectively, to convex and relatively strongly convex functions, without requiring a vanishing stepsize. This is an improvement on BSPPA without variance reduction; see Section~\myref{sect:vanilla}. However, the constants in both theorems depend on the sequence $(G_k)_{k \in \N}$ and this can impede those rates. In the ideal case of Euclidean or quadratic kernel, $G_k = 1$ \citep{pmlr-v139-dragomir21a}, and we recover the generic results of \citet{traore2024variance}. 
\end{remark}
\end{section}
\section{Instantiation of specific algorithms}\label{sect:applications}
In this section, we specialize the generic Algorithm~\myref{algo:unified} with different variance reduction techniques by specifying the term $\be_k$.
\begin{subsection}{Bregman Stochastic Proximal Point Algorithm (BSPPA)}\label{sect:vanilla}
We start by the vanilla BSPPA \citep{davis2018stochastic}, which is also covered by the generic algorithm and analysis by taking $\be_k = 0$ for all $k \in \N$. For BSPPA, the functions can potentially be nonsmooth.
    \begin{algorithm}[BSPPA] \label{algo:sppa}
        Let $(i_k)_{k \in \N}$ be a sequence of i.i.d.~random variables uniformly distributed on $\{1,\dots, n\}$. Let $\bx_0 \equiv \bxs_0 \in \interior\mathcal{C}$ and $\alpha_k>0$ for all $k \in \N$.
        \begin{equation}\label{eq:20230217e}
        \begin{array}{l}
        \text{for}\;k=0,1,\ldots\\
        \left\lfloor
        \begin{array}{l}
        \bx_{k+1} = \argmin_{\bxs \in \HH}\{f_{i_{k}}(\bxs) +\frac{1}{\alpha_k}D_{h}(\bxs,\bx_{k})\}.
        \end{array}
        \right.
        \end{array} 
        \end{equation}
    \end{algorithm}
    \begin{theorem}[$F$ is only convex]\label{theo:bsppa}
        Suppose that Assumptions~\eqref{ass:b1} and~\eqref{ass:b2} hold with $A_k = N_k = 0$ and $B_k \sigma_{k}^2 \leq \sigma^2_{*} \geq 0$ (i.e., $\EE[D_{h}(\bx_{k},\bz_{k+1}) \,\vert\, \Fk_k]$ is bounded by $\alpha_k^2 \sigma^2_{*}$). 
        Suppose also that the sequence $(\bx_k)_{k \in \N}$ is generated by Algorithm~\myref{algo:sppa}. Then, for $k \geq 1$,
        \begin{equation}
            \EE[F(\bbar{\bx}_{k})- F(\bxs_*)] \leq \frac{D_{h}(\bxs_*, \bx_{0})}{\sum_{t=0}^{k-1}\alpha_t} + \sigma^2_{*} \frac{\sum_{t=0}^{k-1}\alpha_t^2}{\sum_{t=0}^{k-1}\alpha_t}, \nonumber
        \end{equation}
        where $\displaystyle \bbar{\bx}_{k} = \sum_{t=0}^{k-1}\frac{\alpha_t}{\sum_{t=0}^{k-1}\alpha_t} \bx_t$.
    \end{theorem}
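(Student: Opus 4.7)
The strategy is to mount a standard primal BSPPA descent argument, exploiting the virtual explicit iterate $\bz_{k+1}$ introduced in Section~\ref{sect:algo} to convert the usual implicit residual into the quantity governed by Assumption~\ref{ass:b2}.

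First, from the optimality condition of the proximal update~\eqref{eq:20230217e}, there exists a subgradient $\bg_{k+1} \in \partial f_{i_k}(\bx_{k+1})$ with $\alpha_k \bg_{k+1} = \nabla h(\bx_k) - \nabla h(\bx_{k+1})$ (the assumed well-posedness in $\interior \mathcal{C}$ removes normal-cone complications). Testing convexity of $f_{i_k}$ against $\bxs_*$ and applying the three-point identity (Lemma~\ref{lem:20250424a}) yields the classical implicit bound
\begin{equation*}
\alpha_k \bigl(f_{i_k}(\bx_{k+1}) - f_{i_k}(\bxs_*)\bigr) \leq D_h(\bxs_*, \bx_k) - D_h(\bxs_*, \bx_{k+1}) - D_h(\bx_{k+1}, \bx_k).
\end{equation*}
The main obstacle is that this controls the functional gap at $\bx_{k+1}$, whereas the unbiasedness Assumption~\ref{ass:b1} cleanly averages only $f_{i_k}(\bx_k)$ into $F(\bx_k)$. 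I would therefore split $f_{i_k}(\bx_{k+1}) = f_{i_k}(\bx_k) + [f_{i_k}(\bx_{k+1}) - f_{i_k}(\bx_k)]$ and bound $\alpha_k[f_{i_k}(\bx_k) - f_{i_k}(\bx_{k+1})]$ via convexity using a subgradient $\bg_k \in \partial f_{i_k}(\bx_k)$. Because $\alpha_k \bg_k = \nabla h(\bx_k) - \nabla h(\bz_{k+1})$ by definition of the virtual iterate, a second application of Lemma~\ref{lem:20250424a} rewrites the resulting linear term in terms of Bregman distances, and after cancellation the leftover $\alpha_k[f_{i_k}(\bx_k) - f_{i_k}(\bx_{k+1})] - D_h(\bx_{k+1}, \bx_k)$ is majorized by $D_h(\bx_k, \bz_{k+1}) - D_h(\bx_{k+1}, \bz_{k+1}) \leq D_h(\bx_k, \bz_{k+1})$, which is exactly the quantity Assumption~\ref{ass:b2} constrains.

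Combining the two inequalities produces
\begin{equation*}
\alpha_k \bigl(f_{i_k}(\bx_k) - f_{i_k}(\bxs_*)\bigr) \leq D_h(\bxs_*, \bx_k) - D_h(\bxs_*, \bx_{k+1}) + D_h(\bx_k, \bz_{k+1}).
\end{equation*}
I would then take conditional expectation given $\Fk_k$: Assumption~\ref{ass:b1} produces $F(\bx_k) - F(\bxs_*)$ on the left, while Assumption~\ref{ass:b2} specialized to $A_k = N_k = 0$, combined with the hypothesis $B_k \sigma_k^2 \leq \sigma_*^2$, upper-bounds $\EE[D_h(\bx_k, \bz_{k+1}) \,\vert\, \Fk_k]$ by $\alpha_k^2 \sigma_*^2$. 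Taking full expectation, summing over $t = 0, \dots, k-1$ so that the Bregman-distance terms telescope (and dropping the nonnegative remainder $\EE[D_h(\bxs_*, \bx_k)]$), and finally invoking Jensen's inequality for the convex combination $\bbar{\bx}_k$ delivers the advertised rate.
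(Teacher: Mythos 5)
Your proposal is correct and follows essentially the same route as the paper: your per-step inequality is exactly Proposition~\ref{prop:unified} specialized to $\be_k = 0$, $\beta = 0$, and your derivation of it (convexity at $\bx_{k+1}$ plus the three-point identity, then splitting the gap back to $\bx_k$ via a subgradient $\bg_k$, identifying $\alpha_k \bg_k$ with $\nabla h(\bx_k)-\nabla h(\bz_{k+1})$, applying the three-point identity again and dropping $D_h(\bx_{k+1},\bz_{k+1})\geq 0$) mirrors the paper's proof of that proposition step for step. The subsequent conditional expectation, telescoping sum, and Jensen argument coincide with the paper's proof of Theorem~\ref{theo:bsppa}; the only difference is that you re-derive the key inequality inline for the special case rather than citing the general proposition.
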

    \begin{remark}
        Here, $\sigma^2_{*}$ represents a hard bound on $\EE[D_{h}(\bx_{k},\bz_{k+1}) \,\vert\, \Fk_k]$ and, in some sense, encodes the variance of the algorithm. If we set the stepsize to be constant, i.e. $\alpha_k = \alpha$, we obtain
        \begin{align*}
            \EE[F(\bbar{\bx}_{k})- F(\bxs_*)] \leq \frac{D_{h}(\bxs_*, \bx_{0})}{\alpha k} + \alpha \sigma^2_{*}.
        \end{align*}
        This equation shows that the algorithm, because of the variance, will converge to a ball around the minimum rather than the minimum itself and keep oscillating. A vanishing stepsize (typically $(\alpha_k)_{k \in \N}\in \ell_2 \setminus \ell_1$) can be used in order to cancel $\sigma^2_{*}$ asymptotically. But, this slows down the algorithm to a convergence of order $O(1/\sqrt{k})$.
    \end{remark}

    \begin{theorem}\label{theo:bsppa2}
        Suppose that Assumptions~\eqref{ass:a11}, \eqref{ass:b1}, and \eqref{ass:b2} hold with $\beta > 0$ (each $f_i$ is relatively strongly convex), $A_k = N_k = 0$ and $B_{k} \sigma_k^2 \leq \sigma^2_{*} \geq 0$ (i.e., $\EE[D_{h}(\bx_{k},\bz_{k+1}) \,\vert\, \Fk_k]$ is bounded by $\alpha_k^2 \sigma^2_{*}$). Let $\alpha_k = \alpha > 0$, $\forall k \in \N$. Suppose also that the sequence $(\bx_k)_{k \in \N}$ is generated by Algorithm~\myref{algo:sppa}. Then, for $ k \geq 1$,
        \begin{equation}
            \EE[D_{h}(\bxs_*, \bx_{k})]  \leq q^k \EE[D_{h}(\bxs_*, \bx_{0})] +  \alpha^2\frac{1}{1-q} \sigma^2_{*}, \nonumber
        \end{equation}
        where $q = \frac{1}{1+\beta\alpha}$.
    \end{theorem}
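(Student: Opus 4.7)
The plan is to derive a one-step conditional contraction of the form
\begin{equation*}
(1+\alpha\beta)\,\EE[D_{h}(\bxs_*, \bx_{k+1}) \mid \Fk_k] \leq D_{h}(\bxs_*, \bx_k) + \alpha^{2}\sigma_*^{2},
\end{equation*}
equivalently $\EE[D_{h}(\bxs_*,\bx_{k+1})\mid\Fk_k]\le q\,D_{h}(\bxs_*,\bx_k)+q\alpha^{2}\sigma_*^{2}$ with $q=1/(1+\alpha\beta)$, and then to take total expectation and unroll the recursion geometrically. Since $q/(1-q)\le 1/(1-q)$, the resulting noise constant matches the claimed $\alpha^{2}\sigma_*^{2}/(1-q)$.

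To set up the one-step inequality, I would start from the optimality condition of~\eqref{eq:20230217e}, which produces $\bg_{k+1}\in\partial f_{i_k}(\bx_{k+1})$ with $\alpha \bg_{k+1}=\nabla h(\bx_k)-\nabla h(\bx_{k+1})$. Applying Lemma~\ref{lem:20250424a} at the triple $(\bxs_*,\bx_{k+1},\bx_k)$ and invoking the $\beta$-relative strong convexity inequality~\eqref{PL condition} of $f_{i_k}$ at $\bx_{k+1}$ yields the standard implicit-step bound
\begin{equation*}
(1+\alpha\beta)\,D_{h}(\bxs_*, \bx_{k+1}) + D_{h}(\bx_{k+1}, \bx_k) \leq D_{h}(\bxs_*, \bx_k) + \alpha\bigl[f_{i_k}(\bxs_*) - f_{i_k}(\bx_{k+1})\bigr].
\end{equation*}

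The main obstacle is to control the last term in conditional expectation, because $\bx_{k+1}$ depends on the sampled index $i_k$ and the expectation does not reduce to $F(\bx_{k+1})$. My plan is to split $f_{i_k}(\bxs_*)-f_{i_k}(\bx_{k+1}) = [f_{i_k}(\bxs_*)-f_{i_k}(\bx_k)]+[f_{i_k}(\bx_k)-f_{i_k}(\bx_{k+1})]$. The first bracket has conditional expectation $F(\bxs_*)-F(\bx_k)\le 0$ by optimality of $\bxs_*$. For the second bracket I would use the convexity inequality of $f_{i_k}$ at $\bx_k$ with the same subgradient $\bg_k\in\partial f_{i_k}(\bx_k)$ that defines the virtual explicit iterate $\bz_{k+1}$ via $\alpha\bg_k=\nabla h(\bx_k)-\nabla h(\bz_{k+1})$, obtaining $\alpha[f_{i_k}(\bx_k)-f_{i_k}(\bx_{k+1})] \leq \langle \nabla h(\bx_k)-\nabla h(\bz_{k+1}),\,\bx_k-\bx_{k+1}\rangle$. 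A second application of Lemma~\ref{lem:20250424a}, now at $(\bx_{k+1},\bx_k,\bz_{k+1})$, rewrites this inner product as $D_{h}(\bx_{k+1},\bx_k)+D_{h}(\bx_k,\bz_{k+1})-D_{h}(\bx_{k+1},\bz_{k+1})$, which is at most $D_{h}(\bx_{k+1},\bx_k)+D_{h}(\bx_k,\bz_{k+1})$.

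Plugging these bounds back into the implicit-step inequality, the two $D_{h}(\bx_{k+1},\bx_k)$ terms cancel exactly. Taking $\EE[\cdot\mid\Fk_k]$ and applying Assumption~\ref{ass:b2}, which in the present regime ($A_k=N_k=0$, $B_k\sigma_k^{2}\le\sigma_*^{2}$) gives $\EE[D_{h}(\bx_k,\bz_{k+1})\mid\Fk_k]\le\alpha^{2}\sigma_*^{2}$, yields the target one-step contraction. Taking total expectation and iterating the scalar recursion $v_{k+1}\le q\,v_k + q\alpha^{2}\sigma_*^{2}$ with $v_k=\EE[D_{h}(\bxs_*,\bx_k)]$ produces $v_k\le q^{k}v_{0}+q\alpha^{2}\sigma_*^{2}\sum_{j=0}^{k-1}q^{j}\le q^{k}v_{0}+\alpha^{2}\sigma_*^{2}/(1-q)$, which is the claim, using that $\bx_0$ is deterministic so $v_0=D_{h}(\bxs_*,\bx_0)$.
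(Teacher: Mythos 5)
Your proof is correct and follows essentially the same route as the paper: your one-step inequality is exactly the specialization of Proposition~\ref{prop:unified} to $\be_k=0$, $A_k=N_k=0$ (obtained there from the same ingredients — the optimality condition, relative strong convexity, convexity of $f_{i_k}$ at $\bx_k$ together with the virtual iterate $\bz_{k+1}$, the three-point identity of Lemma~\ref{lem:20250424a}, and Assumption~\ref{ass:b2}), after which both arguments drop the nonnegative term $\alpha\left(F(\bx_k)-F(\bxs_*)\right)$ and unroll the geometric recursion with $q=1/(1+\beta\alpha)$. The only difference is presentational: the paper invokes the proposition, whereas you re-derive it inline, anchoring the relative strong convexity at $\bx_{k+1}$ instead of folding $-\beta D_h(\cdot,\bx_k)$ into the subproblem objective as the paper's proof does.
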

\begin{remark}
    As in the only convex case, with a constant stepsize, the algorithm will only converge to a ball around the minimizer and oscillate there due to the variance. Also using a vanishing to kill the variance leads to reduce convergence rate from linear to $O(1/k)$. So we need another way to reduce the variance, that what the next sections will present.
\end{remark}
\begin{remark}[relatively smooth and interpolation case]\label{rm:interpolation} If $f_i$ is relatively smooth for all $i \in [n]$, $B_k\sigma_k^2 \propto \EE_k[D_{h^*}(\nabla h(\bx_{k}) - 2\alpha \nabla f_{i_k}(\bxs_{*}), \nabla h(\bx_{k}))]$; see \citet[Section 3.2]{pmlr-v139-dragomir21a}. Therefore, in the very special case of interpolation, i.e. $\nabla f_i (\bxs_*) = 0$ for all $i \in [n]$, $\sigma^2_{*} = 0$ and BSPPA does converge to the actual minimum and have a good sublinear and linear rates for convex and relatively strongly convex functions, respectively.    
\end{remark}
In the following, we will present the variance-reduced algorithms. We defer the double-loop, SVRG-style variant of the variance-reduced BSPPA to Appendix~\myref{sect:bsvrpsect}, and focus in the main text on its single-loop counterpart. The first variance-reduced BSPPA that we proposed is using a SAGA-style technique, and we coined it BSAPA. It is the Bregman version of SAPA proposed by \citet{traore2024variance}.
\subsection{Bregman SAPA (BSAPA)}
\begin{algorithm}[BSAPA]
\label{algo:sagaprox}
Let $(i_k)_{k \in \N}$ be a sequence of i.i.d.~random variables uniformly distributed on $\{1,\dots, n\}$. Let $\alpha_k>0$ for every $k \in \N$, and set, for every $i \in [n]$, $\bx_0 \equiv \bphi_i^0 \equiv \bxs_0 \in \interior\mathcal{C}$.
\begin{equation}
\begin{array}{l}
\text{for}\;k=0,1,\ldots\\
\left\lfloor
\begin{array}{l}
\begin{array}{rl}
&
\bx_{k+1} = \argmin_{\bxs \in \HH}\Big\{ f_{i_{k}}(\bxs) +\frac{1}{\alpha_k}D_{h}(\bxs,\bx_{k})
\\
& 
-\scalarp{\nabla f_{i_k} (\bphi_{i_k}^k) - \frac{1}{n} \sum_{i=1}^n \nabla f_i (\bphi_i^k),\bxs-\bx_{k}}  \Big\}
\end{array} \\
\forall\, i \in [n]\colon\ \bphi^{k+1}_i = \bphi^k_i + \delta_{i,i_k} (\bx_{k} - \bphi^k_i),
\end{array}
\right.
\end{array} \nonumber
\end{equation}
\end{algorithm}
where $\delta_{i,j}$ is the Kronecker symbol. Here we get $\be_k = \nabla f_{i_k} (\bphi_{i_k}^k) - \frac{1}{n} \sum_{i=1}^n \nabla f_i (\bphi_i^k)$.
We set $\Fk_k = \sigma(i_0, \dots, i_{k-1})$ and $\EE_k[\cdot] = \EE[\cdot\,\vert\, \Fk_k]$. We then have that $\bx_k$ and $\bphi_i^k$ are $\Fk_k$-measurable and $i_k$ is independent of $\Fk_k$. 
Let \(\zeta_k = -2\alpha_k (\nabla f_{i_k}(\bxs_{*}) - \nabla f_{i_k}(\bphi_{i_k}^k)).\) It is clear that $\EE_k[\zeta_k] = 2\alpha_k \frac{1}{n} \sum_{i=1}^n \nabla f_i (\bphi_i^k).$
\begin{lemma}\label{lem:20250311b}
Suppose that Assumptions~\eqref{ass:a13}, \myref{ass:a14} and~\eqref{ass:intmin} hold. Let $s \in \N$ and let $(\bx_k)_{k \in [m]}$ be the sequence generated by the inner iteration in Algorithm~\myref{algo:srvp-l}. We finally assume that there exists a non-increasing sequence $(G_k)_{k \in \N}$ such that, for all $i \in [n]$,
    \begin{align}\label{eq:gain2}
        G_k &\geq G \left(\nabla h(\bx_k), \nabla h(\bx_k), \frac{1}{L_{}} (\nabla f_i(\bx_k) - \nabla f_i(\bxs_{*})) \right), \\
        \label{eq:gain3} G_k &\geq G \Big(\nabla h(\bx_k) - \EE_k[\zeta_k], \nabla h(\bphi_{i}^k), \frac{1}{L_{}} (\nabla f_{i}(\bphi_{i}^k) - \nabla f_{i}(\bxs_{*})) \Big).
    \end{align}
    Then
    \begin{align}\label{eq:varsapa}
        \EE_k[D_h(\bx_{k}, \bz_{k+1})] \leq 2 L \alpha_k^2 G_k D_F(\bx_{k}, \bxs_{*}) + \alpha_k^2 G_k \sigma_k^2 - \frac{1}{2}D_{h^*}(\nabla h(\bx_{k}), \nabla h(\bx_{k}) - \EE_k[\zeta_k]),
    \end{align}
    where 
    \begin{align*}
        \sigma_k^2 = 2 L^2 &\EE_{k} \Big[ D_{h^*} \Big(\nabla h(\bphi_{i_k}^k) - \frac{1}{L} (\nabla f_{i_k}(\bphi_{i_k}^k)\nabla f_{i_k}(\bxs_{*})), \nabla h(\bphi_{i_k}^k) \Big)\Big],
    \end{align*}
    \begin{equation}\label{eq:20221004c}
           \text{and }
           \EE_k\left[\sigma_{k+1}^2\right] \leq \left(1 - \frac{1}{n}\right) \sigma_k^2  + \frac{2L}{n}D_F(\bx_{k}, \bxs_{*}).
    \end{equation}
\end{lemma}
\begin{remark}
    As we stated earlier in Remark~\myref{rk:assumproof}, Lemma~\myref{lem:20250311b} shows that Assumptions~\myref{ass:variance} are verified with $A_k = 2LG_k$, $B_k = G_k$, $ N_k = - (1/2\alpha_k^2)D_{h^*}(\nabla h(\bx_{k}), \nabla h(\bx_{k}) - \EE_k[\zeta_k]) $, $\rho = 1/n$ and $C_k = C = \frac{2L}{n}$. By just putting this values in Theorems~\myref{theo:unifiedmainonlyconvex} and~\myref{theo:strongconvex} we obtain the following two corollaries, respectively.
\end{remark}
\begin{assumption}\label{ass:gk}
For all \(k \geq 1\), we assume \(G_k\) is deterministic and not a random variable. If necessary, we may take \(G_k = G_0, \forall k \in \N\). For upper bounds on the gain function \(G\), see \citep{pmlr-v139-dragomir21a, hanzely2021accelerated}.
\end{assumption}
\begin{corollary}[$F$ is only convex]\label{cor:sapaconv} Let the assumptions of Lemma~\myref{lem:20250311b} and Assumption~\ref{ass:gk} hold.
 Let $(M_k)_{k\in \N}$ be a non-increasing positive real-valued sequence such that $M_k \geq nG_k$, $\forall k \in \N$. Suppose also that the sequence $(\bx_k)_{k \in \N}$ is generated by Algorithm~\myref{algo:srvp-l} with $(\alpha_k)_{k \in \N}$ a non-decreasing positive real-valued sequence such that $\alpha_k < \frac{1}{2L(G_k + M_k/n)}$, $\forall k \in \N$. Then, for all $k \in \N$,
\begin{align}
    \EE[F(\bbar{\bx}_{k})- F(\bxs_*)] &\leq \frac{(1/\alpha_{0}^2) \EE[ D_{h}(\bxs_*, \bx_{0})] + M_{0}\EE[\sigma_{0}^2]}{\sum_{t=0}^{k-1} (1/\alpha_{t})\left(1-2\alpha_tL(G_t + M_t/n)\right)} 
     \nonumber,
\end{align}
with
\begin{equation*}
    \displaystyle \bbar{\bx}_k = \sum_{t=0}^{k-1} \frac{(1/\alpha_{t})\left(1-2\alpha_tL(G_t + M_t/n)\right)}{\sum_{t=0}^{k-1} (1/\alpha_{t})\left(1-2\alpha_tL(G_t + M_t/n)\right)} \bx_t.
\end{equation*}
\end{corollary}
\begin{corollary}[$F$ is $\mu$-relatively strongly convex]\label{cor:sapatrongconvex} Let the assumptions of Lemma~\myref{lem:20250311b} and Assumption~\ref{ass:gk} hold.
Let $(M_k)_{k\in \N}$ be a non-increasing positive real-valued sequence such that $M_k > nG_k$, $\forall k \in \N$.  Suppose also that the sequence
$(\bx_k)_{k \in \N}$ is generated by Algorithm~\myref{algo:srvp-l} with $(\alpha_k)_{k \in \N}$ a non-decreasing positive real-valued sequence such that $\alpha_k < \frac{1}{2L(G_k + M_k/n)}$, $\forall k \in \N$. Set $ q_k \coloneqq \max\left\{1 - \alpha_k\gamma_h\mu\left(1-2\alpha_kL(G_k + M_k/n)\right), 1+\frac{G_k}{M_k}-\frac{1}{n}\right\}$. Then, for all $k \in \N$, $q_k \in ]0,1[$,
\begin{equation}
    V_{k+1} \leq q_kV_k\;\; \text{and}\;\; V_{k+1} \leq \left(\prod_{t=0}^{k}q_t\right)V_0,
\end{equation}
where
\begin{equation*}
    \displaystyle V_k = \EE\left[\frac{1}{\alpha_{k}^2}  D_{h}(\bxs_*, \bx_{k}) + M_{k}\sigma_{k}^2\right].
\end{equation*}
Using relative smoothness, we get
\begin{align*}
    D_{F}(\bxs_*, \bx_{k+1}) \leq \left(\prod_{t=0}^{k}q_t\right)\alpha_{k+1}^2LV_0.
\end{align*}
\end{corollary}
\begin{remark} To see a normal linear rate, we can set $\alpha_k = \alpha = \frac{1}{2L(G_0 + M_0/n)+1} \leq \frac{1}{2L(G_k + M_k/n)}$, for all $k \in \N$. We can always take $M_k$ such that $\frac{G_k}{M_k} = \frac{1}{n+1}$. Then $(q_k)_{k \in \N}$ is non-increasing,
\begin{equation*}
    V_{k+1} \leq q_0 V_k\;\; \text{and}\;\; V_{k+1} \leq q_0^{k+1} V_0.
\end{equation*}
Also 
\begin{align*}
    D_{F}(\bxs_*, \bx_{k+1}) \leq q_0^{k+1} L \EE\left[D_{h}(\bxs_*, \bx_{0}) + \alpha^2M_{0}\sigma_{0}^2\right].
\end{align*}
\end{remark}
\subsection{Bregman Loopless SVRP (BLSVRP)}
 We also proposed BLSVRP, using an L-SVRG-style technique with only one loop; see Algorithm \myref{algo:srvp-l}. The second loop is replaced by a Bernoulli probability. It generalizes, to the Bregman distance, L-SVRP found for instance in \citet{khaled2022faster} and \citet{traore2024variance}.
\begin{algorithm}[BLSVRP]
 \label{algo:srvp-l}
Let $(i_k)_{k \in \N}$ be a sequence of i.i.d.~random variables uniformly distributed on $\{1,\dots, n\}$ and let $(\varepsilon_k)_{k \in \N}$ be a sequence of i.i.d~Bernoulli random variables such that $\mathsf{P}(\varepsilon_k=1)=p \in \left(0,1\right]$. Let $\alpha_k > 0$ for every $k \in \N$, and
set $\bx_0 \equiv \bu_0 \equiv \bxs_0 \in \interior\mathcal{C}$.
\begin{equation}
\begin{array}{l}
\text{for}\;k=0,1,\ldots\\
\left\lfloor
\begin{array}{l}
\begin{array}{rl}
&
\bx_{k+1} = \argmin_{\bxs \in \HH}\Big\{f_{i_{k}}(\bxs) - \scalarp{\nabla f_{i_{k}}(\bu_{k}) - \nabla F(\bu_{k}),\bxs-\bx_{k}} +\frac{1}{\alpha_k}D_{h}(\bxs,\bx_{k}) \Big\}
\end{array} \\
\bu_{k+1}=(1-\varepsilon_k)\bu_k + \varepsilon_k \bx_k
\end{array}
\right.
\end{array} \nonumber
\end{equation}
\vspace{-0.75em}
\end{algorithm}
For BLSVRP, $\be_k = \nabla f_{i_{k}}(\bu_{k}) - \nabla F(\bu_{k})$. Set
 $\Fk_k = \sigma(i_0, \dots, i_{k-1}, \varepsilon_0, \dots, \varepsilon^{k-1})$ and $\EE_k[\cdot] = \EE[\cdot\,\vert\, \Fk_k]$.
We then have that $\bx_k$, $\bu_k$ and $\by_k$ are $\Fk_k$-measurable, $i_k$ and $\varepsilon_k$ are independent of $\Fk_k$. Let $\zeta_k = -2\alpha_k (\nabla f_{i_k}(\bxs_{*}) - \nabla f_{i_k}(\bu_{k}))$. So, $\EE_k[\zeta_k] = 2\alpha_k \nabla F(\bu_{k})$.
\begin{lemma}\label{lem:20250311a}
Suppose that Assumptions~\eqref{ass:a13}, \myref{ass:a14} and~\eqref{ass:intmin} hold. Let $s \in \N$ and let $(\bx_k)_{k \in [m]}$ be the sequence generated by the inner iteration in Algorithm~\myref{algo:srvp-l}. Assume that there exists a non-increasing sequence $(G_k)_{k \in \N}$ such that, for all $i \in [n]$,
    \begin{align}\label{eq:gain4}
        G_k &\geq G \left(\nabla h(\bx_k), \nabla h(\bx_k), \frac{1}{L_{}} (\nabla f_i(\bx_k) - \nabla f_i(\bxs_{*})) \right),  \\
        \label{eq:gain5} G_k &\geq G \Big(\nabla h(\bx_k) - \EE_k[\zeta_k], \nabla h(\bu_{k}), \frac{1}{L_{}} (\nabla f_i(\bu_{k}) - \nabla f_i(\bxs_{*})) \Big) .
    \end{align}
    Then
    \begin{align}\label{eq:varlsvrp}
        &\EE_k[D_h(\bx_{k}, \bz_{k+1})] 
        \leq 2 L \alpha_k^2 G_k D_F(\bx_{k}, \bxs_{*}) + \alpha_k^2 G_k \sigma_k^2 - \frac{1}{2} D_{h^*}(\nabla h(\bx_{k}), \nabla h(\bx_{k}) - \EE_k[\zeta_k]),
    \end{align}
    where 
    \begin{align*}
        \sigma_k^2 = 2 L^2 &\EE_{k} \Big[ D_{h^*} \Big(\nabla h(\bu_{k}) - \frac{1}{L} (\nabla f_{i_k}(\bu_{k}) - \nabla f_{i_k}(\bxs_{*})), \nabla h(\bu_{k}) \Big)\Big],
    \end{align*}
    \begin{equation}\label{eq:20221103b}
        \text{and }\;\; 
        \EE_k\big[\sigma_{k+1}^2\big] \leq (1-p)\sigma_k^2 + 2 p L D_F(\bx_{k}, \bxs_{*}).
    \end{equation}
\end{lemma}
\begin{remark}
    Lemma~\myref{lem:20250311a} shows that Assumptions~\myref{ass:variance} are verified with $A_k = 2LG_k$, $B_k = G_k$, $ N_k = - (1/2\alpha_k^2) D_{h^*}(\nabla h(\bx_{k}), \nabla h(\bx_{k}) - \EE_k[\zeta_k]) $, $\rho = p$ and $C_k = C = 2pL$.
\end{remark}
\begin{corollary}[$F$ is only convex]\label{cor:lsvrpconv} Let the assumptions of Lemma~\myref{lem:20250311a} and Assumption~\ref{ass:gk} hold.
 Let $(M_k)_{k\in \N}$ be a non-increasing positive real-valued sequence such that $M_k \geq \frac{G_k}{p}$, $\forall k \in \N$. Suppose also that the sequence $(\bx_k)_{k \in \N}$ is generated by Algorithm~\myref{algo:srvp-l} with $(\alpha_k)_{k \in \N}$ a non-decreasing positive real-valued sequence such that $\alpha_k < \frac{1}{2L(G_k + M_k p)}$, $\forall k \in \N$. Then, for all $k \in \N$,
\begin{align}
    \EE[F(\bbar{\bx}_{k})- F(\bxs_*)] &\leq \frac{(1/\alpha_{0}^2) \EE[ D_{h}(\bxs_*, \bx_{0})] + M_{0}\EE[\sigma_{0}^2]}{\sum_{t=0}^{k-1} (1/\alpha_{t})\left(1-2\alpha_tL(G_t + M_t p)\right)} 
     \nonumber,
\end{align}
with
\begin{equation*}
    \displaystyle \bbar{\bx}_k = \sum_{t=0}^{k-1} \frac{(1/\alpha_{t})\left(1-2\alpha_tL(G_t + M_t p)\right)}{\sum_{t=0}^{k-1} (1/\alpha_{t})\left(1-2\alpha_tL(G_t + M_t p)\right)} \bx_t.
\end{equation*}
\end{corollary}
\begin{corollary}[$F$ is $\mu$-relatively strongly convex]\label{cor:lsvrpstrongconvex} Let the assumptions of Lemma~\myref{lem:20250311a} and Assumption~\ref{ass:gk} hold.
Let $(M_k)_{k\in \N}$ be a non-increasing positive real-valued sequence such that $M_k > \frac{G_k}{p}$, $\forall k \in \N$.  Suppose also that the sequence
$(\bx_k)_{k \in \N}$ is generated by Algorithm~\myref{algo:srvp-l} with $(\alpha_k)_{k \in \N}$ a non-decreasing positive real-valued sequence such that $\alpha_k < \frac{1}{2L(G_k + M_k p)}$, $\forall k \in \N$. Set $ q_k \coloneqq \max\left\{1 - \alpha_k\gamma_h\mu\left(1-2\alpha_kL(G_k + M_k p)\right), 1+\frac{G_k}{M_k}-p\right\}$. Then, for all $k \in \N$, $q_k \in ]0,1[$,
\begin{equation}
    V_{k+1} \leq q_kV_k\;\; \text{and}\;\; V_{k+1} \leq \left(\prod_{t=0}^{k}q_t\right)V_0,
\end{equation}
where
\begin{equation*}
    \displaystyle V_k = \EE\left[\frac{1}{\alpha_{k}^2}  D_{h}(\bxs_*, \bx_{k}) + M_{k}\sigma_{k}^2\right].
\end{equation*}
Using relative smoothness, we get
\begin{align*}
    D_{F}(\bxs_*, \bx_{k+1}) \leq \left(\prod_{t=0}^{k}q_t\right)\alpha_{k+1}^2LV_0.
\end{align*}
\end{corollary}
\end{subsection}

\section{Experiments}\label{sect:experiments} 
\begin{figure*}[ht]
     \begin{subfigure}[b]{0.22\textwidth}
         \centering
         \includegraphics[width=\textwidth]
         {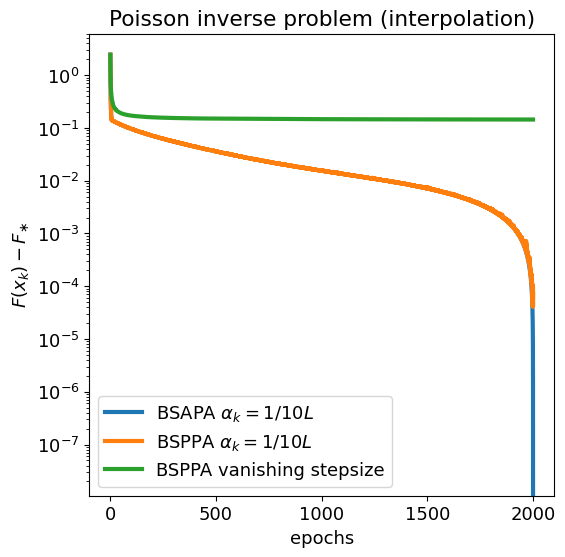}
     \end{subfigure}
     \begin{subfigure}[b]{0.22\textwidth}
         \centering
         \includegraphics[width=\textwidth]
         {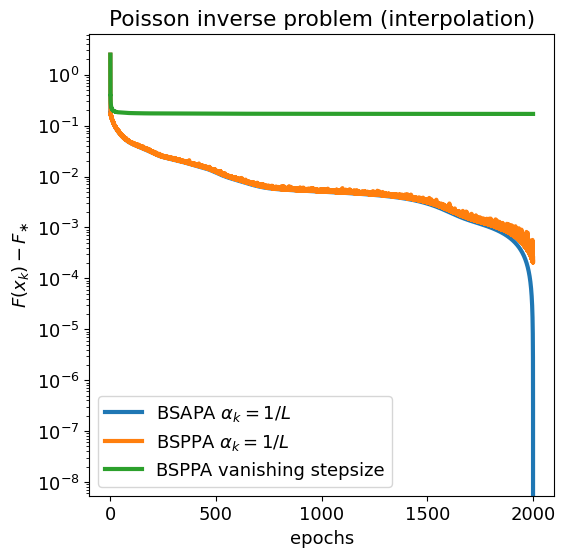}
     \end{subfigure}
     \begin{subfigure}[b]{0.23\textwidth}
         \centering
         \includegraphics[width=\textwidth]
         {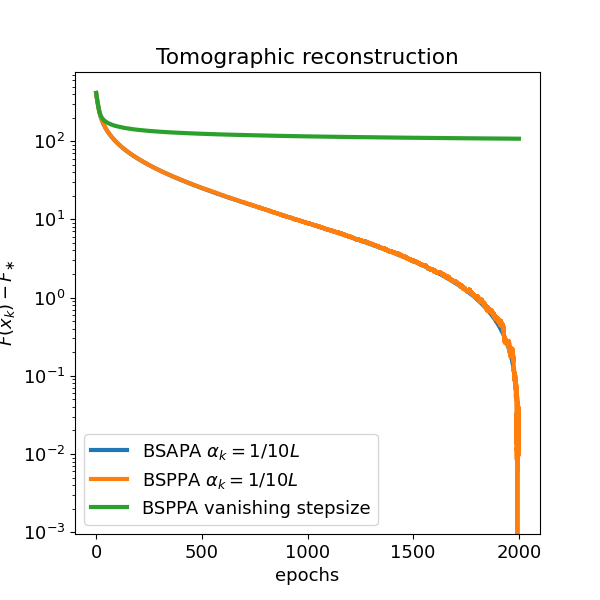}
     \end{subfigure}
     \begin{subfigure}[b]{0.23\textwidth}
         \centering
         \includegraphics[width=\textwidth]
         {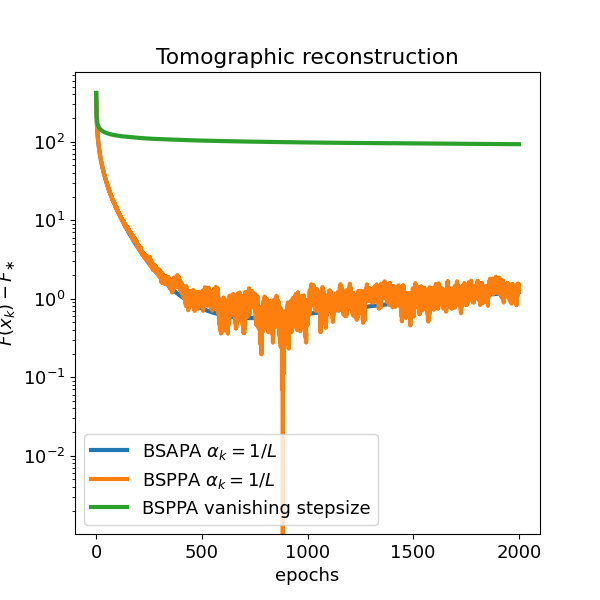}
     \end{subfigure}
    \caption{
    Our BSAPA (variance reduced) is more stable, converges to the minimum and does not oscillate around it, especially in the non-interpolation case with constant stepsize, contrary to BSPPA.
    }
    \label{fig:tomo}
\end{figure*}
    The following experiments go beyond the theory in certain aspects, which are pointed out. However, they still demonstrate the advantages of variance reduction. Using BSAPA, we illustrate those advantages by performing numerical experiments on the Poisson linear inverse problem given by:
    \begin{equation*}
        \min_{\bxs \in \mathbb{R}_{+}^d} F(\bxs) = \frac{1}{n}D_{\text{KL}}(b, A\bxs),
    \end{equation*}
    where
    \begin{align*}
    D_{\text{KL}}(b, A\bxs) \coloneqq \sum_{i=1}^n \Big\{f_i(\bxs) &\coloneqq \mathsf{b}_i \log(\mathsf{b}_i/(A\bxs)_i)  - \mathsf{b}_i + (A\bxs)_i \Big\}
    \end{align*}
    is the Kullback-Leibler divergence, $A \in \mathbb{R}^{n \times d}_{+}$ is the forward operator of the inverse problem, and $b \in \mathbb{R}_{++}^n$ is the measurements vector. It models the maximum likelihood estimation problem where the model is $b \sim \text{Poisson}(A\bxs_*)$, with $\bxs_*$ the true unknown value. Each $f_i$ is convex. The kernel used is $h(\bxs) = -\sum_{i=0}^d \log \xx_i$. Based on \citet{bauschke2017descent}, we have that each function $f_i$ is $L$-relatively smooth w.r.t. $h$ when $L= \max_i b_i$. In addition, \(\dom h^* = (-\infty, 0)\) which goes beyond the theoretical analysis. In these experiments, we compare a fixed stepsize BSAPA to non variance-reduced BSPPA with both fixed and vanishing stepsizes. 
    
    At each iteration, we solved the proximal subproblem
    \begin{equation*}
        \argmin_{\bxs \in \mathbb{R}_{+}^d} \left\{ f_{i_{k}}(\bxs) 
    - \langle \be_k, \bxs - \bx_{k}\rangle 
    + \tfrac{1}{\alpha_k} D_{h}(\bxs,\bx_{k}) \right\}
    \end{equation*}
    using a gradient descent subroutine, since no closed-form solution is available in general, except when $A$ is diagonal. As this subroutine is employed in both algorithms under comparison, the evaluation remains equitable. 
    Strictly speaking, this procedure corresponds to an approximate minimization of the proximal mapping, and hence the implementation is closer to an inexact variant of the algorithms. 
    A formal treatment of inexactness lies beyond the scope of this work. 
    Nevertheless, this approximation does not affect our empirical findings, which still clearly demonstrate the advantage of variance reduction. These findings encourage future extensions of the theoretical analysis toward inexact variants.

    For the first experiment, we used a uniformly generated matrix. We set $n=500, d = 100$. 
    We couldn't choose the sequence \((G_k)_{k \in \N}\), as the gain function for the kernel is unknown exactly. Following previous work \citep{pmlr-v139-dragomir21a} and their numerical experiments, we therefore assume \((G_k)_{k \in \N}\) is constant. To account for this uncertainty and also the fact that $L$ is quite conservative, we consider several values for the stepsize in our experiments. 
    Here, we present the results for two values; 
    see Figure~\myref{fig:tomo}. 
    For the complete range of stepsizes, see Appendix~\myref{app:figures}, where all additional experimental results are provided.
    As expected, the results show that with vanishing stepsize (green curve), SPPA is slower that BSAPA (blue curve), and even plateaus quite early. With constant stepsize (orange curve), SPPA has the same rate as BSAPA (interpolation). However, while the stepsizes increase we observe that SPPA is less stable than BSAPA. More stability can be added to SPPA by using average iterates $(\bbar{\bx})_{k \in \N}$. This instability of SPPA, in the interpolation case, is vastly due to the approximate proximal mapping. Indeed when $A$ is diagonal, a closed form exists for the proximal mapping and the experiments don't exhibit this instability for SPPA; see Figure~\myref{fig:tomodiag}.
    Nevertheless, the experiments shows the advantage and stability of variance reduction, even in the interpolation case, when the proximal mapping can only be approximated.
    
    To cover the non-interpolation case, we did some experiments on tomographic reconstruction problem, where $A$ is a discrete Radon transform, that projects the image $\bxs$ in $n$ different angles $(\theta_1, \ldots, \theta_n)$, with $n = 90$. The optimization problem is 
    \begin{align*}
      \min_{\bxs \in \R^d_+}\frac{1}{n}\sum_{i=1}^n \Big\{f_i(\bxs) \coloneqq &\mathsf{b}_i \log(\mathsf{b}_i/(A\bxs)_{\theta_i}) - \mathsf{b}_i + (A\bxs)_{\theta_i}\Big\}.
    \end{align*}
    The image used is the Shepp-Logan phantom. Also here, the vanishing stepsize SPPA is slower than BSAPA; see Figure~\myref{fig:tomo}. 
    Furthermore, the results show, that constant stepsize SPPA oscillates around the minimum while BSAPA is stable; expected behaviors for non-interpolation cases. These oscillations are low for small stepsizes and high for bigger ones. 
    We also ran this for different stepsizes because $G_k$ is unknown and $L$ is conservative. 

    Finally, in all experiments, BSAPA tends to explode quicker than SPPA when the stepsize is big enough.
    This is seen from the theory where SPPA does not have a bound on the stepsize; see Theorems~\myref{theo:bsppa} and~\myref{theo:bsppa2}. However bigger stepsizes increase the term which is related to the variance in those theorems, hence an increase in oscillations and no convergence. But until that stepsize threshold imposed by the theory, for non-interpolation cases, BSAPA remains more stable and ensure convergence whereas SPPA does not.

\section{Conclusion}
    In this paper, we conducted a unified analysis of variance reduction for the Bregman stochastic proximal point algorithm (BSPPA). We proposed a generic variance-reduced algorithm based on BSPPA and prove convergence rates. From that general algorithm and analysis, we derived several new variance-reduced BSPPAs, employing SVRG- and SAGA-like techniques, and their corresponding convergence rates. More specifically, under the relative smoothness assumption, we prove sublinear and linear rates for convex and relatively strongly convex functions, respectively. Our general analysis can also recover the previously studied vanilla BSPPA with its standard rates.  Furthermore, the unified theoretical results extend seamlessly to variance-reduced SGD with Bregman distance. Although not fully captured by the theory, the experiments demonstrated the advantages of variance reduction and highlighted the need for further refinement of the theoretical analysis. For future work, we will focus on extending this work to nonsmooth functions and/or to inexact proximal mapping variants.

\runinsubsectionstar{Acknowledgements}
P. Ochs acknowledges funding by the German Research Foundation for the project DFG Grant OC 150/3-1. The majority of this work was done while C. Traor\'e was a postdoc at Saarland University. He acknowledges the support of Occitanie region, the European Regional Development Fund (ERDF), and the French government, through the France 2030 project managed by the National Research Agency (ANR) with the reference number ``ANR-22-EXES-0015''.

\appendix
\section{Bregman SVRP (BSVRP)}\label{sect:bsvrpsect}
Our analysis can also be adapted to the original SVRG-like variance reduction, i.e. a Bregman version of SVRP in \citet{traore2024variance}. We call it BSVRP.
\begin{algorithm}[BSVRP]
\label{algo:srvprox2}
Let $m\in \N$, with $m\geq 1$, and $(\xi_s)_{s \in \N}$, $(i_t)_{t \in \N}$ be two independent sequences of i.i.d.~random variables uniformly distributed on $\{0,1,\dots, m-1\}$ and $\{1,\dots, n\}$ respectively. 
Let $\alpha_k > 0$ for every $k \in \N$, and set $\tilde{\bx}_0 \equiv \tilde{\bxs}_0\in \interior\mathcal{C}$.
\begin{equation}
\begin{array}{l}
\text{for}\;s=0,1,\ldots\\
\left\lfloor
\begin{array}{l}
\bx_0 = \tilde{\bx}_{s} \\
\text{for}\;k=0,\dots, m-1\\[0.7ex]
\left\lfloor
\begin{array}{rl}
\bx_{k+1} = \argmin_{\bxs \in \HH}\Big\{f_{i_{k}}(\bxs) 
-\scalarp{\nabla f_{i_{sm+k}}(\tilde{\bx}_{s}) - \nabla F(\tilde{\bx}_{s}),\bxs-\bx_{k}} +\frac{1}{\alpha_k}D_{h}(\bxs,\bx_{k}) \Big\} \\
\end{array}
\right. \\
\tilde{\bx}_{s+1} = \sum_{k=0}^{m-1} \delta_{k,\xi_s}\bx_{k}, 
\text{ or }
{\tilde{\bx}_{s+1} = \frac{1}{m} \sum_{k=0}^{m-1} \bx_{k},}
\end{array}
\right.
\end{array} \nonumber
\end{equation}
\end{algorithm}
where $\delta_{k,h}$ is the Kronecker symbol. It follows that $\be_k = \nabla f_{i_{sm+k}}(\tilde{\bx}_{s}) - \nabla F(\tilde{\bx}_{s})$. Moreover, setting
 $\Fk_{s,k} = \sigma(\xi_0, \dots, \xi_{s-1, }i_0, \dots, i_{sm+k-1}),$
we have that $\tilde{\bx}_{s}, \bxs_*$,
and $\bx_k$ are $\Fk_{s,k}$-measurable and $i_{sm + k}$ is independent of $\Fk_{s,k}$. 
Let $\zeta_k = -2\alpha_k (\nabla f_{i_k}(\bxs_{*}) - \nabla f_{i_k}(\tilde{\bx}_{s}))$. That means that $\EE[\zeta_k\,\vert\, \Fk_{s,k}] = 2\alpha_k \nabla F(\tilde{\bx}_{s})$.
\begin{lemma}\label{lem:20220927a}
Suppose that Assumptions~\eqref{ass:a13}, \myref{ass:a14} and~\eqref{ass:intmin} hold. We also assume that there exists a non-increasing sequence $(G_{\ell})_{\ell \in \N}$ such that, for all $i \in [n]$,
 \begin{align}\label{gain6}
        G_{sm + k} &\geq G \left(\nabla h(\bx_k), \nabla h(\bx_k), \frac{1}{L_{}} (\nabla f_i(\bx_k) - \nabla f_i(\bxs_*)) \right),  \\
        \label{gain7} G_{sm+k} &\geq G \Big(\nabla h(\bx_k) - \EE[\zeta_k\,\vert\, \Fk_{s,k}], \nabla h(\tilde{\bx}_{s}),
        \frac{1}{L_{}} (\nabla f_i(\tilde{\bx}_{s}) - \nabla f_i(\bxs_*)) \Big),
    \end{align}
for any $s \in \N$, \(k \in \{0,1,\cdots, m-1\}\) and $(\bx_k)_{k \in [m]}$ generated by the inner iteration of Algorithm~\myref{algo:srvprox2}. 

So let $s \in \N$ and let $(\bx_k)_{k \in [m]}$ be the sequence generated by the inner iteration in Algorithm~\myref{algo:srvprox2}. Then, for all $k \in \{0,1,\cdots, m-1\}$,
    \begin{align*}
        \EE[D_h(\bx_{k}, \bz_{k+1})\,\vert\, \Fk_{s,k}] 
        &\leq 2 L \alpha_k^2 G_{sm + k} D_F(\bx_{k}, \bxs_{*}) +  \alpha_k^2 G_{sm + k} \sigma_k^2 
        - \frac{1}{2}D_{h^*}\left(\nabla h(\bx_{k}), \nabla h(\bx_{k}) + 2\alpha_k\nabla F(\tilde{\bx}_{s})\right) \nonumber, 
    \end{align*}
    where 
    \begin{align*}
      \sigma_k^2 = 2 L^2 &\EE \Bigg[ D_{h^*} \Big(\nabla h(\tilde{\bx}_{s}) - \frac{1}{L} (\nabla f_{i_k}(\tilde{\bx}_{s})
        - \nabla f_{i_k}(\bxs_{*})), \nabla h(\tilde{\bx}_{s}) \Big)\,\vert\, \Fk_{s,k}\Bigg],
    \end{align*}
    and, trivially, $\EE\big[\sigma_{k+1}^2 \,\vert\,\Fk_{s,k}\big] = \EE\big[\sigma_k^2 \,\vert\, \Fk_{s,k}\big].$
\end{lemma}
\begin{remark}\label{rmk:constsvrg}
    By Lemma~\myref{lem:20220927a}, Assumptions~\myref{ass:variance} are satisfied with $A_k = 2LG_{sm + k}$, $B_k = G_{sm + k}$, $N_k = - (1/2\alpha_k^2) D_{h^*}\left(\nabla h(\bx_{k}), \nabla h(\bx_{k}) + 2\alpha_k\nabla F(\tilde{\bx}_{s})\right)$, $\rho = C_k = 0$. For simplicity, we set $A_k = A = 2LG_{sm + k} = 2LG_0$, $B_k = B = G_{sm + k} = G_0$, $\alpha_k = \alpha$.
\end{remark}
\begin{theorem}[$F$ is $\mu$-relatively strongly convex]\label{theo:bsvrp} We assume~\eqref{ass:a12}  and that 
$\alpha < \frac{1}{4LG_0}$.
Then, for all $s \in \N$ and under the assumptions of Lemma~\myref{lem:20220927a},
    \begin{align*}
        &\EE[D_F(\tilde{\bx}_{s+1}, \bxs_{*})]
        \leq \Bigg(\frac{1}{\gamma_h\mu\alpha\left(1-2L\alpha G_0\right)m} + \frac{2L\alpha G_0}{1-2L\alpha G_0}\Bigg)\EE[D_{F}(\tilde{\bx}_{s}, \bxs_*)]. 
    \end{align*}
\end{theorem}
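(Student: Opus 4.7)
The plan is to derive a per-iteration Lyapunov recursion for $D_h(\bxs_*, \bx_k)$ within the inner loop indexed by $k = 0,\ldots,m-1$, sum/telescope it, and then close the bound at the outer level using $\mu$-relative strong convexity of $F$. The generic Theorem~\ref{theo:strongconvex} does not apply directly here because, by the remark before the statement, $\rho = 0$ for BSVRP; however, the SVRG-style variance proxy $\sigma_k^2$ depends only on $\tilde{\bx}_s$ and is therefore constant along the inner loop, which is what makes direct telescoping feasible.

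First, starting from the optimality condition defining $\bx_{k+1}$ in Algorithm~\ref{algo:srvprox2}, combined with the three-points identity (Lemma~\ref{lem:20250424a}) applied at $(\bxs_*, \bx_{k+1}, \bx_k)$ and convexity of $f_{i_k}$, I would obtain
\begin{equation*}
    D_h(\bxs_*, \bx_{k+1}) \leq D_h(\bxs_*, \bx_k) - D_h(\bx_{k+1}, \bx_k) + \alpha\bigl(f_{i_k}(\bxs_*) - f_{i_k}(\bx_{k+1})\bigr) + \alpha\langle \be_k, \bx_{k+1} - \bxs_*\rangle.
\end{equation*}
Taking $\EE[\cdot\,\vert\,\Fk_{s,k}]$ and splitting the last inner product as $\langle \be_k, \bx_{k+1} - \bx_k\rangle + \langle \be_k, \bx_k - \bxs_*\rangle$, the second part vanishes since $\EE[\be_k\,\vert\,\Fk_{s,k}] = 0$. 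The remaining cross-term against $\bx_{k+1} - \bx_k$ must then be absorbed into the negative term $-D_h(\bx_{k+1}, \bx_k)$ by a Bregman Fenchel--Young argument, the residual slack being precisely the quantity $D_h(\bx_k, \bz_{k+1})$ that Lemma~\ref{lem:20220927a} controls via the virtual explicit iterate $\bz_{k+1}$.

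Inserting Lemma~\ref{lem:20220927a} and using the bound $\sigma_k^2 \leq 2L\, D_F(\tilde{\bx}_s, \bxs_*)$, which follows from relative $L$-smoothness of each $f_i$ together with the conjugate identity $D_h(\bxs, \bys) = D_{h^*}(\nabla h(\bys), \nabla h(\bxs))$ (Remark after Definition of $\gamma_h$), this produces a clean recursion of the form
\begin{equation*}
    \EE[D_h(\bxs_*, \bx_{k+1})\,\vert\,\Fk_{s,k}] \leq D_h(\bxs_*, \bx_k) - \alpha(1 - 2L\alpha G_0)\,\EE[D_F(\bx_k, \bxs_*)\,\vert\,\Fk_{s,k}] + 2 L \alpha^2 G_0\, D_F(\tilde{\bx}_s, \bxs_*).
\end{equation*}
Taking full expectation, telescoping over $k = 0, \ldots, m-1$ with $\bx_0 = \tilde{\bx}_s$, and dropping the nonnegative terminal distance $\EE[D_h(\bxs_*, \bx_m)]$, I arrive at
\begin{equation*}
    \alpha(1 - 2L\alpha G_0)\sum_{k=0}^{m-1}\EE[D_F(\bx_k, \bxs_*)] \leq \EE[D_h(\bxs_*, \tilde{\bx}_s)] + 2L\alpha^2 G_0\, m\, \EE[D_F(\tilde{\bx}_s, \bxs_*)].
\end{equation*}
To conclude, I would combine $\mu$-relative strong convexity of $F$ (with $\nabla F(\bxs_*) = 0$) with the symmetry coefficient to bound $\EE[D_h(\bxs_*, \tilde{\bx}_s)] \leq (\gamma_h \mu)^{-1}\EE[D_F(\tilde{\bx}_s, \bxs_*)]$, and use the two possible definitions of $\tilde{\bx}_{s+1}$ (either uniform sampling of $(\bx_k)_{k=0}^{m-1}$, which is exact, or their arithmetic mean, which gives the inequality by convexity of $D_F(\cdot,\bxs_*)$) to get $\EE[D_F(\tilde{\bx}_{s+1}, \bxs_*)] \leq \tfrac{1}{m}\sum_{k=0}^{m-1}\EE[D_F(\bx_k, \bxs_*)]$. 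Dividing through by $\alpha(1 - 2L\alpha G_0)\,m$ then yields exactly the announced contraction factor.

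The hard part will be the clean derivation of the per-iteration recursion in the Bregman setting, namely the absorption of the noise cross-term $\alpha\langle \be_k, \bx_{k+1} - \bx_k\rangle$ into $-D_h(\bx_{k+1}, \bx_k)$ so that the leftover piece can be routed through Lemma~\ref{lem:20220927a}. This is precisely where Assumption~\ref{ass:a14} enters and where the gain $G_0$ is introduced, which explains the stepsize restriction $\alpha < 1/(4LG_0)$: for both summands of the contraction to lie below $1$ one needs $2L\alpha G_0 < 1 - 2L\alpha G_0$. The rest (telescoping and the relative-strong-convexity step) is routine.
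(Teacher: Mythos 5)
Your proposal is correct and follows essentially the same route as the paper: the per-iteration inequality you re-derive (optimality condition, three-points identity, convexity, absorption of the cross term with slack $D_h(\bx_k,\bz_{k+1})$ controlled by Lemma~\ref{lem:20220927a}) is exactly Proposition~\ref{prop:unified} specialized to BSVRP, after which your telescoping over the inner loop, the bound $\sigma_k^2 \le 2L\,D_F(\tilde{\bx}_s,\bxs_*)$, the relative-strong-convexity/symmetry-coefficient step, and the sampling/Jensen identity relating $\sum_{k}D_F(\bx_k,\bxs_*)$ to $m\,\EE[D_F(\tilde{\bx}_{s+1},\bxs_*)]$ coincide with the paper's argument. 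The only cosmetic difference is that the bound on $\sigma_k^2$ is properly obtained from Lemma~\ref{lem:bregsmooth} with $\eta = 1/L$ (rather than from the conjugate identity alone), which is also what the paper invokes.
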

\begin{remark}
    Taking $\displaystyle\alpha < \frac{1}{4LG_0}$
    ensures that $\displaystyle \frac{2L\alpha G_0}{1-2L\alpha G_0} < 1$. Then taking $m$ big enough gives linear rate.
\end{remark}
\section{{Proofs}}\label{app:proof}
\subsection{Fundamental results}
For the proofs, we start with a proposition that constitutes the cornerstone of our analysis. Most of the others results are derived from that proposition.
\begin{proposition}\label{prop:unified}
Suppose that Assumptions~\myref{ass:variance} are verified, Assumption~\eqref{ass:a11} holds, and that the sequence
$(\bx_k)_{k \in \N}$ is generated by Algorithm~\myref{algo:unified}. Then, for all $k \in \N$,
\begin{align}
    \left(1+\beta\alpha_k\right) \EE[D_{h}(\bxs_*, \bx_{k+1})\,\vert\, \Fk_k]  
    &\leq D_{h}(\bxs_*, \bx_{k}) -\alpha_k\left[1-\alpha_k A_k\right]\left(F(\bx_{k})- F(\bxs_*)\right) 
     + \alpha_k^2 B_k \sigma_k^2  + \alpha_k^2 N_{k} \nonumber.
\end{align}
\end{proposition}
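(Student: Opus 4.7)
}

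The plan is to derive a descent-like inequality on $D_h(\bxs_*,\bx_{k+1})$ directly from the optimality conditions of the subproblem, then turn the function-value term $f_{i_k}(\bx_{k+1})-f_{i_k}(\bxs_*)$ that naturally appears into $F(\bx_k)-F(\bxs_*)$ by routing through the virtual explicit sequence $(\bz_{k+1})_{k\in\N}$.

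First, I would write the optimality condition of the proximal update as $\nabla h(\bx_k)-\nabla h(\bx_{k+1})=\alpha_k(\bg_{k+1}-\be_k)$ with $\bg_{k+1}\in\partial f_{i_k}(\bx_{k+1})$, and apply the three-points identity (Lemma~\ref{lem:20250424a}) with triple $(\bxs_*,\bx_{k+1},\bx_k)$ to obtain
\begin{equation*}
D_h(\bxs_*,\bx_{k+1})+D_h(\bx_{k+1},\bx_k)-D_h(\bxs_*,\bx_k)
=\alpha_k\scalarp{\bg_{k+1}-\be_k,\bx_{k+1}-\bxs_*}.
\end{equation*}
Then I would apply Assumption~\ref{ass:a11} (relative strong convexity of $f_{i_k}$) at the pair $(\bxs_*,\bx_{k+1})$ to get $\scalarp{\bg_{k+1},\bx_{k+1}-\bxs_*}\ge f_{i_k}(\bx_{k+1})-f_{i_k}(\bxs_*)+\beta D_h(\bxs_*,\bx_{k+1})$, giving
\begin{equation*}
(1+\beta\alpha_k)D_h(\bxs_*,\bx_{k+1})
\le D_h(\bxs_*,\bx_k)-D_h(\bx_{k+1},\bx_k)-\alpha_k\bigl(f_{i_k}(\bx_{k+1})-f_{i_k}(\bxs_*)\bigr)
+\alpha_k\scalarp{\be_k,\bx_{k+1}-\bxs_*}.
\end{equation*}

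Next comes the key step: splitting $f_{i_k}(\bx_{k+1})-f_{i_k}(\bxs_*)=\bigl(f_{i_k}(\bx_k)-f_{i_k}(\bxs_*)\bigr)-\bigl(f_{i_k}(\bx_k)-f_{i_k}(\bx_{k+1})\bigr)$ and upper-bounding the last bracket by $\scalarp{\bg_k,\bx_k-\bx_{k+1}}$ using convexity of $f_{i_k}$ at $\bx_k$, for $\bg_k\in\partial f_{i_k}(\bx_k)$ chosen so that $\alpha_k\bv_k=\alpha_k(\bg_k-\be_k)=\nabla h(\bx_k)-\nabla h(\bz_{k+1})$. Decomposing $\bg_k=\bv_k+\be_k$ and applying the three-points identity once more (to the triple $(\bx_{k+1},\bx_k,\bz_{k+1})$) would rewrite $\scalarp{\nabla h(\bx_k)-\nabla h(\bz_{k+1}),\bx_k-\bx_{k+1}}$ as $D_h(\bx_k,\bz_{k+1})+D_h(\bx_{k+1},\bx_k)-D_h(\bx_{k+1},\bz_{k+1})$. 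After substituting, the two $\pm D_h(\bx_{k+1},\bx_k)$ terms cancel, the two $\be_k$-inner-products collapse to $\alpha_k\scalarp{\be_k,\bx_k-\bxs_*}$, and dropping the nonpositive $-D_h(\bx_{k+1},\bz_{k+1})$ yields
\begin{equation*}
(1+\beta\alpha_k)D_h(\bxs_*,\bx_{k+1})
\le D_h(\bxs_*,\bx_k)-\alpha_k\bigl(f_{i_k}(\bx_k)-f_{i_k}(\bxs_*)\bigr)
+D_h(\bx_k,\bz_{k+1})+\alpha_k\scalarp{\be_k,\bx_k-\bxs_*}.
\end{equation*}

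Finally, I would take the conditional expectation $\EE[\cdot\,\vert\,\Fk_k]$ using that $\bx_k,\bxs_*$ are $\Fk_k$-measurable and $i_k$ is independent of $\Fk_k$: Assumption~\ref{ass:b1} kills the $\be_k$ inner product and turns $f_{i_k}(\bx_k)-f_{i_k}(\bxs_*)$ into $F(\bx_k)-F(\bxs_*)$, while Assumption~\ref{ass:b2} bounds $\EE[D_h(\bx_k,\bz_{k+1})\,\vert\,\Fk_k]$ by $\alpha_k^2 A_k(F(\bx_k)-F(\bxs_*))+\alpha_k^2 B_k\sigma_k^2+\alpha_k^2 N_k$. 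Collecting the $F(\bx_k)-F(\bxs_*)$ terms produces exactly the factor $-\alpha_k(1-\alpha_k A_k)$ claimed.

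The main obstacle is the second step: the proximal gradient $\bg_{k+1}$ lives at $\bx_{k+1}$, so the natural subgradient inequality gives $f_{i_k}(\bx_{k+1})-f_{i_k}(\bxs_*)$, not $f_{i_k}(\bx_k)-f_{i_k}(\bxs_*)$. The bridging argument through $\bz_{k+1}$ (which is why the virtual explicit iterates were introduced in the paper) is what makes the whole identity collapse cleanly and allows Assumption~\ref{ass:b2} to be applied in its exact stated form; everything else is bookkeeping with two applications of the three-points identity.
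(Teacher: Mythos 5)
Your proposal is correct and follows essentially the same route as the paper's own proof: optimality condition of the implicit step, (relative strong) convexity of $f_{i_k}$, two applications of the three-points identity, bridging through the virtual explicit iterate $\bz_{k+1}$ so that Assumption~\ref{ass:b2} applies, dropping $D_h(\bx_{k+1},\bz_{k+1})\ge 0$, and finishing with conditional expectation and Assumptions~\ref{ass:b1}--\ref{ass:b2}; the only cosmetic difference is that the paper injects the $\beta$-term by folding it into the convex auxiliary function $R_{i_k}$ while you apply the strong-convexity subgradient inequality at $\bx_{k+1}$, which yields the identical intermediate bound. Note a small sign slip in your first displayed identity (the right-hand side should be $\alpha_k\scalarp{\bg_{k+1}-\be_k,\,\bxs_*-\bx_{k+1}}$), which is immaterial since the inequality you state next is the correct consequence.
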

\begin{proof}
    We recall the definition of $\bx_{k+1}$:
\begin{align}
    \bx_{k+1} &=\argmin_{\bxs \in \HH}\Big\{\underbrace{f_{i_{k}}(\bxs)-\scalarp{\be_k,\bxs-\bx_{k}} - \beta D_h(\bxs, \bx_k)}_{=:R_{i_{k}}(\bxs)}  + \left(\frac{1+\beta\alpha_k}{\alpha_k}\right)D_{h}(\bxs,\bx_{k})\Big\} \nonumber \\
    &= \label{eq:20250219c} 
    \nabla h^* \left(\nabla h (\bx_{k}) - \left(\frac{1+\beta\alpha_k}{\alpha_k}\right)^{-1} \br_{k+1} \right), 
\end{align}
 where \(\br_{k+1} \in \partial R_{i_k}(\bx_{k+1}) \) such that~\eqref{eq:20250219c} holds. \\
Let $\bxs \in \mathcal{C}$. The function \(R_{i_{k}}(\bxs)=f_{i_{k}}(\bxs)-\scalarp{\be_k,\bxs-\bx_{k}}- \beta D_h(\bxs, \bx_k)\) is convex, since $f_{i_k}$ is $\beta$-relatively strongly convex w.r.t. $h$.
By convexity of $R_{i_{k}}$, we have:
\begin{align}\label{eq:20220915a}
    \scalarp{\br_{k+1} , \bxs-\bx_{k+1}} &\leq R_{i_{k}}(\bxs)-R_{i_{k}}(\bx_{k+1}) \nonumber \\
    &=f_{i_{k}}(\bxs)-f_{i_{k}}(\bx_{k+1})-\scalarp{\be_k , \bxs-\bx_{k}} + \scalarp{\be_k , \bx_{k+1}-\bx_{k}} \nonumber \\
    &\qquad - \beta D_h(\bxs, \bx_k) + \beta D_h(\bx_{k+1}, \bx_{k}).
\end{align}
Since \(\br_{k+1} =\left(\frac{1+\beta\alpha_k}{\alpha_k}\right)\left(\nabla h(\bx_{k})-\nabla h(\bx_{k+1})\right)\) by~\eqref{eq:20250219c}, it follows from~\eqref{eq:20220915a}:
\begin{align*}
  &\left(\frac{1+\beta\alpha_k}{\alpha_k}\right)\scalarp{\nabla h(\bx_{k})-\nabla h(\bx_{k+1}) , \bxs-\bx_{k+1}} \\
  &\leq f_{i_{k}}(\bxs)-f_{i_{k}}(\bx_{k+1})-\scalarp{\be_k , \bxs-\bx_{k}} + \scalarp{\be_k , \bx_{k+1}-\bx_{k}} \\
  &\qquad - \beta D_h(\bxs, \bx_k) + \beta D_h(\bx_{k+1}, \bx_{k}).
\end{align*}
By using the three points identity in Lemma~\myref{lem:20250424a},
\(\scalarp{\nabla h(\bx_{k})-\nabla h(\bx_{k+1}) , \bxs-\bx_{k+1}}=D_{h}(\bx_{k+1},\bx_{k}) + D_{h}(\bxs, \bx_{k+1}) - D_{h}(\bxs, \bx_{k})\),
in the previous inequality, we find
\begin{align}
\label{eq:20220915f}
-\scalarp{\be_k, \bx_{k+1}-\bx_{k}} + f_{i_{k}}(\bx_{k+1})- f_{i_{k}}(\bxs) + \frac{1}{\alpha_k}D_{h}(\bx_{k+1},\bx_{k}) &\leq
         \frac{1}{\alpha_k} D_{h}(\bxs, \bx_{k}) - \frac{\left(1+\beta\alpha_k\right)}{\alpha_k} D_{h}(\bxs, \bx_{k+1}) \nonumber \\
         &\qquad - \scalarp{\be_k , \bxs-\bx_{k}}.
\end{align}
We recall that the explicit direction $\bv_k$ can be rewritten as 
\begin{align}\label{eq:10022025b}
  \bv_k = \frac{1}{\alpha_k} \left(\nabla h(\bx_k) - \nabla h(\bz_{k+1})\right).
\end{align} 
Using \eqref{eq:10022025c}, we can lower bound the left hand side as follows:
\begin{align}
-\scalarp{\be_k , \bx_{k+1}-\bx_{k}} + f_{i_{k}}(\bx_{k+1})- f_{i_{k}}(\bxs) + \frac{1}{\alpha_k}D_{h}(\bx_{k+1},\bx_{k}) &= -\scalarp{\be_k , \bx_{k+1}-\bx_{k}} + f_{i_{k}}(\bx_{k+1})-f_{i_{k}}(\bx_{k}) \nonumber \\
&\qquad + \frac{1}{\alpha_k}D_{h}(\bx_{k+1},\bx_{k}) + f_{i_{k}}(\bx_{k})- f_{i_{k}}(\bxs) \nonumber\\
& \geq -\scalarp{\be_k , \bx_{k+1}-\bx_{k}} + \scalarp{\bg_{k} ,\bx_{k+1}-\bx_{k}} \nonumber \\
&\qquad + \frac{1}{\alpha_k}D_{h}(\bx_{k+1},\bx_{k}) +f_{i_{k}}(\bx_{k})- f_{i_{k}}(\bxs) \nonumber \\
&= \scalarp{-\be_k + \bg_{k}, \bx_{k+1}-\bx_{k}} +  \frac{1}{\alpha_k}D_{h}(\bx_{k+1},\bx_{k}) \nonumber \\
&\qquad +f_{i_{k}}(\bx_{k})- f_{i_{k}}(\bxs) \nonumber \\
&= \scalarp{\bv_k , \bx_{k+1}-\bx_{k}} +  \frac{1}{\alpha_k}D_{h}(\bx_{k+1},\bx_{k}) \nonumber \\
&\qquad + f_{i_{k}}(\bx_{k})- f_{i_{k}}(\bxs) \nonumber,
\end{align}
where in the first inequality, we used convexity of $f_i$, for all $i \in [n]$. Using the definitions of $\bv_k$ in~\eqref{eq:10022025b}, we can write
\begin{align}
-\scalarp{\be_k , \bx_{k+1}-\bx_{k}} + f_{i_{k}}(\bx_{k+1})- f_{i_{k}}(\bxs) + \frac{1}{\alpha_k}D_{h}(\bx_{k+1},\bx_{k})
&=  \frac{1}{\alpha_k} \scalarp{ \nabla h(\bx_k) - \nabla h(\bz_{k+1}) , \bx_{k+1}-\bx_{k}} \nonumber \\
&\qquad +  \frac{1}{\alpha_k}D_{h}(\bx_{k+1},\bx_{k}) + f_{i_{k}}(\bx_{k})- f_{i_{k}}(\bxs) \nonumber \\
&\label{eq:20220915d}= -\frac{1}{\alpha_k}D_{h}(\bx_{k},\bz_{k+1}) +  \frac{1}{\alpha_k}D_{h}(\bx_{k+1},\bz_{k+1}) \nonumber\\
&\qquad + f_{i_{k}}(\bx_{k})- f_{i_{k}}(\bxs),
\end{align}
where the last inequality uses the three points inequality~\eqref{lem:20250424a}. 
By using~\eqref{eq:20220915d} in~\eqref{eq:20220915f} and $D_{h}(\bx_{k+1},\bz_{k+1}) \geq 0$, we obtain
\begin{align}
\label{eq:20220915e}
-\frac{1}{\alpha_k}D_{h}(\bx_{k},\bz_{k+1})  + f_{i_{k}}(\bx_{k})- f_{i_{k}}(\bxs) 
&\leq \frac{1}{\alpha_k} D_{h}(\bxs, \bx_{k}) - \frac{\left(1+\beta\alpha_k\right)}{\alpha_k} D_{h}(\bxs, \bx_{k+1}) \nonumber \\
&\qquad - \scalarp{\be_k , \bxs-\bx_{k}}.
\end{align}
Now, define $\EE_k[\cdot] = \EE[\cdot \,\vert\, \Fk_k]$,
where $\Fk_k$ is defined in Assumptions~~\myref{ass:variance} and is such that $\bx_k$ is $\Fk_k$-measurable and $i_k$ is independent of $\Fk_k$. Thus, taking the conditional expectation in~\eqref{eq:20220915e} and rearranging the terms, we have
\begin{align*}
     \left(1+\beta\alpha_k\right) \EE_k[D_{h}(\bxs, \bx_{k+1})] &\leq D_{h}(\bxs, \bx_{k})  - \alpha_k\EE_{k}\big[f_{i_{k}}(\bx_{k})- f_{i_{k}}(\bxs)\big]  + \EE_{k}[D_{h}(\bx_{k},\bz_{k+1})]\\
     &= D_{h}(\bxs, \bx_{k}) -\alpha_k (F(\bx_{k})- F(\bxs)) + \EE_{k}[D_{h}(\bx_{k},\bz_{k+1})].
\end{align*}
 Replacing $\bxs$ by $\bxs_*$
 and using Assumption~\eqref{ass:b2}, we get
\begin{align}
    \left(1+\beta\alpha_k\right) \EE_{k}[D_{h}(\bxs_*, \bx_{k+1})] &\leq D_{h}(\bxs_*, \bx_{k}) -\alpha_k(F(\bx_{k})- F(\bxs_*)) \nonumber \\
     &\qquad + \left(\alpakf \left (F(\bx_k) - F(\bxs_*)  \right) + \alpaks \sigma_k^2 + \alpha_k^2 N_{k}\right) \nonumber \\
     &= D_{h}(\bxs_*, \bx_{k}) 
     + \alpha_k^2 N_{k} \nonumber\\
     &\qquad -\alpha_k(1-\alpakff)(F(\bx_{k})- F(\bxs_*)) + \alpaks \sigma_k^2.  \nonumber \qedhere
\end{align}
\end{proof}
\begin{remark}\label{rmk:sgd}
We show in the next proposition that the same result in Proposition~\myref{prop:unified} with \(\beta = 0\) stands for Bregman SGD with the generic variance reduction term $\be_k$. As a consequence, since all the variance reduced results of the paper stem from Proposition~\myref{prop:unified} with \(\beta = 0\), they are also true for Bregman SGD with the same variance reduction techniques.
\end{remark}
\begin{proposition}[SGD case]\label{prop:unifiedexplicit}
Suppose that Assumptions~\myref{ass:variance} are verified, and that the sequence
$(\bx_k)_{k \in \N}$ is generated by the explicit version of Algorithm~\myref{algo:unified},i.e., SGD with the generic variance reduction term~\eqref{eq:sgdvr}. Then, for all $k \in \N$,
\begin{align}
     \EE[D_{h}(\bxs_*, \bx_{k+1})\,\vert\, \Fk_k] &\coloneqq \EE[D_{h}(\bxs_*, \bz_{k+1})\,\vert\, \Fk_k] \nonumber \\      &\leq D_{h}(\bxs_*, \bx_{k}) -\alpha_k\left[1-\alpha_k A_k\right]\left(F(\bx_{k})- F(\bxs_*)\right) 
     + \alpha_k^2 B_k \sigma_k^2  + \alpha_k^2 N_{k} \nonumber.
\end{align}
\end{proposition}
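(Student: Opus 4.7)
The plan is to mirror the proof of Proposition~\ref{prop:unified}, replacing the implicit proximal step by the explicit mirror-descent update~\eqref{eq:sgdvr}; correspondingly, the subgradient $\bg_k \in \partial f_{i_k}(\bx_k)$ available at the known iterate $\bx_k$ takes the place of the implicit subgradient at $\bx_{k+1}$ used in Proposition~\ref{prop:unified}. The overall skeleton---optimality condition, three-points identity, subgradient inequality from Assumption~\ref{ass:a11}, conditional expectation via Assumption~\ref{ass:b1}, and Assumption~\ref{ass:b2} for $\EE_k[D_h(\bx_k, \bz_{k+1})]$---remains unchanged.

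First, I would write the first-order optimality condition for $\bz_{k+1}$ as the minimizer over $\HH$ of $\bxs \mapsto \langle \bv_k, \bxs - \bx_k\rangle + \frac{1}{\alpha_k} D_h(\bxs, \bx_k)$ and combine it with Lemma~\ref{lem:20250424a} to obtain the standard prox-inequality
\begin{equation*}
    \tfrac{1}{\alpha_k}D_h(\bxs, \bz_{k+1}) \leq \tfrac{1}{\alpha_k}D_h(\bxs, \bx_k) - \tfrac{1}{\alpha_k}D_h(\bz_{k+1}, \bx_k) + \langle \bv_k, \bxs - \bz_{k+1}\rangle,\quad \forall\, \bxs\in\HH.
\end{equation*}
I would then split $\langle \bv_k, \bxs - \bz_{k+1}\rangle = \langle \bg_k - \be_k, \bxs - \bx_k\rangle + \langle \bv_k, \bx_k - \bz_{k+1}\rangle$ and use the explicit-step identity $\alpha_k \bv_k = \nabla h(\bx_k) - \nabla h(\bz_{k+1})$ together with the identity $\langle \nabla h(\bx_k) - \nabla h(\bz_{k+1}), \bx_k - \bz_{k+1}\rangle = D_h(\bx_k, \bz_{k+1}) + D_h(\bz_{k+1}, \bx_k)$ to simplify; the two occurrences of $D_h(\bz_{k+1}, \bx_k)$ then cancel.

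Second, bounding $\langle \bg_k, \bxs - \bx_k\rangle$ via the $\beta$-relative strong convexity of $f_{i_k}$, specializing to $\bxs = \bxs_*$, taking conditional expectation (which kills $\be_k$ by Assumption~\ref{ass:b1} and collapses the $f_{i_k}$-differences to $F$-differences), and invoking Assumption~\ref{ass:b2} to control $\EE_k[D_h(\bx_k, \bz_{k+1})]$ yields a bound of the form
\begin{equation*}
    \EE_k[D_h(\bxs_*, \bz_{k+1})] \leq (1 - \beta\alpha_k)\, D_h(\bxs_*, \bx_k) - \alpha_k (1 - \alpha_k A_k) (F(\bx_k) - F(\bxs_*)) + \alpha_k^2 B_k \sigma_k^2 + \alpha_k^2 N_k .
\end{equation*}

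The main obstacle is recasting this as $(1+\beta\alpha_k) \EE_k[D_h(\bxs_*, \bz_{k+1})]$ on the left-hand side in place of the $(1-\beta\alpha_k)$ multiplier on $D_h(\bxs_*, \bx_k)$ on the right that emerges naturally from the explicit analysis. The reconciliation rests on the elementary bound $(1+\beta\alpha_k)(1-\beta\alpha_k) = 1 - \beta^2 \alpha_k^2 \leq 1$: multiplying through by $(1+\beta\alpha_k)$ absorbs the strong-convexity factor into the LHS in the stated form, at the mild cost of enlarging the noise and error coefficients by $(1+\beta\alpha_k)$, which is harmless in the subsequent applications (where $\beta\alpha_k$ is small and typically $N_k \leq 0$).
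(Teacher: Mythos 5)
Your derivation up to the intermediate bound
\begin{equation*}
\EE[D_h(\bxs_*,\bz_{k+1})\,\vert\,\Fk_k]\;\le\;(1-\beta\alpha_k)\,D_h(\bxs_*,\bx_k)-\alpha_k(1-\alpha_kA_k)\bigl(F(\bx_k)-F(\bxs_*)\bigr)+\alpha_k^2B_k\sigma_k^2+\alpha_k^2N_k
\end{equation*}
is sound: the three-points identity, the cancellation of $D_h(\bz_{k+1},\bx_k)$, the relative strong convexity applied at $\bx_k$, and Assumptions~\ref{ass:b1}--\ref{ass:b2} are all used correctly. The gap is entirely in the final ``reconciliation''. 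Multiplying your inequality by $(1+\beta\alpha_k)$ and using $(1+\beta\alpha_k)(1-\beta\alpha_k)\le 1$ gives
\begin{equation*}
(1+\beta\alpha_k)\EE[D_h(\bxs_*,\bz_{k+1})\,\vert\,\Fk_k]\le D_h(\bxs_*,\bx_k)-(1+\beta\alpha_k)\alpha_k(1-\alpha_kA_k)\bigl(F(\bx_k)-F(\bxs_*)\bigr)+(1+\beta\alpha_k)\alpha_k^2\bigl(B_k\sigma_k^2+N_k\bigr),
\end{equation*}
which is \emph{not} the claimed inequality: the terms $\alpha_k^2B_k\sigma_k^2$ and $\alpha_k^2N_k$ come out inflated by $(1+\beta\alpha_k)$ (and the $F$-gap term only helps when $\alpha_kA_k\le1$, which the proposition does not assume). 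To pass from your bound to the stated one you would need $\alpha_k^2(B_k\sigma_k^2+N_k)\le\alpha_k(1-\alpha_kA_k)(F(\bx_k)-F(\bxs_*))$, which is not available from Assumptions~\ref{ass:variance} ($\sigma_k^2\ge0$ and $N_k$ is not sign-constrained there). So what you prove is a weaker, differently-shaped inequality, not Proposition~\ref{prop:unifiedexplicit}; the remark that the extra factor is ``harmless downstream'' may be true for the specific instantiations (where $N_k\le0$ and $\alpha_k$ is small), but it does not constitute a proof of the statement.

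The paper never produces the $(1-\beta\alpha_k)$ factor at all, so it never faces this reconciliation. It rewrites the explicit step as
\begin{equation*}
\bz_{k+1}=\argmin_{\bxs}\Bigl\{R_{i_k}(\bxs)+\tfrac{1+\beta\alpha_k}{\alpha_k}D_h(\bxs,\bx_k)\Bigr\},\qquad R_{i_k}(\bxs)=\langle \bg_k-\be_k,\bxs-\bx_k\rangle-\beta D_h(\bxs,\bx_k),
\end{equation*}
i.e.\ it splits $\tfrac1{\alpha_k}D_h(\cdot,\bx_k)=\tfrac{1+\beta\alpha_k}{\alpha_k}D_h(\cdot,\bx_k)-\beta D_h(\cdot,\bx_k)$ and folds the $-\beta D_h$ term into the objective. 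The subgradient-type inequality for $R_{i_k}$ combined with the three-points identity, now carrying the coefficient $\tfrac{1+\beta\alpha_k}{\alpha_k}$, makes $-\tfrac{1+\beta\alpha_k}{\alpha_k}D_h(\bxs_*,\bz_{k+1})$ appear \emph{structurally} on the right-hand side, while all remaining terms are exactly as in your computation; no post-multiplication is needed. (Your difficulty does point at the genuinely delicate spot: in the explicit case $R_{i_k}$ contains the linearization $\langle\bg_k,\cdot-\bx_k\rangle$ rather than $f_{i_k}$ itself, so the convexity claim for $R_{i_k}$ used at that step deserves scrutiny — but the way to the stated $(1+\beta\alpha_k)$ form is through that decomposition, not through the $(1-\beta^2\alpha_k^2)\le1$ trick, which cannot close the argument.)
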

\begin{proof}
    We recall the definition of $\bx_{k+1}\coloneqq\bz_{k+1}$:
\begin{align}
    \bx_{k+1} &=\argmin_{\bxs \in \HH}\Big\{\underbrace{\scalarp{\bg_k, \bxs - \bx_k}-\scalarp{\be_k,\bxs-\bx_{k}}}_{=:R_{i_{k}}(\bxs)}  + \frac{1}{\alpha_k}D_{h}(\bxs,\bx_{k})\Big\} \nonumber \\
    &= \label{eq:20250219c2} 
    \nabla h^* \left(\nabla h (\bx_{k}) - \alpha_k \br_{k+1} \right), 
\end{align}
 where $\bg_k \in \partial f_{i_k}(\bx_k) $ and \(\br_{k+1} = \nabla R_{i_k}(\bx_{k+1}) \) such that \eqref{eq:20250219c2} holds. \\
Let $\bxs \in \mathcal{C}$. We have by convexity of \(R_{i_k}\):
\begin{align}\label{eq:20220915a2}
    \scalarp{\br_{k+1} , \bxs-\bx_{k+1}} &\leq R_{i_{k}}(\bxs)-R_{i_{k}}(\bx_{k+1}) \nonumber \\
    &=\scalarp{\bg_k, \bxs - \bx_k}-\scalarp{\bg_k, \bx_{k+1} - \bx_k} \nonumber \\
    &\qquad -\scalarp{\be_k , \bxs-\bx_{k}} + \scalarp{\be_k , \bx_{k+1}-\bx_{k}}.
\end{align}
Since \(\br_{k+1} = \frac{1}{\alpha_k} \left(\nabla h(\bx_{k})-\nabla h(\bx_{k+1})\right)\) by~\eqref{eq:20250219c2}, it follows from~\eqref{eq:20220915a2} and the convexity of $f_{i_k}$:
\begin{align}\label{eq:20251010a}
  &\frac{1}{\alpha_k}\scalarp{\nabla h(\bx_{k})-\nabla h(\bx_{k+1}) , \bxs-\bx_{k+1}} \nonumber \\
  &\leq f_{i_{k}}(\bxs) - f_{i_{k}}(\bx_k) -\scalarp{\bg_k, \bx_{k+1} - \bx_k}-\scalarp{\be_k , \bxs-\bx_{k}} + \scalarp{\be_k , \bx_{k+1}-\bx_{k}} \nonumber\\
  &= f_{i_{k}}(\bxs) - f_{i_{k}}(\bx_k) -\scalarp{\bv_k, \bx_{k+1} - \bx_k}-\scalarp{\be_k , \bxs-\bx_{k}}.
\end{align}
By using the three points identity in Lemma~\myref{lem:20250424a},
\(\scalarp{\nabla h(\bx_{k})-\nabla h(\bx_{k+1}) , \bxs-\bx_{k+1}}=D_{h}(\bx_{k+1},\bx_{k}) + D_{h}(\bxs, \bx_{k+1}) - D_{h}(\bxs, \bx_{k})\),
in the~\eqref{eq:20251010a}, we find
\begin{align}
\label{eq:20220915f2}
&\scalarp{\bv_k, \bx_{k+1}-\bx_{k}} + f_{i_{k}}(\bx_{k})- f_{i_{k}}(\bxs) + \frac{1}{\alpha_k}D_{h}(\bx_{k+1},\bx_{k})\nonumber \\
&\leq
         \frac{1}{\alpha_k} D_{h}(\bxs, \bx_{k}) - \frac{1}{\alpha_k} D_{h}(\bxs, \bx_{k+1}) - \scalarp{\be_k , \bxs-\bx_{k}}.
\end{align}
We recall that the explicit direction $\bv_k$ can be rewritten as 
\begin{align}\label{eq:10022025b2}
  \bv_k = \frac{1}{\alpha_k} \left(\nabla h(\bx_k) - \nabla h(\bz_{k+1})\right) = \frac{1}{\alpha_k} \left(\nabla h(\bx_k) - \nabla h(\bx_{k+1})\right).
\end{align} 
Using the definitions of $\bv_k$ in~\eqref{eq:10022025b2}, we can express the left hand side as follows:
\begin{align}
\scalarp{\bv_k , \bx_{k+1}-\bx_{k}} + f_{i_{k}}(\bx_{k})- f_{i_{k}}(\bxs) + \frac{1}{\alpha_k}D_{h}(\bx_{k+1},\bx_{k})
&=  \frac{1}{\alpha_k} \scalarp{ \nabla h(\bx_k) - \nabla h(\bx_{k+1}) , \bx_{k+1}-\bx_{k}} \nonumber \\
&\qquad +  \frac{1}{\alpha_k}D_{h}(\bx_{k+1},\bx_{k}) + f_{i_{k}}(\bx_{k})- f_{i_{k}}(\bxs) \nonumber \\
&= -\frac{1}{\alpha_k}D_{h}(\bx_{k},\bx_{k+1}) \nonumber\\
&\label{eq:20220915d2}\qquad + f_{i_{k}}(\bx_{k})- f_{i_{k}}(\bxs).
\end{align}
By using~\eqref{eq:20220915d2} in~\eqref{eq:20220915f2}, we obtain
\begin{align}
\label{eq:20220915e2}
-\frac{1}{\alpha_k}D_{h}(\bx_{k},\bx_{k+1})  + f_{i_{k}}(\bx_{k})- f_{i_{k}}(\bxs) 
&\leq \frac{1}{\alpha_k} D_{h}(\bxs, \bx_{k}) - \frac{1}{\alpha_k} D_{h}(\bxs, \bx_{k+1}) - \scalarp{\be_k , \bxs-\bx_{k}}.
\end{align}
Now, define $\EE_k[\cdot] = \EE[\cdot \,\vert\, \Fk_k]$,
where $\Fk_k$ is defined in Assumptions~~\myref{ass:variance} and is such that $\bx_k$ is $\Fk_k$-measurable and $i_k$ is independent of $\Fk_k$. Thus, taking the conditional expectation in~\eqref{eq:20220915e2} and rearranging the terms, we have
\begin{align*}
    \EE_k[D_{h}(\bxs, \bx_{k+1})] &\leq D_{h}(\bxs, \bx_{k})  - \alpha_k\EE_{k}\big[f_{i_{k}}(\bx_{k})- f_{i_{k}}(\bxs)\big]  + \EE_{k}[D_{h}(\bx_{k},\bx_{k+1})]\\
     &= D_{h}(\bxs, \bx_{k}) -\alpha_k (F(\bx_{k})- F(\bxs)) + \EE_{k}[D_{h}(\bx_{k},\bx_{k+1})].
\end{align*}
 Replacing $\bxs$ by $\bxs_*$
 and using Assumption~\eqref{ass:b2}, we get
\begin{align}
    \EE_{k}[D_{h}(\bxs_*, \bx_{k+1})] &\leq D_{h}(\bxs_*, \bx_{k}) -\alpha_k(F(\bx_{k})- F(\bxs_*)) \nonumber \\
     &\qquad + \left(\alpakf \left (F(\bx_k) - F(\bxs_*)  \right) + \alpaks \sigma_k^2 + \alpha_k^2 N_{k}\right) \nonumber \\
     &= D_{h}(\bxs_*, \bx_{k}) 
     + \alpha_k^2 N_{k} \nonumber\\
     &\qquad -\alpha_k(1-\alpakff)(F(\bx_{k})- F(\bxs_*)) + \alpaks \sigma_k^2.  \nonumber \qedhere
\end{align}
\end{proof}
\subsection{Technical lemmas}
These following technical lemmas are needed in the proofs.\begin{lemma}\label{prop:unified3}
Suppose that Assumptions~\myref{ass:variance} are verified and that the sequence
$(\bx_k)_{k \in \N}$ is generated by Algorithm~\myref{algo:unified} with $(\alpha_k)_{k \in \N}$ a non-decreasing positive real-valued sequence. Let $(M_k)_{k\in \N}$ be a non-increasing positive real-valued sequence. Then, for all $k \in \N$,
\begin{align}
    \frac{1}{\alpha_{k+1}^2}  \EE[D_{h}(\bxs_*, \bx_{k+1})] + M_{k+1}\EE[\sigma_{k+1}^2] 
    &\leq \frac{1}{\alpha_{k}^2} \EE[D_{h}(\bxs_*, \bx_{k})] + (M_k + B_k -\rho M_k)\EE[\sigma_k^2] \nonumber\\
    &\qquad  -\frac{1}{\alpha_{k}}\left(1-\alpha_k(A_k + M_k\ck)\right)\EE[F(\bx_k) - F(\bxs_*)]
    \nonumber\\
    &\qquad 
    + \EE[N_{k} ]. \nonumber
\end{align}
\end{lemma}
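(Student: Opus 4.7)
The plan is to derive this lemma as a direct consequence of Proposition~\ref{prop:unified} combined with the variance recursion in Assumption~\ref{ass:b3}, exploiting the monotonicity assumptions on $(\alpha_k)_{k\in\N}$ and $(M_k)_{k\in\N}$ to glue the two bounds together cleanly.

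First, I would apply Proposition~\ref{prop:unified} with $\beta=0$, which is licit since each $f_i$ is standing-assumption convex (the $\beta=0$ case of Assumption~\ref{ass:a11} in~\eqref{PL condition}). This yields, after taking total expectation by the tower property,
\begin{align}
\EE[D_h(\bxs_*,\bx_{k+1})] \leq \EE[D_h(\bxs_*,\bx_k)] - \alpha_k(1-\alpha_k A_k)\EE[F(\bx_k)-F(\bxs_*)] + \alpha_k^2 B_k \EE[\sigma_k^2] + \alpha_k^2 \EE[N_k].
\end{align}
Dividing through by $\alpha_k^2$ and invoking the fact that $\alpha_{k+1}\geq \alpha_k$ gives $\alpha_k^{-2}\geq \alpha_{k+1}^{-2}$; since $D_h(\bxs_*,\bx_{k+1})\geq 0$ by convexity of $h$, this monotonicity step lets me replace $\alpha_k^{-2}$ by $\alpha_{k+1}^{-2}$ on the left-hand side without reversing the inequality, producing
\begin{align}
\tfrac{1}{\alpha_{k+1}^2}\EE[D_h(\bxs_*,\bx_{k+1})] \leq \tfrac{1}{\alpha_k^2}\EE[D_h(\bxs_*,\bx_k)] - \tfrac{1}{\alpha_k}(1-\alpha_k A_k)\EE[F(\bx_k)-F(\bxs_*)] + B_k \EE[\sigma_k^2] + \EE[N_k].
\end{align}

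Next, I would add $M_{k+1}\EE[\sigma_{k+1}^2]$ to both sides and bound it via Assumption~\ref{ass:b3}, obtaining $M_{k+1}\EE[\sigma_{k+1}^2] \leq M_{k+1}(1-\rho)\EE[\sigma_k^2] + M_{k+1}C_k\EE[F(\bx_k)-F(\bxs_*)]$. Because $(M_k)$ is non-increasing, $1-\rho\geq 0$, and $F(\bx_k)-F(\bxs_*)\geq 0$ (by optimality of $\bxs_*$), the factor $M_{k+1}$ can be upper-bounded by $M_k$ in both resulting terms. This step is the only place where the monotonicity of $(M_k)$ and the non-negativity of $F(\bx_k)-F(\bxs_*)$ enter substantively, so I would flag it carefully.

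Finally, collecting terms, the coefficient on $\EE[\sigma_k^2]$ becomes $B_k + M_k(1-\rho) = M_k + B_k - \rho M_k$, and the two contributions multiplying $\EE[F(\bx_k)-F(\bxs_*)]$ combine into $-\alpha_k^{-1}(1-\alpha_k A_k) + M_k C_k = -\alpha_k^{-1}(1-\alpha_k(A_k + M_k C_k))$, producing exactly the claimed inequality. There is no real obstacle here: the argument is a bookkeeping exercise, and the one subtlety worth stating explicitly in the write-up is the use of the two monotonicity hypotheses to align the time indices on $\alpha_k^{-2}$ and $M_k$ so that the recursion closes with the natural Lyapunov quantity $V_k = \alpha_k^{-2}\EE[D_h(\bxs_*,\bx_k)] + M_k\EE[\sigma_k^2]$.
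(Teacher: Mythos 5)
Your proposal is correct and follows essentially the same route as the paper: apply Proposition~\ref{prop:unified} with $\beta=0$, divide by $\alpha_k^2$ and use the monotonicity of $(\alpha_k)$ and nonnegativity of $D_h$, then invoke Assumption~\ref{ass:b3} together with $M_{k+1}\le M_k$ (the paper bounds $M_{k+1}\sigma_{k+1}^2\le M_k\sigma_{k+1}^2$ before applying \ref{ass:b3}, while you apply \ref{ass:b3} first and then use $M_{k+1}\le M_k$ on the resulting nonnegative terms — an immaterial reordering), and finally collect coefficients exactly as in the paper.
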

\begin{proof}
    Define $\EE_k[\cdot] = \EE[\cdot \,\vert\, \Fk_k]$. It follows from Proposition~\myref{prop:unified} that
    \begin{align}
        \frac{1}{\alpha_{k+1}^2}  \EE_{k}[D_{h}(\bxs_*, \bx_{k+1})] + M_{k+1}\EE_k[\sigma_{k+1}^2] &\leq \frac{1}{\alpha_{k}^2} \EE_{k}[D_{h}(\bxs_*, \bx_{k+1})] + M_k\EE_k[\sigma_{k+1}^2]  \nonumber \\
        &\leq \frac{1}{\alpha_{k}^2} D_{h}(\bxs_*, \bx_{k}) + N_{k} + B_k\sigma_k^2 + M_k\EE_k[\sigma_{k+1}^2] \nonumber\\
        &\qquad -\frac{1}{\alpha_{k}}(1-\alpakff)(F(\bx_{k})- F(\bxs_*)). \nonumber
    \end{align}
    By taking the total expectation, we get
    \begin{align}
        \frac{1}{\alpha_{k+1}^2}  \EE[D_{h}(\bxs_*, \bx_{k+1})] + M_{k+1}\EE[\sigma_{k+1}^2] &\leq \frac{1}{\alpha_{k}^2} \EE[D_{h}(\bxs_*, \bx_{k})] + \EE[N_{k}] + B_k\EE[\sigma_k^2] + M_k\EE[\sigma_{k+1}^2] \nonumber\\
        &\qquad -\frac{1}{\alpha_{k}}(1-\alpakff)\EE[F(\bx_{k})- F(\bxs_*)] \nonumber \\
        &\leq \frac{1}{\alpha_{k}^2} \EE[D_{h}(\bxs_*, \bx_{k})] + \EE[N_{k}] + B_k\EE[\sigma_k^2] \nonumber\\
        &\qquad -\frac{1}{\alpha_{k}}(1-\alpakff)\EE[F(\bx_{k})- F(\bxs_*)] \nonumber \\
        &\qquad + M_k(1-\rho) \EE\left[\sigma_k^2\right] + M_k\ck\EE[F(\bx_k) - F(\bxs_*)] \nonumber \\
        &= \frac{1}{\alpha_{k}^2} \EE[D_{h}(\bxs_*, \bx_{k})] + \EE[N_{k}] + (M_k + B_k -\rho M)\EE[\sigma_k^2] \nonumber\\
        &\qquad -\frac{1}{\alpha_{k}}\left(1-\alpha_k(A_k + M_k\ck)\right)\EE[F(\bx_k) - F(\bxs_*)]. \nonumber \qedhere
    \end{align}
\end{proof}

\begin{lemma}\label{lem:bregsmooth} \emph{\citep[Lemma 3]{pmlr-v139-dragomir21a}.} If a convex function $f$ is $L$-relatively smooth w.r.t. $h$, then for any $\eta \le 
\frac{1}{L}$ and $\bxs, \bys \in \operatorname{int} \mathcal{C}$,
\begin{equation*}
    D_f(\bxs,\bys) \ge \frac{1}{\eta} D_{h^*} \left( \nabla h(\bxs) - \eta (\nabla f(\bxs) - \nabla f(\bys)), \nabla h(\bxs) \right).
\end{equation*}
\end{lemma}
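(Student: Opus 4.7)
The statement is a classical cocoercivity-type inequality for relatively smooth functions, so I would follow a standard variational template adapted to the Bregman setting. The first step is to introduce the auxiliary function $\phi\colon \bus \mapsto f(\bus) - \langle \nabla f(\bys), \bus \rangle$ on $\interior\mathcal{C}$. It inherits convexity and $L$-relative smoothness w.r.t.\ $h$ from $f$ (since $D_\phi = D_f$), and it attains its global minimum at $\bys$ because $\nabla \phi(\bys) = 0$. A direct computation also gives the crucial identity $\phi(\bys) - \phi(\bxs) = -D_f(\bxs, \bys)$.

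Next, I would invoke the descent inequality equivalent to $L$-relative smoothness: for any $\eta \leq 1/L$ and any $\bvs \in \interior\mathcal{C}$,
\begin{equation*}
\phi(\bvs) \leq \phi(\bxs) + \langle \nabla \phi(\bxs), \bvs - \bxs \rangle + \tfrac{1}{\eta} D_h(\bvs, \bxs).
\end{equation*}
Minimizing the right-hand side over $\bvs$ via first-order optimality yields the Bregman gradient point $\bvs_\star$ characterized by $\nabla h(\bvs_\star) = \nabla h(\bxs) - \eta(\nabla f(\bxs) - \nabla f(\bys))$. Since $\bys$ minimizes $\phi$, we have $\phi(\bys) \leq \phi(\bvs_\star)$, so evaluating the inequality at $\bvs = \bvs_\star$ gives $-D_f(\bxs, \bys) \leq \langle \nabla f(\bxs) - \nabla f(\bys), \bvs_\star - \bxs \rangle + \tfrac{1}{\eta} D_h(\bvs_\star, \bxs)$. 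I would then rewrite the inner product using the defining relation $\eta(\nabla f(\bxs) - \nabla f(\bys)) = \nabla h(\bxs) - \nabla h(\bvs_\star)$ together with the symmetric identity $\langle \nabla h(\bxs) - \nabla h(\bvs_\star), \bxs - \bvs_\star \rangle = D_h(\bxs, \bvs_\star) + D_h(\bvs_\star, \bxs)$ (a special case of Lemma~\ref{lem:20250424a}). After cancellation this yields the clean intermediate bound $D_f(\bxs, \bys) \geq \tfrac{1}{\eta} D_h(\bxs, \bvs_\star)$.

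To finish, I would apply the Legendre duality identity $D_h(\bxs, \bvs_\star) = D_{h^*}(\nabla h(\bvs_\star), \nabla h(\bxs))$, recalled in the excerpt right after the definition of $\gamma_h$, and substitute the explicit expression for $\nabla h(\bvs_\star)$ to recover the stated inequality. The main technical subtlety I anticipate is not algebraic but lies in ensuring that the implicit point $\bvs_\star$ actually lies in $\interior \mathcal{C}$, so that every Bregman distance and every appeal to the conjugacy $\nabla h^* = (\nabla h)^{-1}$ is well-posed. This is precisely what the Legendre-kernel assumptions on $h$ are designed to deliver, and one must invoke them to legitimize the implicit minimization step; once that is in place, the rest of the argument is entirely routine.
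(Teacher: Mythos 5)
Your proof is correct. Note that the paper does not prove this lemma at all: it is imported by citation (Dragomir et al., Lemma~3), and your argument is essentially the standard proof of that result — tilt $f$ by $\langle \nabla f(\bys), \cdot\rangle$ so that $\bys$ becomes a global minimizer and $D_\phi = D_f$, evaluate the relative-smoothness descent inequality at the mirror point $\bvs_\star$ with $\nabla h(\bvs_\star) = \nabla h(\bxs) - \eta(\nabla f(\bxs) - \nabla f(\bys))$, cancel via the symmetry identity $\langle \nabla h(\bxs)-\nabla h(\bvs_\star), \bxs-\bvs_\star\rangle = D_h(\bxs,\bvs_\star)+D_h(\bvs_\star,\bxs)$, and finish with $D_h(\bxs,\bvs_\star)=D_{h^*}(\nabla h(\bvs_\star),\nabla h(\bxs))$. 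Two minor remarks: you never actually need $\bvs_\star$ to minimize the right-hand side (only the evaluation at $\bvs_\star$ together with $\phi(\bys)\le\phi(\bvs_\star)$ is used), and the well-posedness caveat you raise ($\bvs_\star\in\interior\mathcal{C}$ so that $\nabla h^*$ and the Bregman distances make sense) is precisely what the paper's standing assumptions on the kernel $h$ and its systematic use of $\nabla h^*$ are meant to guarantee.
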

\begin{lemma}\label{lem:20250304a} \emph{\citep[Lemma 2]{pmlr-v139-dragomir21a}.} Let $\bxs \in \operatorname{int} \mathcal{C}$, and $g_1, g_2 \in \R^d$. Define the points $\bxs_1^+, \bxs_2^+, \bxs^+$ as the unique points satisfying $\nabla h(\bxs_1^+) = \nabla h(\bxs) - g_1$, $\nabla h(\bxs_2^+) = \nabla h(\bxs) - g_2$, $\nabla h(\bxs^+) = \nabla h(\bxs) - \frac{g_1 + g_2}{2}.$
Then 
\begin{align*}
D_h(\bxs, \bxs^+) &\leq \frac{1}{2} \left[D_h(\bxs, \bxs_1^+) + D_h(\bxs, \bxs_2^+)\right] \\
                &= \frac{1}{2} \Big[D_{h^*}\left(\nabla h(\bxs_1^+), \nabla h(\bxs)\right) + D_{h^*}\left(\nabla h(\bxs_2^+), \nabla h(\bxs)\right)\Big].
\end{align*}
For the Euclidean case, i.e. $h = \|\cdot \|^2$, we obtain the standard inequality $\|\frac{g_1 + g_2}{2}\|^2 \leq \frac{1}{2} \left(\|g_1\|^2 + \|g_2\|^2\right)$.
\end{lemma}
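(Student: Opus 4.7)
The plan is to work in the dual picture via the duality identity $D_h(\bxs, \bys) = D_{h^*}(\nabla h(\bys), \nabla h(\bxs))$ (from \citet[Theorem 3.7]{bauschke1997legendre}, already invoked in the paper), and then exploit the fact that a Bregman distance is a convex function of its first argument.

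First I would rewrite each of the three quantities on both sides of the claimed inequality in dual form. By the identity above,
\[
D_h(\bxs, \bxs_i^+) = D_{h^*}\bigl(\nabla h(\bxs_i^+), \nabla h(\bxs)\bigr) = D_{h^*}\bigl(\nabla h(\bxs) - g_i,\, \nabla h(\bxs)\bigr)\quad (i=1,2),
\]
and similarly $D_h(\bxs, \bxs^+) = D_{h^*}\bigl(\nabla h(\bxs) - \tfrac{g_1+g_2}{2},\, \nabla h(\bxs)\bigr)$. This immediately yields the stated equality in the display.

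Next, I would invoke convexity of the map $\bys \mapsto D_{h^*}(\bys, \nabla h(\bxs))$. Since $h^*$ is convex, the Bregman distance $D_{h^*}(\cdot, \bv)$ is convex as a function of its first argument (it equals $h^*(\cdot)$ plus an affine term in the first slot). Observing that the point $\nabla h(\bxs) - \tfrac{g_1+g_2}{2}$ is the midpoint of $\nabla h(\bxs) - g_1$ and $\nabla h(\bxs) - g_2$, applying the midpoint convexity inequality gives
\[
D_{h^*}\!\Bigl(\nabla h(\bxs) - \tfrac{g_1+g_2}{2},\, \nabla h(\bxs)\Bigr) \le \tfrac{1}{2}\Bigl[D_{h^*}(\nabla h(\bxs)-g_1,\nabla h(\bxs)) + D_{h^*}(\nabla h(\bxs)-g_2,\nabla h(\bxs))\Bigr].
\]
Translating back to the primal via the same duality identity produces the desired inequality $D_h(\bxs, \bxs^+) \le \tfrac{1}{2}[D_h(\bxs,\bxs_1^+)+D_h(\bxs,\bxs_2^+)]$.

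There is essentially no hard step here; the only subtlety worth a sentence is the justification that $\nabla h^*$ is well-defined on the arguments $\nabla h(\bxs) - g_i$ so that $\bxs_1^+, \bxs_2^+, \bxs^+$ are well-defined, which is the hypothesis of the lemma (``unique points satisfying'') and follows from $h$ being a Legendre kernel on $\operatorname{int}\mathcal{C}$. The Euclidean special case $h = \tfrac{1}{2}\|\cdot\|^2$ reduces both sides to the usual midpoint inequality $\|\tfrac{g_1+g_2}{2}\|^2 \le \tfrac{1}{2}(\|g_1\|^2+\|g_2\|^2)$, confirming the scaling.
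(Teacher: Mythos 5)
Your proof is correct: dualizing via $D_h(\bxs,\bys)=D_{h^*}(\nabla h(\bys),\nabla h(\bxs))$ and then using convexity of $D_{h^*}(\cdot,\nabla h(\bxs))$ in its first argument (it is $h^*$ plus an affine term) at the midpoint of $\nabla h(\bxs)-g_1$ and $\nabla h(\bxs)-g_2$ gives exactly the claim, and your remark on well-definedness of $\bxs_1^+,\bxs_2^+,\bxs^+$ covers the only delicate point. The paper does not prove this lemma itself but cites it from \citet[Lemma 2]{pmlr-v139-dragomir21a}, and your argument is the standard one underlying that reference, so there is nothing to add.
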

Now we recall the Bregman version  of the following Euclidean variance decomposition for a r.v. $\bx$:
\begin{equation*}
    \EE\|\bx\|^2 = \left\|\EE[\bx]\right\|^2 + \EE\|\bx - \EE[\bx]\|^2.
\end{equation*}
\begin{lemma}[Bregman variance decomposition by {\citep[Theorem 2.3]{pfau2025generalized}}]\label{lem:vardec}
    Let $\bx$ be a random variable on \(\dom h^*\) such that $ \EE[\bx] \in \interior\dom h^*$. Then for any $\bus \in \interior\dom h^*$,
    \begin{equation}
    \EE[D_{h^*}(\bx, \bus)] = D_{h^*}(\EE[\bx], \bus) + \EE[D_{h^*}(\bx, E[\bx])]. 
    \end{equation}
\end{lemma}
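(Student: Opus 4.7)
The plan is to verify the identity by a direct algebraic computation, expanding both sides via the definition $D_{h^*}(\bxs,\bys) = h^*(\bxs) - h^*(\bys) - \langle \nabla h^*(\bys), \bxs - \bys\rangle$ and using linearity of expectation. Since $\bus$ is deterministic, $\EE[D_{h^*}(\bx,\bus)]$ becomes
\begin{equation*}
\EE[h^*(\bx)] - h^*(\bus) - \langle \nabla h^*(\bus),\,\EE[\bx] - \bus\rangle,
\end{equation*}
so the left-hand side is already in a clean form.

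Next, I would expand the right-hand side term by term. The first summand $D_{h^*}(\EE[\bx],\bus)$ is deterministic and, by definition, equals $h^*(\EE[\bx]) - h^*(\bus) - \langle \nabla h^*(\bus),\EE[\bx]-\bus\rangle$. For the second summand $\EE[D_{h^*}(\bx,\EE[\bx])]$, the linear cross term $\langle \nabla h^*(\EE[\bx]),\bx - \EE[\bx]\rangle$ has zero expectation (since $\nabla h^*(\EE[\bx])$ is a constant vector and $\EE[\bx - \EE[\bx]] = 0$), hence this summand reduces to $\EE[h^*(\bx)] - h^*(\EE[\bx])$.

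Summing these two contributions, the $\pm h^*(\EE[\bx])$ terms cancel and one is left precisely with the expression obtained for the left-hand side, which proves the identity. There is essentially no obstacle in the argument; the only point worth flagging is a mild regularity caveat, namely that one needs $\EE[h^*(\bx)]$ to be finite and $\EE[\bx]\in\interior\dom h^*$ for $\nabla h^*(\EE[\bx])$ to be defined, which is implicit in the statement via the convention that all quantities appearing are well defined. This is also why the identity is sometimes phrased with a conditional expectation, but that variant follows by the same computation applied pointwise on the conditioning $\sigma$-algebra.
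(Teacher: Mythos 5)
Your proof is correct: the paper itself does not prove this lemma but cites it from \citet{pfau2013generalized}, and your direct expansion of both sides via the definition of $D_{h^*}$, with the cross term $\langle \nabla h^*(\EE[\bx]), \bx-\EE[\bx]\rangle$ vanishing in expectation and the $h^*(\EE[\bx])$ terms cancelling, is exactly the standard argument behind that reference. Your regularity caveat ($\EE[h^*(\bx)]$ finite, $\EE[\bx], \bus \in \interior\dom h^*$ so the gradients exist) is also appropriate and matches how the lemma is used in the paper, where it is applied conditionally on $\Fk_k$ in the same pointwise manner you describe.
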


\subsection{Proofs of Section~\myref{sect:analysis}}
\label{app:proofsect3}

\vspace{2ex}
\noindent
\textbf{Proof of Theorem~\myref{theo:unifiedmainonlyconvex}.} From Lemma~\myref{prop:unified3} and since $B_k - \rho M_k \leq 0$ , we get
    \begin{align}
        (1/\alpha_{k})\left(1-\alpha_k(A_k + M_k\ck)\right)\EE[F(\bx_k) - F(\bxs_*)] &\leq \EE\left[\frac{1}{\alpha_{k}^2}  D_{h}(\bxs_*, \bx_{k}) + M_{k}\sigma_{k}^2\right]  \nonumber \\
        &\qquad - \EE\left[\frac{1}{\alpha_{k+1}^2}  D_{h}(\bxs_*, \bx_{k+1}) + M_{k+1}\sigma_{k+1}^2\right] \nonumber \\
        &\qquad + \EE [N_{k}] \nonumber .
    \end{align}
    That means that, by summing on $t=0, \dots, k-1$, we obtain
    \begin{align}
        \EE[F(\bbar{\bx}_k) - F(\bxs_*)] &\leq \frac{(1/\alpha_{0}^2) \EE[ D_{h}(\bxs_*, \bx_{0})] + M_{0}\EE[\sigma_{0}^2]}{\sum_{t=0}^{k-1} (1/\alpha_{t})\left(1-\alpha_t(A_t + M_t C_t)\right)}\nonumber \\
        &\qquad + \sum_{t=0}^{k-1}\frac{\EE[N_t]}{\sum_{t=0}^{k-1} (1/\alpha_{t})\left(1-\alpha_t(A_t + M_t C_t)\right)}, \nonumber
    \end{align}
    where $\displaystyle \bbar{\bx}_k = \sum_{t=0}^{k-1} \frac{(1/\alpha_{t})\left(1-\alpha_t(A_t + M_t C_t)\right)}{\sum_{t=0}^{k-1} (1/\alpha_{t})\left(1-\alpha_t(A_t + M_t C_t)\right)} \bx_t$. \hfill \qed 
    
\vspace{2ex}
\noindent
\textbf{Proof of Theorem~\myref{theo:strongconvex}.} From Lemma~\myref{prop:unified3} and relative strong convexity of $F$, we have
    \begin{align}
        \frac{1}{\alpha_{k+1}^2}  \EE[D_{h}(\bxs_*, \bx_{k+1})] + M_{k+1}\EE[\sigma_{k+1}^2] 
        &\leq \frac{1}{\alpha_{k}^2} \EE[D_{h}(\bxs_*, \bx_{k})] +  \EE[N_{k}]+ (M_k + B_k -\rho M_k)\EE[\sigma_k^2] \nonumber\\
        &\qquad -\frac{1}{\alpha_{k}}\left(1-\alpha_k(A_k + M_k\ck)\right)\EE[D_F(\bx_{k}, \bxs_*)] \nonumber \\
        &\leq \frac{1}{\alpha_{k}^2} \EE[D_{h}(\bxs_*, \bx_{k})] + M_k\left[1+\frac{B_k}{M_k}-\rho\right]\EE[\sigma_k^2] +  \EE[N_{k}] \nonumber \\
        &\qquad -\frac{\mu}{\alpha_{k}}\left(1-\alpha_k(A_k + M_k\ck)\right)\EE[D_h(\bx_k, \bxs_*)] \nonumber \\
        &\leq q_k \left(\frac{1}{\alpha_{k}^2} \EE[D_{h}(\bxs_*, \bx_{k})] + M_k\EE[\sigma_k^2]\right) + \EE[N_{k}] \nonumber, 
    \end{align}
    with $q_k = \max\left\{1 - \alpha_k\gamma_h\mu\left(1-\alpha_k (A_k+ M_k C_k)\right), 1+\frac{B_k}{M_k}-\rho\right\}$. \hfill \qed 
\subsection{Proofs of Section~\myref{sect:applications}}\label{app:proofsect4}
\noindent
\textbf{Proof of Theorem~\myref{theo:bsppa}.} From Proposition~\myref{prop:unified}, we have
        \begin{equation}\label{eq:20250219a}
            \alpha_k \EE[F(\bx_{k})- F(\bxs_*)] \leq \EE[D_{h}(\bxs_*, \bx_{k})] - \EE[D_{h}(\bxs_*, \bx_{k+1})] + \alpha_k^2 \sigma^2_{*}.
        \end{equation}
        Let $k \geq 1$. Summing from $0$ up to $k-1$ and dividing both side by $\sum_{t=0}^{k-1}\alpha_t$,~\eqref{eq:20250219a} gives
        \begin{align}
            \sum_{t=0}^{k-1}\frac{\alpha_t}{\sum_{t=0}^{k-1}\alpha_t}\EE[F(\bx_{t})- F(\bxs_*)] &\leq \frac{1}{\sum_{t=0}^{k-1}\alpha_t} \left(D_{h}(\bxs_*, \bx_{0})  - \EE[D_{h}(\bxs_*, \bx_{k})]\right) + \sigma^2_{*} \frac{\sum_{t=0}^{k-1}\alpha_t^2}{\sum_{t=0}^{k-1}\alpha_t} \nonumber \\
            &\leq \frac{D_{h}(\bxs_*, \bx_{0})}{\sum_{t=0}^{k-1}\alpha_t} + \sigma^2_{*} \frac{\sum_{t=0}^{k-1}\alpha_t^2}{\sum_{t=0}^{k-1}\alpha_t}. \nonumber
        \end{align}
        Using convexity and Jensen inequality, we finally obtain the result. \hfill \qed 
        
\vspace{2ex}
\noindent
\textbf{Proof of Theorem~\myref{theo:bsppa2}.} Proposition~\myref{prop:unified} gives
        \begin{align}
            \left(1+\beta\alpha_k\right) \EE[D_{h}(\bxs_*, \bx_{k+1})]  &\leq 
             \EE[D_{h}(\bxs_*, \bx_{k})] -\alpha_k\EE[F(\bx_{k})- F(\bxs_*)]  + \alpha_k^2 \sigma^2_{*} \nonumber \\
             &\leq \EE[D_{h}(\bxs_*, \bx_{k})] + \alpha_k^2 \sigma^2_{*} \nonumber.
        \end{align}
        So
        \begin{align}
            \EE[D_{h}(\bxs_*, \bx_{k+1})]  &\leq \frac{1}{1+\beta\alpha_k} \EE[D_{h}(\bxs_*, \bx_{k})] + \frac{\alpha_k^2}{1+\beta\alpha_k} \sigma^2_{*}. \nonumber
        \end{align}
        Let $\alpha_k = \alpha$ and $q = \frac{1}{1+\beta\alpha}$. Summing from $0$ up to $k-1$, we obtain
        \begin{align}
            \EE[D_{h}(\bxs_*, \bx_{k})]  &\leq q^k \EE[D_{h}(\bxs_*, \bx_{0})] + q^k \sigma^2_{*} \alpha^2 \sum_{t=0}^{k-1}q^{-t} \nonumber \\
            &= q^k \EE[D_{h}(\bxs_*, \bx_{0})] +  \sigma^2_{*} \alpha^2 \sum_{t=0}^{k-1}q^{k-t} \nonumber \\
            &\leq q^k \EE[D_{h}(\bxs_*, \bx_{0})] +  \alpha^2\frac{1}{1-q} \sigma^2_{*}, \nonumber 
        \end{align}
which gives the results. \hfill \qed 

\vspace{2ex}
\noindent
\textbf{Proof of Lemma~\myref{lem:20250311b}.} From Equation~\eqref{eq:10022025a}, we infer that $\nabla h(\bz_{k+1}) = \nabla h(\bx_{k}) - \alpha_k \nabla f_{i_k}(\bx_{k}) + \alpha_k \be_k$. Set $\nabla h(\bxs_1^+) = \nabla h(\bx_{k}) - 2\alpha_k [\nabla f_{i_k}(\bx_{k}) - \nabla f_{i_k}(\bxs_{*})]$ and $\nabla h(\bxs_2^+) = \nabla h(\bx_{k}) - 2\alpha_k (\nabla f_{i_k}(\bxs_{*}) - \be_k)$.
Then, from Lemma~\myref{lem:20250304a}, we know that $D_h(\bx_{k}, \bz_{k+1}) \leq (S_1 + S_2)/2$, where
    \begin{align*}
    S_1 &= D_{h^*}(\nabla h(\bx_{k}) - 2\alpha_k [\nabla f_{i_k}(\bx_{k}) - \nabla f_{i_k}(\bxs_{*})], \nabla h(\bx_{k})), \\
    S_2 &= D_{h^*}(\nabla h(\bx_{k}) - 2\alpha_k (\nabla f_{i_k}(\bxs_{*}) - \be_k), \nabla h(\bx_{k})).
    \end{align*}
    
    Using the gain function in Equation~\eqref{eq:gain}, then Lemma~\myref{lem:bregsmooth}, and finally Equation~\eqref{eq:gain2}, we have
    \begin{align*}
    \EE_k[S_1] &= \EE_k[D_{h^{*}}(\nabla h(\bx_{k}) - 2\alpha_k (\nabla f_{i_k}(\bx_{k}) - \nabla f_{i_k}(\bxs_{*})), \nabla h(\bx_{k}))] \\
    &\leq 4 L^2 \alpha_k^2 \EE_k \left[G \left(\nabla h(\bx_{k}), \nabla h(\bx_{k}), \frac{1}{L} (\nabla f_{i_k}(\bx_{k}) - \nabla f_{i_k}(\bxs_{*})) \right) \times \right. \\
    &\qquad \left. D_{h^{*}}\left(\nabla h(\bx_{k}) - \frac{1}{L} (\nabla f_{i_k}(\bx_{k}) - \nabla f_{i_k}(\bxs_{*})), \nabla h(\bx_{k}) \right) \right] \\
    &\leq 4 L \alpha_k^2 G_k\EE_k\left[D_{h^{*}}\left(\nabla h(\bx_{k}) - \frac{1}{L} (\nabla f_{i_k}(\bx_{k}) - \nabla f_{i_k}(\bxs_{*})), \nabla h(\bx_{k}) \right)\right] \\
    &\leq 4 L \alpha_k^2 G_k D_F(\bx_{k}, \bxs_{*}).
    \end{align*}
    We know that $- 2\alpha_k (\nabla f_{i_k}(\bxs_{*}) - \be_k) = \zeta_k - \EE_k[\zeta_k]$. From Lemma~\myref{lem:vardec}, Equation~\eqref{eq:gain}, and Equation~\eqref{eq:gain3}, it follows that
    \begin{align*}
    \EE_k[S_2] &= \EE_k[D_{h^*}(\nabla h(\bx_{k}) + \zeta_k - \EE_k[\zeta_k], \nabla h(\bx_{k}))] \\
    &= \EE_k[D_{h^*}(\nabla h(\bx_{k}) + \zeta_k - \EE_k[\zeta_k], \nabla h(\bx_{k}) - \EE_k[\zeta_k])] \\
    &\qquad - D_{h^*}(\nabla h(\bx_{k}), \nabla h(\bx_{k}) - \EE_k[\zeta_k]) \\
    &\leq 4 L^2 \alpha_k^2 \EE_k\Bigg[G \bigg(\nabla h(\bx_{k}) - 2\alpha_k \frac{1}{n} \sum_{i=1}^n \nabla f_i (\bphi_i^k), \nabla h(\bphi_{i_k}^k), \frac{1}{L} \left(\nabla f_{i_k}(\bphi_{i_k}^k) - \nabla f_{i_k}(\bxs_{*})\right) \bigg) \times \\
    &\qquad  D_{h^*} \bigg(\nabla h(\bphi_{i_k}^k) - \frac{1}{L} \left(\nabla f_{i_k}(\bphi_{i_k}^k) - \nabla f_{i_k}(\bxs_{*})\right), \nabla h(\bphi_{i_k}^k) \bigg)\Bigg] \\
    &\qquad - \EE_k[D_{h^*}(\nabla h(\bx_{k}), \nabla h(\bx_{k}) - \EE_k[\zeta_k])] \\
    &\leq 2 \alpha_k^2 G_k \sigma_k^2 - D_{h^*}(\nabla h(\bx_{k}), \nabla h(\bx_{k}) - \EE_k[\zeta_k]).
    \end{align*}
    Putting all together, we get Equation~\eqref{eq:varsapa}. For Equation~\eqref{eq:20221004c}, we have
\begin{align*}
    \sigma_{k+1}^2 &= 2L^2 \EE_{k+1} \left[ D_{h^*} \left(\nabla h\left(\bphi_{i_{k+1}}^{k+1}\right) - \frac{1}{L} \left(\nabla f_{i_{k+1}}\left(\bphi_{i_{k+1}}^{k+1}\right) - \nabla f_{i_{k+1}}(\bxs_{*})\right), \nabla h\left(\bphi_{i_{k+1}}^{k+1}\right) \right)\right] \\
    &= 2L^2 \frac{1}{ n} \sum_{i=1}^{n} D_{h^*} \left(\nabla h\left(\bphi_{i}^{k+1}\right) - \frac{1}{L} \left(\nabla f_{i}\left(\bphi_{i}^{k+1}\right) - \nabla f_{i}(\bxs_{*})\right), \nabla h\left(\bphi_{i}^{k+1}\right) \right) \\
    &= 2L^2 \frac{1}{ n} \sum_{i=1}^{n} D_{h^*} \bigg(\nabla h\left(\bphi^k_i + \delta_{i,i_k} (\bx_{k} - \bphi^k_i)\right) - \frac{1}{L} \left(\nabla f_{i}\left(\bphi^k_i + \delta_{i,i_k} (\bx_{k} - \bphi^k_i)\right) - \nabla f_{i}(\bxs_{*})\right),  \\
    &\qquad \qquad \qquad   \nabla h\left(\bphi^k_i + \delta_{i,i_k} (\bx_{k} - \bphi^k_i)\right) \bigg).
\end{align*}
That means
\begin{align*}
&\EE_{k}\left[\sigma^2_{k+1}\right] \\
&= 2L^2 \frac{1}{ n} \sum_{i=1}^{n} \EE_{k} \Bigg[ D_{h^*} \bigg(\nabla h\left(\bphi^k_i + \delta_{i,i_k} (\bx_{k} - \bphi^k_i)\right) - \frac{1}{L} \left(\nabla f_{i}\left(\bphi^k_i + \delta_{i,i_k} (\bx_{k} - \bphi^k_i)\right) - \nabla f_{i}(\bxs_{*})\right), \\
    &\qquad \qquad \qquad   \nabla h\left(\bphi^k_i + \delta_{i,i_k} (\bx_{k} - \bphi^k_i)\right) \bigg) \Bigg]\\
&= 2L^2 \frac{1}{ n} \sum_{i=1}^{n} \frac{1}{ n} \sum_{j=1}^{n}  D_{h^*} \bigg(\nabla h\left(\bphi^k_i + \delta_{i,j} (\bx_{k} - \bphi^k_i)\right) - \frac{1}{L} \left(\nabla f_{i}\left(\bphi^k_i + \delta_{i,j} (\bx_{k} - \bphi^k_i)\right) - \nabla f_{i}(\bxs_{*})\right), \\
    &\qquad \qquad \qquad  \nabla h\left(\bphi^k_i + \delta_{i,j} (\bx_{k} - \bphi^k_i)\right) \bigg) \\
&= 2L^2 \frac{1}{ n} \sum_{i=1}^{n} \left[ \frac{n-1}{ n}  D_{h^*} \left(\nabla h\left(\bphi^k_i\right) - \frac{1}{L} \left(\nabla f_{i}\left(\bphi^k_i\right) - \nabla f_{i}(\bxs_{*})\right), \nabla h\left(\bphi^k_i\right) \right) \right. \\
&\qquad \qquad \qquad \left. + \frac{1}{n} D_{h^*} \left(\nabla h\left(\bx_{k}\right) - \frac{1}{L} \left(\nabla f_{i}\left(\bx_{k}\right) - \nabla f_{i}(\bxs_{*})\right), \nabla h\left(\bx_{k}\right) \right)\right] \\
&= \left(1-\frac{1}{n}\right)2L^2 \frac{1}{ n} \sum_{i=1}^{n}  D_{h^*} \left(\nabla h\left(\bphi^k_i\right) - \frac{1}{L} \left(\nabla f_{i}\left(\bphi^k_i\right) - \nabla f_{i}(\bxs_{*})\right), \nabla h\left(\bphi^k_i\right) \right) \\
&\qquad \qquad \qquad + 2L^2 \frac{1}{n^2} \sum_{i=1}^{n} D_{h^*} \left(\nabla h\left(\bx_{k}\right) - \frac{1}{L} \left(\nabla f_{i}\left(\bx_{k}\right) - \nabla f_{i}(\bxs_{*})\right), \nabla h\left(\bx_{k}\right) \right) \\
&\leq  \left(1-\frac{1}{n}\right) \sigma^2_{k} + \frac{2L}{n} D_F(\bx_{k}, \bxs_{*}).
\end{align*}
The last inequality comes from Lemma~\myref{lem:bregsmooth}. \hfill \qed 
        
\vspace{2ex}
\noindent
\textbf{Proof of Lemma~\myref{lem:20250311a}.} From Lemma~\myref{lem:20250304a}, we know that $D_h(\bx_{k}, \bz_{k+1}) \leq (S_1 + S_2)/2$, where
    \begin{align*}
    S_1 &= D_{h^*}(\nabla h(\bx_{k}) - 2\alpha_k [\nabla f_{i_k}(\bx_{k}) - \nabla f_{i_k}(\bxs_{*})], \nabla h(\bx_{k})), \\
    S_2 &= D_{h^*}(\nabla h(\bx_{k}) - 2\alpha_k (\nabla f_{i_k}(\bxs_{*}) - \be_k), \nabla h(\bx_{k})).
    \end{align*}
    Using the gain function in Equation~\eqref{eq:gain}, then Lemma~\myref{lem:bregsmooth}, and finally Equation~\eqref{eq:gain4}, we have
    \begin{align*}
    \EE_k[S_1] &= \EE_k[D_{h^{*}}(\nabla h(\bx_{k}) - 2\alpha_k (\nabla f_{i_k}(\bx_{k}) - \nabla f_{i_k}(\bxs_{*})), \nabla h(\bx_{k}))] \\
    &\leq 4 L^2 \alpha_k^2 \EE_k \left[G \left(\nabla h(\bx_{k}), \nabla h(\bx_{k}), \frac{1}{L} (\nabla f_{i_k}(\bx_{k}) - \nabla f_{i_k}(\bxs_{*})) \right) \times \right. \\
    &\qquad \left. D_{h^{*}}\left(\nabla h(\bx_{k}) - \frac{1}{L} (\nabla f_{i_k}(\bx_{k}) - \nabla f_{i_k}(\bxs_{*})), \nabla h(\bx_{k}) \right) \right] \\
    &\leq 4 L \alpha_k^2 G_k\EE_k\left[D_{h^{*}}\left(\nabla h(\bx_{k}) - \frac{1}{L} (\nabla f_{i_k}(\bx_{k}) - \nabla f_{i_k}(\bxs_{*})), \nabla h(\bx_{k}) \right)\right] \\
    &\leq 4 L \alpha_k^2 G_k D_F(\bx_{k}, \bxs_{*}).
    \end{align*}
    We know that $- 2\alpha_k (\nabla f_{i_k}(\bxs_{*}) - \be_k) = \zeta_k - \EE_k[\zeta_k]$. From Lemma~\myref{lem:vardec}, Equation~\eqref{eq:gain}, and Equation~\eqref{eq:gain5}, it follows that
    \begin{align*}
    \EE_k[S_2] &= \EE_k[D_{h^*}(\nabla h(\bx_{k}) + \zeta_k - \EE_k[\zeta_k], \nabla h(\bx_{k}))] \\
    &= \EE_k[D_{h^*}(\nabla h(\bx_{k}) + \zeta_k - \EE_k[\zeta_k], \nabla h(\bx_{k}) - \EE_k[\zeta_k])] \\
    &\qquad - D_{h^*}(\nabla h(\bx_{k}), \nabla h(\bx_{k}) - \EE_k[\zeta_k]) \\
    &\leq 4 L^2 \alpha_k^2 \EE_k\left[G \left(\nabla h(\bx_{k}) - 2\alpha_k \nabla F (\bu_{k}), \nabla h(\bu_{k}), \frac{1}{L} \left(\nabla f_{i_k}(\bu_{k}) - \nabla f_{i_k}(\bxs_{*})\right) \right) \times \right. \\
    &\qquad \left. D_{h^*} \left(\nabla h(\bu_k) - \frac{1}{L} \left(\nabla f_{i_k}(\bu_{k}) - \nabla f_{i_k}(\bxs_{*})\right), \nabla h(\bu_{k}) \right)\right] \\
    &\qquad - \EE_k[D_{h^*}(\nabla h(\bx_{k}), \nabla h(\bx_{k}) - \EE_k[\zeta_k])] \\
    &\leq 2 \alpha_k^2 G_k \sigma_k^2 - D_{h^*}(\nabla h(\bx_{k}), \nabla h(\bx_{k}) - \EE_k[\zeta_k]).
    \end{align*}
    Putting all together, we get Equation~\eqref{eq:varlsvrp}. For Equation~\eqref{eq:20221004c}, we have
\begin{align*}
    \sigma_{k+1}^2 &=  2 L^2 \EE_{k+1} \Bigg[ D_{h^*} \Big(\nabla h(\bu_{k+1}) - \frac{1}{L} (\nabla f_{i_{k+1}}(\bu_{k+1}) - \nabla f_{i_{k+1}}(\bxs_{*})), \nabla h(\bu_{k+1}) \Big)\Bigg],\\
    &=2L^2 \EE_{k+1} \Bigg[ D_{h^*} \Big(\nabla h\left((1-\varepsilon_k)\bu_k + \varepsilon_k \bx_k\right) - \frac{1}{L} \left(\nabla f_{i_{k+1}}\left((1-\varepsilon_k)\bu_k + \varepsilon_k \bx_k\right) - \nabla f_{i_{k+1}}(\bxs_{*})\right), \\
    &\qquad \qquad \qquad    \nabla h\left((1-\varepsilon_k)\bu_k + \varepsilon_k \bx_k\right) \Big)\Bigg] \\
    &= 2L^2 \frac{1}{ n} \sum_{i=1}^{n} D_{h^*} \Big(\nabla h\left((1-\varepsilon_k)\bu_k + \varepsilon_k \bx_k\right) - \frac{1}{L} \left(\nabla f_{i}\left((1-\varepsilon_k)\bu_k + \varepsilon_k \bx_k\right) - \nabla f_{i}(\bxs_{*})\right), \\
    &\qquad \qquad \qquad \qquad   \nabla h\left((1-\varepsilon_k)\bu_k + \varepsilon_k \bx_k\right) \Big).
\end{align*}
That means
\begin{align*}
\EE_{k}\left[\sigma^2_{k+1}\right] &= 2L^2 \frac{1}{ n} \sum_{i=1}^{n} \EE_{k} \Bigg[D_{h^*} \Big(\nabla h\left((1-\varepsilon_k)\bu_k + \varepsilon_k \bx_k\right) - \frac{1}{L} \left(\nabla f_{i}\left((1-\varepsilon_k)\bu_k + \varepsilon_k \bx_k\right) - \nabla f_{i}(\bxs_{*})\right), \\
    &\qquad \qquad \qquad \qquad   \nabla h\left((1-\varepsilon_k)\bu_k + \varepsilon_k \bx_k\right) \Big)\Bigg] \\
& = (1-p) \sigma^2_{k} + 2 p L^2\frac{1}{ n} \sum_{i=1}^{n}D_{h^*} \left(\nabla h(\bx_k) - \frac{1}{L} \left(\nabla f_{i}( \bx_k) - \nabla f_{i}(\bxs_{*})\right), \nabla h( \bx_k) \right) \\
& \leq (1-p) \sigma^2_{k} + 2 p L D_F(\bx_{k}, \bxs_{*}).
\end{align*}
The last inequality comes from Lemma~\myref{lem:bregsmooth}. \hfill \qed 

\vspace{2ex}
\noindent
\textbf{Proof of Lemma~\myref{lem:20220927a}.} Thanks to Lemma~\myref{lem:20250304a}, we know that $D_h(\bx_{k}, \bz_{k+1}) \leq (S_1 + S_2)/2$, where
    \begin{align*}
    S_1 &= D_{h^*}(\nabla h(\bx_{k}) - 2\alpha_k [\nabla f_{i_k}(\bx_{k}) - \nabla f_{i_k}(\bxs_{*})], \nabla h(\bx_{k})), \\
    S_2 &= D_{h^*}(\nabla h(\bx_{k}) - 2\alpha_k (\nabla f_{i_k}(\bxs_{*}) - \be_{k}), \nabla h(\bx_{k})).
    \end{align*}
     Using the gain function in Equation~\eqref{eq:gain}, then Lemma~\myref{lem:bregsmooth}, and finally Equation~\eqref{gain6}, we have
    \begin{align*}
    \EE[S_1\,\vert\, \Fk_{s,k}] &= \EE[D_{h^{*}}(\nabla h(\bx_{k}) - 2\alpha_k (\nabla f_{i_k}(\bx_{k}) - \nabla f_{i_k}(\bxs_{*})), \nabla h(\bx_{k}))\,\vert\, \Fk_{s,k}] \\
    &\leq 4 L^2 \alpha_k^2 \EE \left[G \left(\nabla h(\bx_{k}), \nabla h(\bx_{k}), \frac{1}{L} (\nabla f_{i_k}(\bx_{k}) - \nabla f_{i_k}(\bxs_{*})) \right) \times \right. \\
    &\qquad \left. D_{h^{*}}\left(\nabla h(\bx_{k}) - \frac{1}{L} (\nabla f_{i_k}(\bx_{k}) - \nabla f_{i_k}(\bxs_{*})), \nabla h(\bx_{k}) \right) \,\vert\, \Fk_{s,k} \right] \\
    &\leq 4 L \alpha_k^2G_{sm + k} \EE \left[D_{h^{*}}\left(\nabla h(\bx_{k}) - \frac{1}{L} (\nabla f_{i_k}(\bx_{k}) - \nabla f_{i_k}(\bxs_{*})), \nabla h(\bx_{k}) \right) \,\vert\, \Fk_{s,k} \right] \\
    &\leq 4 L \alpha_k^2 G_{sm + k} D_F(\bx_{k}, \bxs_{*}).
    \end{align*}
    We know that $- 2\alpha_k (\nabla f_{i_k}(\bxs_{*}) - \be_k) = \zeta_k - \EE[\zeta_k\,\vert\, \Fk_{s,k}]$. From Lemma~\myref{lem:vardec}, Equation~\eqref{eq:gain}, and Equation~\eqref{gain7}, it follows that
    \begin{align*}
    \EE[S_2\,\vert\, \Fk_{s,k}] &= \EE[D_{h^*}(\nabla h(\bx_{k}) + \zeta_k - \EE[\zeta_k\,\vert\, \Fk_{s,k}], \nabla h(\bx_{k}))\,\vert\, \Fk_{s,k}] \\
    &= \EE[D_{h^*}(\nabla h(\bx_{k}) + \zeta_k - \EE[\zeta_k\,\vert\, \Fk_{s,k}], \nabla h(\bx_{k}) - \EE[\zeta_k\,\vert\, \Fk_{s,k}])\,\vert\, \Fk_{s,k}] \\
    &\qquad - D_{h^*}(\nabla h(\bx_{k}), \nabla h(\bx_{k}) - \EE[\zeta_k\,\vert\, \Fk_{s,k}]) \\
    &\leq 4 L^2 \alpha_k^2 \EE \left[G \left(\nabla h(\bx_{k}) - 2\alpha_k \nabla F(\tilde{\bx}_{s}), \nabla h(\tilde{\bx}_{s}), \frac{1}{L} (\nabla f_{i_k}(\tilde{\bx}_{s}) - \nabla f_{i_k}(\bxs_{*})) \right) \times \right. \\
    &\qquad \left. D_{h^*} \left(\nabla h(\tilde{\bx}_{s}) - \frac{1}{L} (\nabla f_{i_k}(\tilde{\bx}_{s}) - \nabla f_{i_k}(\bxs_{*})), \nabla h(\tilde{\bx}_{s}) \right) \,\vert\, \Fk_{s,k} \right] \\
    &\qquad - D_{h^*}(\nabla h(\bx_{k}), \nabla h(\bx_{k}) - \EE[\zeta_k\,\vert\, \Fk_{s,k}]) \\
    &\leq 2 \alpha_k^2 G_{sm + k} \sigma_k^2 - D_{h^*}(\nabla h(\bx_{k}), \nabla h(\bx_{k}) - \EE[\zeta_k\,\vert\, \Fk_{s,k}]). 
\end{align*}
Putting all together give the results. \hfill \qed 

\vspace{2ex}
\noindent
\textbf{Proof of Theorem~\myref{theo:bsvrp}.} Proposition~\myref{prop:unified} and Lemma~\myref{lem:vardec} give 
    \begin{align}
         \EE[D_{h}(\bxs_*, \bx_{k+1})\,\vert\, \Fk_{s,k}]  &\leq 
         D_{h}(\bxs_*, \bx_{k}) + \alpha_k^2 B_k \sigma_k^2 \nonumber \\
         &\qquad -\alpha_k\left[1-\alpha_k A_k\right]\EE[F(\bx_{k})- F(\bxs_*)\,\vert\, \Fk_{s,k}]\nonumber \\
         &= D_{h}(\bxs_*, \bx_{k}) -\alpha_k\left[1-\alpha_k A_k\right]\EE[F(\bx_{k})- F(\bxs_*)] \nonumber \\
         &\qquad + 2 L^2 \alpha_k^2 B_k \EE\left[D_{h^*} \left(\nabla h(\tilde{\bx}_{s}) - \frac{1}{L} (\nabla f_{i_k}(\tilde{\bx}_{s}) - \nabla f_{i_k}(\bxs_{*})), \nabla h(\tilde{\bx}_{s}) \right) \,\vert\, \Fk_{s,k} \right] \nonumber \\
         &\leq D_{h}(\bxs_*, \bx_{k}) -\alpha_k\left[1-\alpha_k A_k\right]\EE[F(\bx_{k})- F(\bxs_*)] \nonumber \\
         &\qquad + 2 L \alpha_k^2 B_k D_F(\tilde{\bx}_{s}, \bxs_{*}).
    \end{align}
    In the last inequality, we used Lemma~\myref{lem:bregsmooth}. By taking total expectation, it follows
    \begin{align}
         \EE[D_{h}(\bxs_*, \bx_{k+1})]  &\leq 
         \EE[D_{h}(\bxs_*, \bx_{k})] + 2\alpha_k^2 B_k L \EE[D_F(\tilde{\bx}_{s}, \bxs_{*})] \nonumber \\
         &\qquad -\alpha_k\left[1-\alpha_k A_k\right]\EE[F(\bx_{k})- F(\bxs_*)]\nonumber.%
    \end{align}
    Then by summing over $k=0, \ldots, m-1$ and recalling Remark~\myref{rmk:constsvrg}, we obtain
    \begin{align}
         \EE[D_{h}(\bxs_*, \bx^{m})] + \alpha\left[1-\alpha A \right]m\EE[D_F(\tilde{\bx}_{s+1}, \bxs_{*})] &\leq 
         \EE[D_{h}(\bxs_*, \bx_{0})] + 2\alpha^2 B L m \EE[D_F(\tilde{\bx}_{s}, \bxs_{*})]\nonumber \\ 
         &= \EE[D_{h}(\bxs_*, \tilde{\bx}_{s})] + 2\alpha^2 B L m \EE[D_F(\tilde{\bx}_{s}, \bxs_{*})]\nonumber \\ 
         &\leq \frac{1}{\mu}\EE[D_{F}(\bxs_*, \tilde{\bx}_{s})] + 2\alpha^2 B L m\EE[D_F(\tilde{\bx}_{s}, \bxs_{*})] \nonumber \\ 
         &\leq \frac{1}{\gamma_h\mu}\EE[D_{F}(\tilde{\bx}_{s}, \bxs_*)] + 2\alpha^2 B L m\EE[D_F(\tilde{\bx}_{s}, \bxs_{*})]\nonumber \\ 
         &= \left(\frac{1}{\gamma_h\mu} + 2\alpha^2 B L m \right)\EE[D_{F}(\tilde{\bx}_{s}, \bxs_*)]\nonumber.
    \end{align}
    In the first inequality, we used the fact that
\begin{equation*}
\sum_{k = 0}^{m-1} F(\bx_k) = m \sum_{k = 0}^{m-1} \frac{1}{m} F(\bx_k) = m \sum_{\xi = 0}^{m-1} \frac{1}{m} F\left({\textstyle\sum_{k=0}^{m-1} \delta_{k,\xi}\bx_k}\right) = m \EE[F(\tilde{\bx}_{s+1}) \,\vert\, \Fk_{s,m-1}].
\end{equation*}
It still holds by choosing $\tilde{\bx}_{s+1} = \sum_{k = 0}^{m-1} \frac{1}{m} \bx_k$ , in Algorithm~\myref{algo:srvprox2}, and using Jensen inequality to lower bound $\sum_{k = 0}^{m-1} F(\bx_k)$ by $m F(\tilde{\bx}_{s+1})$. 
We finally have
\begin{align}
    \EE[D_F(\tilde{\bx}_{s+1}, \bxs_{*})] &\leq \left(\alpha\left(1-\alpha A\right)m\right)^{-1} \left(\frac{1}{\gamma_h\mu} + 2\alpha^2 B L m\right)\EE[D_{F}(\tilde{\bx}_{s}, \bxs_*)] \nonumber \\ 
    &= \left(\frac{1}{\gamma_h\mu\alpha\left(1-\alpha A\right)m} + \frac{2\alpha B L }{1-\alpha A}\right)\EE[D_{F}(\tilde{\bx}_{s}, \bxs_*)].\nonumber 
\end{align}
Replacing the constants by their values gives the result. \hfill \qed
\clearpage
\section{{Full results of the numerical experiments}}\label{app:figures}

\begin{figure}[ht]
     \centering
     \begin{subfigure}[b]{0.32\textwidth}
         \centering
         \includegraphics[width=\textwidth]
         {poisson_0.5.png}
     \end{subfigure}
     \begin{subfigure}[b]{0.32\textwidth}
         \centering
         \includegraphics[width=\textwidth]
         {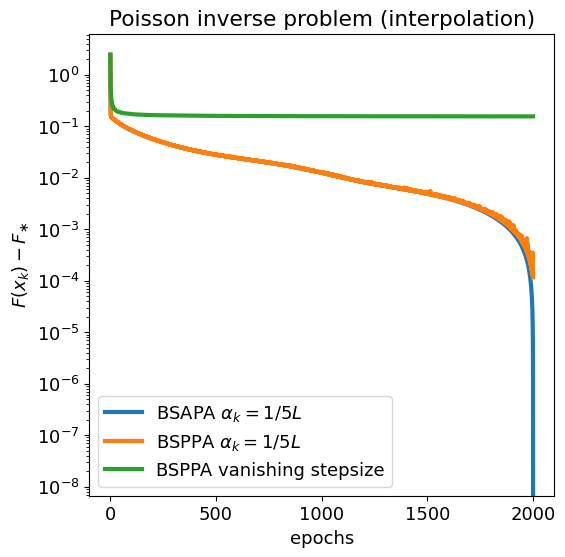}
     \end{subfigure}
     \begin{subfigure}[b]{0.32\textwidth}
         \centering
         \includegraphics[width=\textwidth]
         {poisson_5.png}
     \end{subfigure}
    \begin{subfigure}[b]{0.32\textwidth}
         \centering
         \includegraphics[width=\textwidth]
         {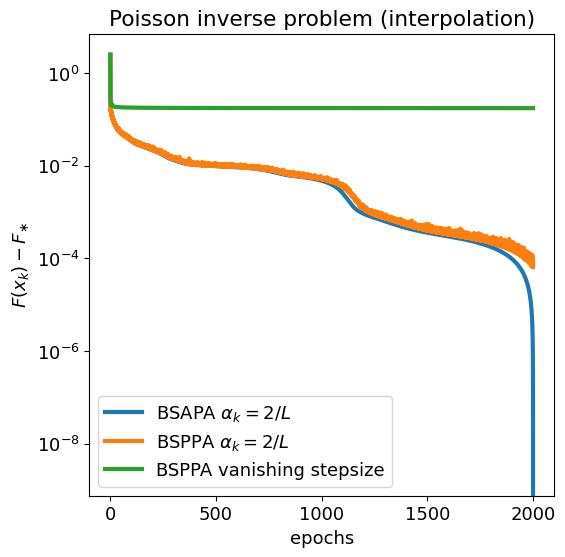}
     \end{subfigure}
    \begin{subfigure}[b]{0.32\textwidth}
         \centering
         \includegraphics[width=\textwidth]
         {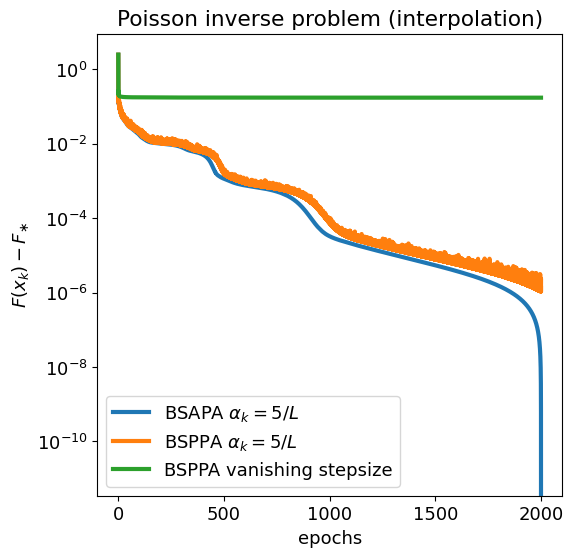}
     \end{subfigure}
    \caption{Poisson linear inverse problem \emph{(interpolation case)} with different stepsizes.
    }
    \label{fig:linear}
\end{figure}
    \begin{figure}[ht]
     \centering
     \begin{subfigure}[b]{0.32\textwidth}
         \centering
         \includegraphics[width=\textwidth]
         {tomo_0.5_5L.png}
     \end{subfigure}
     \begin{subfigure}[b]{0.32\textwidth}
         \centering
         \includegraphics[width=\textwidth]
         {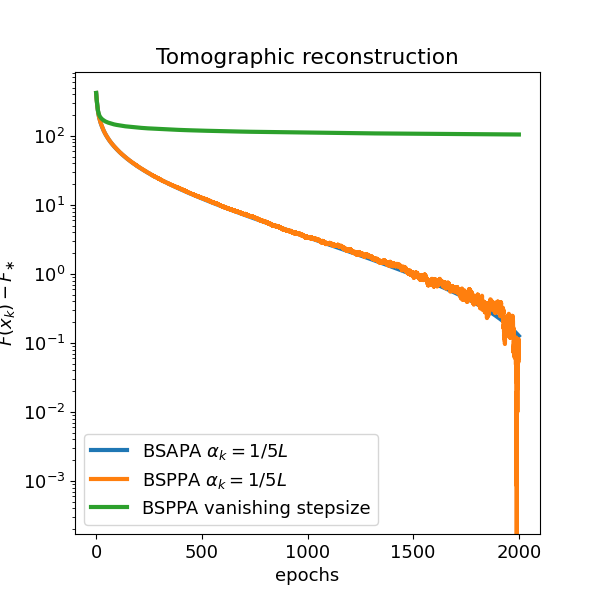}
     \end{subfigure}
     \begin{subfigure}[b]{0.32\textwidth}
         \centering
         \includegraphics[width=\textwidth]
         {tomo_5_5L.png}
     \end{subfigure}
    \begin{subfigure}[b]{0.32\textwidth}
         \centering
         \includegraphics[width=\textwidth]
         {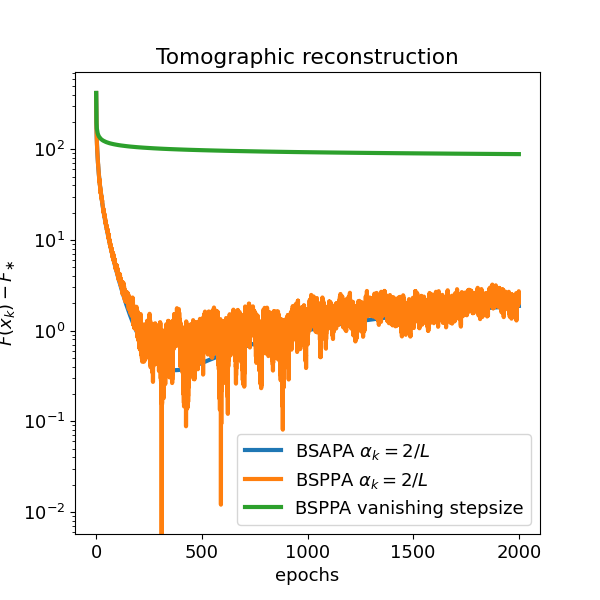}
     \end{subfigure}
    \begin{subfigure}[b]{0.32\textwidth}
         \centering
         \includegraphics[width=\textwidth]
         {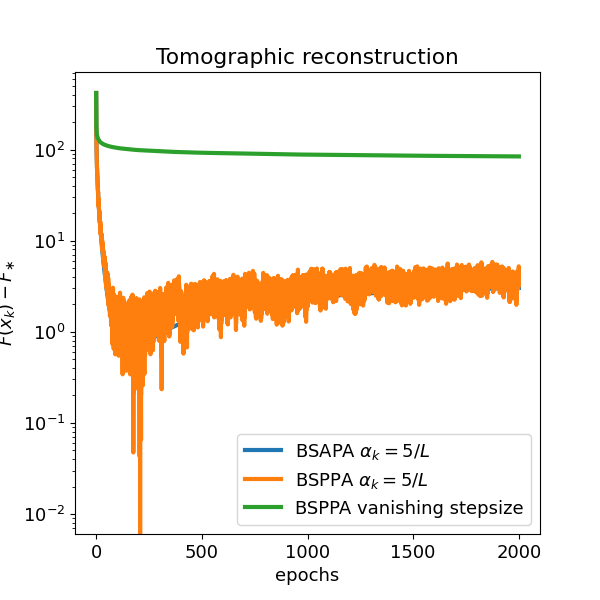}
     \end{subfigure}
    \caption{Tomographic reconstruction with different stepsizes.
    }
    \label{fig:tomo2}
\end{figure}
\begin{figure}[ht]
\centering
     \begin{subfigure}[b]{0.32\textwidth}
         \centering
         \includegraphics[width=\textwidth]
         {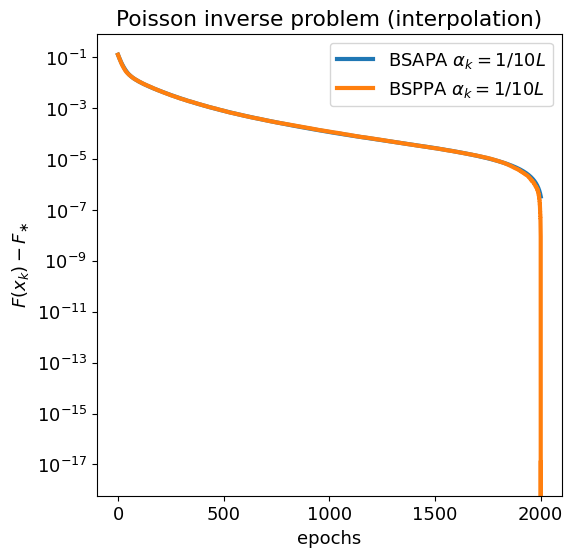}
     \end{subfigure}
     \begin{subfigure}[b]{0.32\textwidth}
         \centering
         \includegraphics[width=\textwidth]
         {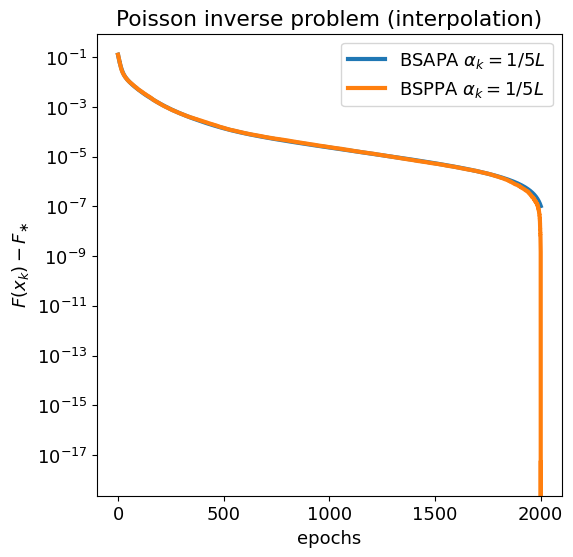}
     \end{subfigure}
    \begin{subfigure}[b]{0.32\textwidth}
         \centering
         \includegraphics[width=\textwidth]
         {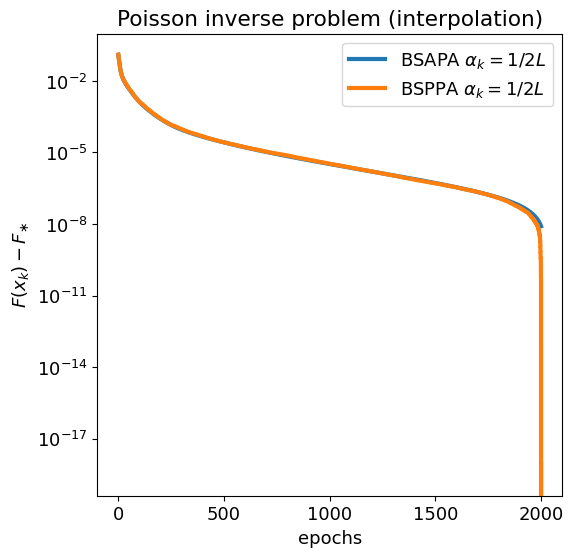}
    \end{subfigure}
    \caption{
    The case of diagonal $A$: closed-form solution to the proximal mapping. There is no instability for BSPPA in that case, showing that its instability in Figure~\myref{fig:tomo} is due to the inexact proximal mapping. This proves it is less stable to inexact algorithms compare to its variance-reduced counterparts.
    }
    \label{fig:tomodiag}
\end{figure}
\clearpage
\bibliography{paper}
\bibliographystyle{plainnat}
\end{document}